\documentclass[10pt]{article}

\usepackage{etoolbox}
\usepackage{comment}
\newtoggle{colt}
\togglefalse{colt}
\newtoggle{icml}
\togglefalse{icml}
\newcommand{\colt}[1]{\iftoggle{colt}{#1}{}}
\newcommand{\arxiv}[1]{\iftoggle{colt}{}{#1}}

\newcommand{\icml}[1]{\iftoggle{icml}{#1}{}}

\colt{\usepackage{times}}

\arxiv{
\usepackage[letterpaper, left=1in, right=1in, top=1in,bottom=1in]{geometry}
  \usepackage{parskip}
  \usepackage[colorlinks=true, linkcolor=blue!70!black, citecolor=blue!70!black,urlcolor=black,breaklinks=true]{hyperref}
  \usepackage[dvipsnames]{xcolor}
}

\icml{
  \usepackage{parskip}
    \usepackage[colorlinks=true, linkcolor=blue!70!black, citecolor=blue!70!black,urlcolor=black,breaklinks=true, bookmarks=true]{hyperref}
}

\PassOptionsToPackage{hypertexnames=false}{hyperref}  %

\usepackage{amsmath}
\usepackage{microtype}
\usepackage{hhline}

\arxiv{
\usepackage{amsthm}
}

\usepackage{bbm}
\usepackage{amsfonts}
\usepackage{amssymb}
\usepackage[nameinlink,capitalize]{cleveref}

\usepackage{mathtools}

\usepackage{xargs}

\arxiv{
\usepackage{algorithm}

\usepackage{natbib}
\bibliographystyle{plainnat}
\bibpunct{(}{)}{;}{a}{,}{,}
}

\usepackage{xpatch}

\arxiv{
\theoremstyle{plain}
\newtheorem{theorem}{Theorem}[section]
\newtheorem{proposition}[theorem]{Proposition}
\newtheorem{lemma}[theorem]{Lemma}
\newtheorem{corollary}[theorem]{Corollary}
\newtheorem{remark}{Remark}

\newtheorem{definition}{Definition}
\newtheorem{assumption}{Assumption}

\theoremstyle{definition}
\newtheorem{example}{Example}

}
\colt{
\newcommand{\qed}{\hfill\ensuremath{\Box}}
\newenvironment{proof}{
\textbf{Proof.} 
}{
\qed
}

\usepackage{thmtools}

\newtheorem{theorem}{Theorem}[section]
\newtheorem{lemma}[theorem]{Lemma}
\newtheorem{definition}[theorem]{Definition}
\newtheorem{corollary}[theorem]{Corollary}
\newtheorem{proposition}[theorem]{Proposition}

\newtheorem{remark}[theorem]{Remark}
\newtheorem{example}[theorem]{Example}

\newtheorem{assumption}[theorem]{Assumption}

}

\newcommand{\pfref}[1]{Proof of \cref{#1}}

\renewcommand{\eqref}[1]{\texorpdfstring{\hyperref[#1]{(\ref*{#1})}}{(\ref*{#1})}}

\crefformat{equation}{#2Eq.\,(#1)#3}
\Crefformat{equation}{#2Eq.\,(#1)#3}
\Crefformat{figure}{#2Figure~#1#3}
\Crefformat{assumption}{#2Assumption~#1#3}
\Crefname{assumption}{Assumption}{Assumptions}
\crefname{fact}{Fact}{Facts}
\Crefformat{figure}{#2Figure #1#3}
\Crefformat{assumption}{#2Assumption #1#3}
\crefname{lemma}{Lemma}{Lemmas}
\Crefname{lemma}{Lemma}{Lemmas}

\usepackage{crossreftools}
\arxiv{
\makeatletter
\renewenvironment{proof}[1][Proof]%
{%
  \par\noindent{\bfseries\upshape {#1.}\ }%
}%
{\qed\newline}
\makeatother
}
\colt{
\makeatletter
\renewenvironment{proof}[1][Proof]%
{%
  \par\noindent{\bfseries\upshape {#1.}\ }%
}%
{\jmlrQED}
\makeatother
}

\xpatchcmd{\proof}{\itshape}{\normalfont\proofnameformat}{}{}
\newcommand{\proofnameformat}{\bfseries}

\usepackage[most]{tcolorbox}

\tcbset {
  base/.style={
    arc=0mm, 
    bottomtitle=0.5mm,
    boxrule=0mm,
    colbacktitle=!10!white, 
    coltitle=black, 
    fonttitle=\bfseries, 
    left=2.5mm,
    leftrule=1mm,
    right=3.5mm,
    title={#1},
    toptitle=0.75mm, 
  }
}

\newtcolorbox{mainbox}[1]{
  colframe=blue!10!black,
  colbacktitle=blue!50!black!30!white,
  colback=blue!2!white,
  enhanced,
  fonttitle=\bfseries,
  attach boxed title to top left={yshift=-2.5mm},
  boxed title style={size=small,colframe=blue!40!black,colback=blue!40!black},
  title={\small\textcolor{white}{\textsc{#1}}}
}

\newtcolorbox{minbox}[1]{
  colframe=blue!10!black,
  colbacktitle=blue!50!black!30!white,
  colback=blue!2!white,
  enhanced,
  fonttitle=\bfseries,
}

\DeclarePairedDelimiter{\abs}{\lvert}{\rvert} %
\DeclarePairedDelimiter{\brk}{[}{]}
\DeclarePairedDelimiter{\crl}{\{}{\}}
\DeclarePairedDelimiter{\prn}{(}{)}
\DeclarePairedDelimiter{\nrm}{\|}{\|}
\DeclarePairedDelimiter{\tri}{\langle}{\rangle}

\DeclarePairedDelimiter{\floor}{\lfloor}{\rfloor}

\def\ddefloop#1{\ifx\ddefloop#1\else\ddef{#1}\expandafter\ddefloop\fi}
\def\ddef#1{\expandafter\def\csname bb#1\endcsname{\ensuremath{\mathbb{#1}}}}
\ddefloop ABCDEFGHIJKLMNOPQRSTUVWXYZ\ddefloop
\def\ddefloop#1{\ifx\ddefloop#1\else\ddef{#1}\expandafter\ddefloop\fi}
\def\ddef#1{\expandafter\def\csname b#1\endcsname{\ensuremath{\mathbf{#1}}}}
\ddefloop ABCDEFGHIJKLMNOPQRSTUVWXYZ\ddefloop
\def\ddef#1{\expandafter\def\csname sf#1\endcsname{\ensuremath{\mathsf{#1}}}}
\ddefloop ABCDEFGHIJKLMNOPQRSTUVWXYZ\ddefloop
\def\ddef#1{\expandafter\def\csname c#1\endcsname{\ensuremath{\mathcal{#1}}}}
\ddefloop ABCDEFGHIJKLMNOPQRSTUVWXYZ\ddefloop
\def\ddef#1{\expandafter\def\csname h#1\endcsname{\ensuremath{\widehat{#1}}}}
\ddefloop ABCDEFGHIJKLMNOPQRSTUVWXYZ\ddefloop
\def\ddef#1{\expandafter\def\csname hc#1\endcsname{\ensuremath{\widehat{\mathcal{#1}}}}}
\ddefloop ABCDEFGHIJKLMNOPQRSTUVWXYZ\ddefloop
\def\ddef#1{\expandafter\def\csname t#1\endcsname{\ensuremath{\widetilde{#1}}}}
\ddefloop ABCDEFGHIJKLMNOPQRSTUVWXYZ\ddefloop
\def\ddef#1{\expandafter\def\csname tc#1\endcsname{\ensuremath{\widetilde{\mathcal{#1}}}}}
\ddefloop ABCDEFGHIJKLMNOPQRSTUVWXYZ\ddefloop
\def\ddef#1{\expandafter\def\csname #1#1\endcsname{\ensuremath{\mathbb{#1}}}}
\ddefloop ABCDEFGHIJKLMNOPQRSTUVWXYZ\ddefloop
\def\ddef#1{\expandafter\def\csname #1\endcsname{\ensuremath{\mathbb{#1}}}}
\ddefloop ABCDEFGHIJKLMNOPQRSTUVWXYZ\ddefloop
\def\ddef#1{\expandafter\def\csname D#1\endcsname{\ensuremath{\Delta(\mathcal{#1})}}}
\ddefloop ABCDEFGHIJKLMNOPQRSTUVWXYZ\ddefloop
\def\ddefloop#1{\ifx\ddefloop#1\else\ddef{#1}\expandafter\ddefloop\fi}
\def\ddef#1{\expandafter\def\csname scr#1\endcsname{\ensuremath{\mathscr{#1}}}}
\ddefloop ABCDEFGHIJKLMNOPQRSTUVWXYZ\ddefloop

\let\Pr\undefined

\DeclareMathOperator{\En}{\mathbb{E}}

\DeclareMathOperator{\Pr}{Pr}

\newcommand{\wt}[1]{\widetilde{#1}}
\newcommand{\wh}[1]{\widehat{#1}}
\newcommand{\wb}[1]{\widebar{#1}}

\newcommand{\Dkl}[2]{D_{\mathsf{KL}}\prn*{#1\,\|\,#2}}

\newcommand{\Dchis}[2]{D_{\chi^2}\prn*{#1\dmid{}#2}}

\newcommand{\Dtv}[2]{D_{\mathsf{TV}}\prn*{#1,#2}}

\newcommand{\Dren}[3][\lambda]{D_{#1}\prn*{#2\,\|\,#3}}

\newcommand{\eps}{\epsilon}

\newcommand{\ldef}{\vcentcolon=}

\newcommand{\pclip}[2][\B]{\mathsf{Clip}_{#1}(#2)}
\newcommand{\trunc}[2][\B]{\tau_{#1}(#2)}

\newcommandx{\gam}[3][1=x,2=z]{\gamma_{#2,#3}(#1)}
\newcommandx{\gamp}[3][1=x,2=z]{\dot\gamma_{#2,#3}(#1)}

\newcommandx{\gamz}[4][1=x,2=z,3=\lr,4=\xz]{\gamma_{#2,#3,#4}(#1)}
\newcommandx{\gamzp}[4][1=x,2=z,3=\lr,4=\xz]{\dot\gamma_{#2,#3,#4}(#1)}

\newcommand{\xz}{x_0}
\newcommand{\const}{\mathrm{const}}
\newcommand{\lr}{r}

\renewcommand{\B}{B}

\newcommand{\xstar}{x^\star}
\newcommand{\xhat}{\wh{x}}

\newcommand{\CLSI}{C_{\mathsf{LSI}}}
\newcommand{\CPI}{C_{\mathsf{PI}}}

\newcommand{\rgo}{\nu}

\newcommand{\Neta}{\normal{0,\eta\Id}}

\newcommand{\Poi}{\mathsf{Poisson}}

\newcommand{\Nsigma}{\normal{0,\sigma^2\Id}}

\newcommand{\alr}{a_{\lr}}
\newcommand{\blr}{b_{\lr}}
\newcommand{\alrp}{a_{\lr}'}
\newcommand{\blrp}{b_{\lr}'}

\newcommand{\nuhat}{\wh{\nu}}

\newcommand{\mmt}{\mathfrak{m}}

\newcommand{\epsprox}{\varepsilon_{\mathsf{prox}}}

\newcommand{\rep}[1]{^{(#1)}}
\newcommand{\Oprox}[2][\eta]{\mathsf{O}_{\mathsf{prox},\eta}(#2)}
\newcommand{\Ograd}[1]{\mathsf{O}_{\mathsf{grad}}(#1)}
\newcommand{\Oeval}[1]{\mathsf{O}_{\mathsf{eval}}(#1)}
\newcommand{\Ogradn}[1]{\mathsf{O}_{\mathsf{grad}}\rep{n}(#1)}
\newcommand{\Oevaln}[1]{\mathsf{O}_{\mathsf{eval}}\rep{n}(#1)}

\newcommand{\gvar}{\sigma_{\sf g}}

\newcommand{\leqsim}{\approxleq}

\newcommand{\Unif}{\mathsf{Unif}}

\newcommand{\var}{\mathrm{Var}}
\newcommand{\Var}{\var}

\newcommand{\approxleq}{\lesssim}

\newcommand{\Id}{I}

\newcommand{\indic}{\mathbb{I}}

\renewcommand{\Pr}{\bbP}

\newcommand{\poly}{\mathrm{poly}}
\newcommand{\polylog}{\mathrm{polylog}}

\newcommand{\Ber}{\mathsf{Ber}}

\newcommand{\dmid}{\;\|\;}

\newcommand{\unif}{\mathsf{Unif}}

\newcommand{\deq}{\coloneqq}

\newcommand{\muhat}{\widehat{\mu}}

\def\multiset#1#2{\ensuremath{\left(\kern-.3em\left(\genfrac{}{}{0pt}{}{#1}{#2}\right)\kern-.3em\right)}}

\newcommand{\norm}[1]{\left \lVert #1 \right \rVert}

\newcommand{\Alg}{\texttt{Alg}}

\input{widebar}

\usepackage[utf8]{inputenc} %
\usepackage[T1]{fontenc}    %
\usepackage{url}            %
\usepackage{booktabs}       %
\usepackage{amsfonts}       %
\usepackage{nicefrac}       %
\usepackage{microtype}      %
\usepackage{makecell}
\usepackage{enumitem}
\usepackage{breakcites}
\usepackage{mathrsfs}

\usepackage[normalem]{ulem}

\usepackage{algorithm}
\usepackage{verbatim}
\usepackage{algorithmic}

\icml{

}

\newcommand{\jmlrQED}{\qed}

\usepackage{multicol}
\usepackage{colortbl}
\usepackage{setspace}
\usepackage{transparent}
\usepackage{upgreek}

\usepackage{inconsolata}
\usepackage[scaled=.90]{helvet}
\usepackage{xspace}

\icml{
\setlist[enumerate]{leftmargin=*}
\setlist[itemize]{leftmargin=*}
}

\usepackage{graphicx}
\icml{\usepackage{subfigure}}

\usepackage[suppress]{color-edits}
\addauthor{fc}{blue}
\addauthor{sc}{orange}
\addauthor{cd}{pink}
\addauthor{sr}{red}

\usepackage{parskip}

\let\oldparagraph\paragraph

\renewcommand{\paragraph}[1]{\oldparagraph{#1.}}

\newcommand{\normal}[1]{\mathsf{N}\prn*{#1}}

\makeatletter
\g@addto@macro\appendix{%
  \crefalias{section}{appendixsection}%
  \crefalias{subsection}{appendixsubsection}%
  \crefalias{subsubsection}{appendixsubsubsection}%
}
\makeatother
\crefname{appendixsection}{Appendix}{Appendices}
\Crefname{appendixsection}{Appendix}{Appendices}
\crefname{appendixsubsection}{Appendix}{Appendices}
\Crefname{appendixsubsection}{Appendix}{Appendices}
\crefname{appendixsubsubsection}{Appendix}{Appendices}
\Crefname{appendixsubsubsection}{Appendix}{Appendices}
\crefname{assumption}{Assumption}{Assumptions}
\Crefname{assumption}{Assumption}{Assumptions}

\colt{
\title[High-accuracy log-concave sampling with stochastic gradients]{High-accuracy log-concave sampling with stochastic gradients}

\coltauthor{%
 \Name{Author Name1} \Email{abc@sample.com}\\
 \addr Address 1
 \AND
 \Name{Author Name2} \Email{xyz@sample.com}\\
 \addr Address 2%
}
}

\arxiv{
\title{High-accuracy log-concave sampling with stochastic queries}

\author{
  Fan Chen \\ {\small MIT} \\ {\small \texttt{fanchen@mit.edu}} 
  \and Sinho Chewi \\ {\small Yale University} \\ {\small \texttt{sinho.chewi@yale.edu}} 
  \and Constantinos Daskalakis \\ {\small MIT} \\ {\small \texttt{costis@csail.mit.edu}}
  \and Alexander Rakhlin \\ {\small MIT} \\ {\small \texttt{rakhlin@mit.edu}}
}
}

\begin{document}

\maketitle

\begin{abstract}%
    We show that high-accuracy guarantees for log-concave sampling---that is, iteration and query complexities which scale as $\mathrm{poly}\log(1/\delta)$, where $\delta$ is the desired target accuracy---are achievable using stochastic gradients with sub-exponential tails.
    Notably, this exhibits a separation with the problem of convex optimization, where stochasticity (even additive Gaussian noise) in the gradient oracle incurs $\mathrm{poly}(1/\delta)$ queries.
    We also give an information-theoretic argument that light-tailed stochastic gradients are necessary for high accuracy: for example, in the bounded variance case, we show that the minimax-optimal query complexity scales as $\Theta(1/\delta)$. Our framework also provides similar high-accuracy guarantees under stochastic zeroth-order (value) queries, \scedit{and an improved complexity result for sampling from finite-sum potentials.}
\end{abstract}

\section{Introduction}

We study the problem of sampling from a log-concave density $\mu \propto e^{-f}$ given access to a stochastic gradient oracle for $f$.
Our main result shows that if the stochastic gradients are unbiased and have light tails (e.g., sub-exponential), then it is possible to generate a $\delta$-accurate sample in total variation distance in $\polylog(1/\delta)$ queries and time.
We refer to such a guarantee as a \emph{high-accuracy} guarantee.

Recent works take inspiration from the close connections between log-concave sampling and the better-understood field of convex optimization. From that standpoint, the phenomenon we highlight here could be surprising.
Indeed, it is well-known that optimization in the presence of noisy gradients---even additive Gaussian noise---does not admit high-accuracy guarantees.
Information-theoretic lower bounds~\citep{ Aga+12StochasticOpt,raginsky2011information} establish that the optimal bounds are $1/\delta$ in the strongly convex case, and $1/\delta^2$ in the weakly convex case.

On the other hand, in the literature on Markov chain Monte Carlo (MCMC), there are remarkable examples of ``exact MCMC'' methods in which various components of the algorithm are replaced by unbiased estimates,  yet the resulting Markov chain remains geometrically ergodic toward the original stationary distribution.
For example, suppose that $\mu$ is the marginal distribution over a parameter $\theta$, but there is an additional latent variable $z$.
In this case, the exact density can be difficult to compute, but unbiased estimators can be produced via importance sampling. Incorporating these estimators into Metropolis--Hastings algorithms leads to the class of pseudo-marginal MCMC methods~\citep{AndRob09PseudoMarginal}, some of which are exact.

When $f$ is a finite sum of functions (e.g., negative log-likelihoods in a statistical context), there is a great need to develop samplers which make use of batched stochastic gradients, echoing the stochastic gradient revolution in machine learning.
This led to the widespread use of stochastic gradient Langevin dynamics (SGLD)~\citep{WelTeh11SGLD}; see~\citet{NemFea21SGMCMC} for a survey of recent developments.
These methods are based on discretizations of diffusions and are therefore not exact, i.e., they do not admit high-accuracy guarantees.
Other works propose minibatch variants of Metropolis--Hastings methods~\citep{Sei+18BatchMH, ZhaCooDeS20BatchMH, WuWanWon22BatchMH}, leading to tailored algorithms but often without quantitative convergence guarantees.
A notable recent advance is the work of~\citet{LeeSheTia21RGO}, which developed a high-accuracy sampler for the finite sum setting; subsequent work~\citep{gopi2022private,gopi2023algorithmic} developed high-accuracy samplers with stochastic \emph{value} (zeroth-order) queries.
\scedit{We revisit the finite-sum setting in \cref{ssec:finite_sum}.}

Our interest lies in generalizing the above observations to the black-box setting, in which no particular structure for $\mu$ is assumed except log-concavity, the starting point of most non-asymptotic analyses~\citep{Chewi26Book}, as well as generic properties of the stochastic gradient oracle.
In doing so, we aim to provide precise, non-asymptotic guarantees that incorporate recent state-of-the-art advances in sampling theory so that these guarantees are as sharp as possible.

\subsection{Contributions}

Our main contribution is the development of high-accuracy samplers in the presence of stochastic gradient noise, provided that the stochastic gradients have light tails (e.g., sub-exponential or sub-Gaussian).
As a preview of our results, suppose that the target distribution is $\alpha$-strongly log-concave and $\beta$-log-smooth.
Then, state-of-the-art guarantees for high-accuracy sampling~\citep{Chewi+21MALA, WuSchChe22MALA, FanYuaChe23ImprovedProx, AltChe24Warm, chen2026high} have established that it is possible to draw a $\delta$-accurate sample in total variation distance from $\mu \propto e^{-f}$ using
\begin{align*}
    \wt O\prn[\big]{\kappa d^{1/2}\, \polylog(1/\delta)} \qquad\text{queries to exact oracles for}~f,~\nabla f\,,
\end{align*}
where $\kappa\deq \beta/\alpha$ is the condition number of $\mu$.

A consequence of our results is that it is in fact possible to draw a sample in
\begin{align*}
    \wt O\prn[\big]{(\kappa d^{1/2} + \sigma^2/\alpha)\,\polylog(1/\delta)}\qquad\text{queries to a \emph{stochastic} oracle for}~\nabla f\,,
\end{align*}
provided that the unbiased stochastic estimates $g(x)$ of the gradient $\nabla f(x)$ satisfy the sub-exponential tail bound $\En\exp(\|g(x)-\nabla f(x)\|/\sigma) \le 2$.
We note that our main results are considerably more general, allowing for both log-concavity and log-smoothness to be relaxed, and covering noisy zeroth-order queries as well; see \cref{ssec:log-concave} for details.

This demonstrates a surprising \emph{robustness to noise} for sampling, in that $\sigma^2/\alpha$ appears additively in the final bound and does not significantly deteriorate the dependence on the target accuracy $\delta$.
As discussed above, this is a stark departure from the corresponding results in optimization, in which stochasticity quickly degrades the rates to $\poly(1/\delta)$ regardless of the tail behavior.

We further remark that the work of~\citet{ChaBarLon22Lower} established a lower bound of $\Omega(\sigma^2/\delta^2)$ in a certain regime, even under Gaussian additive noise.
In \cref{ssec:bartlett}, we explain why this does not contradict our results: their lower bound example requires the strong log-concavity parameter $\alpha$ to tend to zero with $\delta$; in fact, $\alpha \lesssim \delta^2$.
Hence, our results imply that the $\Omega(\delta^{-2})$-scaling is in fact a consequence of the target distribution being ill-conditioned.
However, this raises the question of whether one can prove a lower bound which captures the dependence on $\delta$, even when $\alpha$ remains bounded away from zero.

We resolve this question via a new lower bound that captures how the tail behavior of the stochastic gradients affects the complexity of sampling to high precision.
In particular, when we only assume that the stochastic gradients have bounded variance, our lower bound reads $\Omega(1/\delta)$.
This is actually attained by our upper bound algorithm in this setting, establishing that the optimal rate is $\Theta(1/\delta)$ under a bounded variance constraint.
More generally, if we only assume that finitely many moments of the stochastic gradient are bounded, our lower bound shows that $\Omega(1/\delta^c)$ queries are necessary for some exponent $c>0$.

Taken together, our results show that \emph{light-tailed stochastic gradients are both necessary and sufficient for high-accuracy sampling}.

\scedit{Finally, we apply our method to the finite-sum setting $f = m^{-1} \sum_{i=1}^m f_i$ and improve the complexity of high-accuracy sampling from $\widetilde O(m + \kappa\,(\sqrt{md} + d))$~\citep{LeeSheTia21RGO} to $\widetilde O(m + \kappa\sqrt{md})$; see \cref{ssec:finite_sum} for details.}

\section{Preliminaries}

We first define the stochastic gradient/value oracle and its tail behavior.

\begin{assumption}[Stochastic gradient oracle]\label{ass:stoc_grad}
For any $x\in\RR^d$, we can draw i.i.d.\ samples from a distribution $\Ograd{x}$ such that under $g\sim \Ograd{x}$, it holds that $\En[g]=\nabla f(x)$. 

We assume that there is a parameter $\mmt_1>0$ such that $\En_{g\sim \Ograd{x}}\nrm{g-\nabla f(x)}\leq \mmt_1$ for any $x\in\RR^d$.
\end{assumption}

\begin{assumption}[Stochastic value oracle]\label{ass:stoc_0th}
For any $x\in\RR^d$, we can draw i.i.d.\ samples from a distribution $\Oeval{x}$ such that under $v\sim \Oeval{x}$, it holds that $\En[v]=f(x)$. 
\end{assumption}

For any stochastic oracle $O$ and integer $n\geq 1$, we define $O\rep{n}$ to be the \emph{batch oracle} that, given input $x\in\RR^d$, returns $y=\frac{1}{n}\sum_{i=1}^n y^i$ by generating i.i.d.\ samples $y^1,\dotsc,y^n\sim O(x)$.

\begin{definition}[Oracle with $\eps$-tail]\label{def:eps-tail}
Suppose that $\eps=(\eps_n)_{n\geq 1}$ is a sequence of functions. We say an oracle $O$ is of $\eps$-tail if for any $x\in\RR^d$, $M > 0$, $n\geq 1$, it holds that under $g\sim O\rep{n}(x)$, 
\begin{align*}
    \frac{1}{M}\En\brk[\big]{\nrm{g-\En[g]}\,\indic\crl{\nrm{g-\En[g]}>M}}\leq \eps_n(M;x)\,.
\end{align*}
We also denote $\eps_n(M)\deq \sup_{x\in\RR^d} \eps_n(M;x)$.
\end{definition}
Some cases of interest are as follows.
\begin{example}[Sub-polynomial tail]\label{example:sub-poly}
Suppose that for some parameter $\zeta>0$ and $\gvar>0$, for any $x$, under $g\sim O(x)$, $\En\exp\prn[\big]{\frac{\nrm{g-\En[g]}^\zeta}{\gvar^\zeta}}\leq 2$.\footnote{The case $\zeta=2$ corresponds to a sub-Gaussian tail, and $\zeta=1$ to a sub-exponential tail.}

Then, we can choose $\eps_1(M)\leq C_\zeta\exp(-c_\zeta (M/\gvar)^\zeta)$ for $M\geq \gvar$. More generally, 
we can choose $\epsilon_n(M) \le C_\zeta \exp(-c_\zeta (\sqrt{n}M/\gvar)^{\bar{\zeta}})$ where $\bar{\zeta}\deq \min\{\zeta,2\}$.
\end{example}

\begin{example}[Polynomial tail]
\label{example:poly}
Suppose that for some $k\geq1$ and any $x$,  $\En\nrm{g-\En[g]}^{2k}\leq \sigma_{2k}^{2k}$. Then we can choose $\eps_n(M)\leq \frac{(2k)!\sigma_{2k}^{2k}}{n^kM^{2k}}$.
\end{example}

We carry out our analysis under the following H\"older continuity assumption for $\nabla f$.
It interpolates between the Lipschitz case ($s=0$) and the smooth case ($s=1$).

\begin{assumption}[H\"older continuous gradient]\label{ass:holder}
    There exists $s \in [0,1]$ and $\beta_s \ge 0$ such that $\norm{\nabla f(x) -\nabla f(y)} \le \beta_s\norm{x-y}^s$ for all $x,y\in\R^d$.
\end{assumption}

For technical convenience, we also state our results using an approximate proximal oracle.

\begin{assumption}[Approximate proximal oracle]\label{asmp:prox_apx}
Given input $\xz$, the oracle $\Oprox{\xz}$ returns $\xhat$ such that
$\nrm{\xhat+\eta\nabla f(\xhat)-\xz}\leq \eta\epsprox$.

\end{assumption}

Alternatively, if we assume that the guarantee in the assumption holds with high probability, then our results remain unchanged up to another error term in total variation distance.
The following lemma shows that the approximate proximal oracle can be implemented using the stochastic gradient oracle. 

\begin{lemma}\label{lem:implement-prox}
    Suppose that \cref{ass:holder} holds with $s\in[0,1]$ and denote $m_s=\beta_s^{1/(1+s)}$.
    Suppose that $\eta\leq \frac{1}{2m_s}$ and we are given access to a stochastic gradient oracle with $\eps$-tail.

    Then, as long as the input $x_0$ satisfies $\nrm{\nabla f(x_0)}\leq G$, the approximate proximal oracle with $\epsprox=10(m_s+M)$ can be implemented with probability at least $1-\eps_n(M)$ using $O(n\log(G/(m_s+M)))$ queries to the stochastic gradient oracle. 
\end{lemma}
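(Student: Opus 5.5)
We read the approximate proximal oracle as approximately minimizing the strongly convex function $F(x)\deq f(x)+\frac{1}{2\eta}\nrm{x-\xz}^2$. Its gradient is $\nabla F(x)=\nabla f(x)+\frac{1}{\eta}(x-\xz)$, so the required guarantee $\nrm{\xhat+\eta\nabla f(\xhat)-\xz}\le\eta\epsprox$ is the same as $\nrm{\nabla F(\xhat)}\le\epsprox$. Since $f$ is convex, $F$ is $\frac1\eta$-strongly convex. The plan is to run the fixed-point iteration
\[
x_{k+1}=\xz-\eta g_k,\qquad g_k\sim\Ogradn{x_k},
\]
for $K=O(\log(G/(m_s+M)))$ steps, using one batch of size $n$ per step. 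In the noiseless case this is gradient descent on $F$ with step size $\eta$. The map $T(x)=\xz-\eta\nabla f(x)$ is not a contraction when $s<1$, so we avoid any contraction argument. Instead we track the residual $r_k\deq\nrm{x_k+\eta\nabla f(x_k)-\xz}$ directly.

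The key one-step estimate is a bound on $r_{k+1}$. Write $e_k=g_k-\nabla f(x_k)$. Then $x_{k+1}-\xz=-\eta\nabla f(x_k)-\eta e_k$, which gives
\[
r_{k+1}=\eta\nrm{\nabla f(x_{k+1})-\nabla f(x_k)-e_k}\le \eta\beta_s\nrm{x_{k+1}-x_k}^s+\eta\nrm{e_k}.
\]
We also have $\nrm{x_{k+1}-x_k}\le r_k+\eta\nrm{e_k}$. We use Young's inequality in the form $\beta_s u^s\le \frac{s}{\eta}\cdot\frac{u}{2}\cdot(\text{const})+(\text{const})\,\beta_s^{1/(1-s)}\eta^{s/(1-s)}$, and then simplify using $\eta\le\frac1{2m_s}$. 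This should give
\[
r_{k+1}\le \tfrac12 r_k+C\eta(m_s+\nrm{e_k}).
\]
To check the scale of the additive term, note that $\beta_s\eta^s\cdot(\text{scale }u\sim\eta m_s)$ is at most $\eta m_s$ up to constants, because $\beta_s(\eta m_s)^s=m_s^{1+s}\eta^s m_s^{s}\cdots$. The Young step needs to be organized so that the constant is at most about $10/2$. The cases $s=1$ (pure contraction, factor $\eta\beta_1\le\frac14$) and $s=0$ (additive $\eta\beta_0$) are the sanity checks.

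For the noise, we take $\xz$ as the initialization. Then $r_0=\eta\nrm{\nabla f(\xz)}\le\eta G$. On the good event $\nrm{e_k}\le M$ for every $k$, unrolling the recursion gives $r_K\le 2^{-K}\eta G+2C\eta(m_s+M)$. Choosing $K=\lceil\log_2(G/(m_s+M))\rceil$ then yields $r_K\le\eta\cdot 10(m_s+M)$.

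The main obstacle is the probability bound. \cref{def:eps-tail} controls only the truncated first moment $\frac1M\En[\nrm{e}\indic\{\nrm e>M\}]$. Markov's inequality turns this into $\Pr(\nrm{e}>M)\le\eps_n(M)$, but only for a single batch. A union bound over the $K$ steps would therefore give failure probability $K\eps_n(M)$, not $\eps_n(M)$. I would fix this in one of two ways. The first is to spend a single batch of size $nK$: query once at the end to certify, and use the noiseless-style iteration with smaller per-step batches. The second is to absorb the factor $K$ by replacing $M$ with a constant multiple of $M$, since the tail functions of \cref{example:sub-poly} decay quickly. If neither is clean, I would accept the $K\eps_n(M)$ bound, since a logarithmic factor is harmless. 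I would also note that the final query count is $nK=O(n\log(G/(m_s+M)))$.
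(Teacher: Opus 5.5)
Your deterministic analysis can be completed (see below). The gap is the probability step, which you correctly flag as the crux but do not resolve. Your fallback, a union bound over the events $\{\nrm{e_k}\le M\}$, gives failure probability $K\eps_n(M)$, which is strictly weaker than the claimed $\eps_n(M)$.

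Neither proposed repair works in the generality of the lemma. First, \cref{def:eps-tail} imposes no decay rate on $M\mapsto\eps_n(M)$, so inflating $M$ by a constant cannot absorb a factor $K$. Already for the polynomial tails of \cref{example:poly} you would need $M\to K^{1/(2k)}M$, which is a non-constant change in $\epsprox$. Second, a final stochastic ``certification'' is only as reliable as its own tail and cannot repair a trajectory that has already gone astray; smaller per-step batches only make the per-step tails worse.

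The missing idea, which is what the paper uses, is to require only that a \emph{geometrically weighted sum} of the errors be small, not each error. The $\eps$-tail condition controls a truncated first moment, and this behaves linearly. Unrolling your recursion gives $r_K\le 2^{-K}\eta G+2\eta m_s+\frac32\eta Y$, where $Y=\sum_{i<K}w_i\nrm{e_i}$, $w_i=2^{-(K-1-i)}$, and $W\deq\sum_i w_i\le 2$. Two facts then suffice:
\begin{itemize}
\item $(Y-MW)_+\le\sum_i w_i(\nrm{e_i}-M)_+$;
\item conditionally on $x_i$, $\En(\nrm{e_i}-M)_+\le\En[\nrm{e_i}\indic\{\nrm{e_i}>M\}]\le M\eps_n(M)$.
\end{itemize}
Applying $\Pr(Y\ge 2y)\le\En(Y-y)_+/y$ with $y=MW$ gives $\Pr(Y\ge 2MW)\le\eps_n(M)$. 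There is no factor of $K$, and no independence is needed, only the tower property. On the complementary event, $r_K\le\eta(m_s+M)+2\eta m_s+6\eta M\le 10\eta(m_s+M)$ for $K=\lceil\log_2(G/(m_s+M))\rceil$, which is exactly the statement.

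Your deterministic route differs from the paper's. The paper runs the damped iteration $X_{k+1}=\frac12(X_k-\eta g_k+\xz)$ and tracks $\nrm{X_k-\xstar}$ to the exact prox point; the damping compensates for its triangle-inequality bound $\Delta_k+\eta\beta_s\Delta_k^s$, and it converts to the residual only at the end. Your undamped iteration, tracking $r_k$ directly, is slightly cleaner. It never needs $\xstar$, nor convexity, which the lemma does not assume and which you never actually use.

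The Young step you left open closes as follows. We have $\eta\beta_s u^s\le\frac12 u+\frac{1-s}{2s}(2s\eta\beta_s)^{1/(1-s)}$ and $(\eta\beta_s)^{1/(1-s)}=\eta m_s(\eta m_s^2)^{s/(1-s)}$. Hence $\eta\beta_s u^s\le\frac12 u+\eta m_s$ whenever $\eta m_s^2\le\frac12$, which yields $r_{k+1}\le\frac12 r_k+\eta m_s+\frac32\eta\nrm{e_k}$.

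Note that the operative condition is $\eta m_s^2\le\frac12$ (for $s=1$, this is $\eta\beta_1\le\frac12$), not $\eta m_s\le\frac12$. So your scale check cannot be completed from the hypothesis as printed. The paper's own AM--GM step tacitly needs the same strengthening; it holds in all of the paper's applications, since there $\eta^{-1}\gg m_s^2$.
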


\paragraph{Notation}
For any function $f:\RR^d\to\RR$ such that $Z_f\deq \int_{\RR^d} e^{-f(x)}dx<+\infty$, we define $\mu_f$ to be the distribution over $\RR^d$ with density $\mu_f(x)=\frac{1}{Z_f}e^{-f(x)}$.

For $B > 0$, we write $\pclip{\cdot} \deq \max\{-B, \min\{B, \cdot\}\}$ \scedit{and $\trunc{\cdot} \deq (\abs\cdot-B)_+$}. We use $\leqsim$ and $O(\cdot)$ to hide absolute constants, i.e., $f\leqsim g$ (and $f=O(g)$) if there is an absolute constant such that $f\leq Cg$.
The notation $\wt O(\cdot)$ hides logarithmic factors.

\section{High-accuracy sampling with stochastic queries}\label{sec:upper}

We build up to our results in three steps.
Our methods build upon first-order rejection sampling (FORS), a meta-algorithm recently developed in~\citet{chen2026high} which simulates rejection sampling given unbiased estimators of the log-density ratio between the proposal and target.
Therefore, we first review the FORS framework in \cref{sec:motivation}.
Then, in \cref{ssec:gaussian-tilt-stoch}, we instantiate FORS for the problem of sampling from a Gaussian tilt distribution, thereby showing that the results of~\citet{chen2026high} are robust to stochastic gradient noise.
Finally, in \cref{ssec:log-concave}, we combine the results of \cref{ssec:gaussian-tilt-stoch} with the proximal sampler algorithm~\citep{LeeSheTia21RGO, Chen+22ProxSampler} to establish our main results for log-concave sampling.

\subsection{Background on first-order rejection sampling (FORS)}\label{sec:motivation}

To motivate the FORS algorithm, we replicate the motivating example of~\citet{chen2026high} here. Consider the simple problem of sampling from a density $p\propto e^{-f}$, where $f : [0,1]\to\R$, $f(0) = 0$, and $-1 \le f' \le 1$.
In order to perform rejection sampling with the base measure $\unif([0,1])$, we must generate randomness $b\sim \Ber(ce^{-f(x)})$ for any given $x\in[0,1]$.
To do so, one typically assumes access to evaluations of $f$ itself.
The novelty of FORS lies in recognizing that this is unnecessary---it suffices to produce \emph{unbiased estimators} of $f$.

A more general version of this idea is known as the ``Bernoulli factory'' problem \citep{keane1994bernoulli,nacu2005fast}, and variants of this idea can be found in multiple domains~\citep[e.g.,][]{Wag1988SDE, Pap11Diffusion}.
It can be stated as the following abstract task:
\begin{center}
    \textbf{Task:} Given i.i.d.\ random variables $W_1,W_2,W_3,\dotsc$ in $[-1,1]$, generate a sample $b\sim \Ber(ce^{\En W_1})$.
\end{center}

To solve this, write the Taylor series as
\begin{align}\notag
    e^{\En W_1}=e^{-1}\cdot e^{\En[1+W_1]}
    =\sum_{j\geq 0} \frac{e^{-1}}{j!}\, \bigl(\En[1+W_1]\bigr)^j\,.
\end{align}
Suppose that $J\sim \Poi(2)$ is independent of the i.i.d.\ sequence $W_1,W_2,W_3,\dotsc$. Then we notice that
\begin{align}\notag
    e^{\En W_1}=e\En\brk[\Big]{\prod_{j=1}^J \bigl(\frac{1+W_j}{2}\bigr)}\,,
\end{align}
so we can set $b\sim \Ber\prn[\big]{\prod_{j=1}^J \bigl(\frac{1+W_j}{2}\bigr)}$.
Indeed, $\Pr(b=1) = \E\prod_{j=1}^J \bigl(\frac{1+W_j}{2}\bigr) = e^{-1+\En W_1}$.

In summary, to generate a sample $b\sim \Ber\prn{ce^{-f(x)}}$, it suffices to have access to (a random number of) unbiased estimates of $f(x)$. In~\citet{chen2026high}, this was leveraged to produce high-accuracy samplers which only use queries to the \emph{derivative} $f'$, via the representation $f(x)=\En_{y\sim \unif([0,x])}[xf'(y)]$.
In our work, our goal is to leverage this phenomenon in order to produce high-accuracy samplers that \emph{tolerate stochasticity in the gradient oracle}.

We are now ready to state the general FORS meta-algorithm. Given a proposal distribution $q$, a tilt function $w$, and a tuneable parameter $B = \Theta(1)$, the goal of \cref{alg:fors} is to produce a sample from $\widehat p(x)\propto q(x)\, e^{w(x)}$ without having access to the \emph{value} $w(x)$. Instead, for each $x\in\RR^d$, we can generate i.i.d.\ samples $W_1,W_2, W_3\dotsc$ such that $\En[W_1\mid x]=w(x)$. 
Let $\mathcal W_x$ denote the conditional distribution of $W_1$ given $x$.

\begin{algorithm}
\caption{First-order rejection sampling (FORS)}\label{alg:fors}
\begin{algorithmic}
\STATE \textbf{Input:} Parameter $B > 0$, proposal distribution $q$ over $\R^d$, estimator distributions $(\cW_x)_{x\in\R^d}$ supported on $[-B,B]$
\FOR{$i=1,2,3,\dotsc$}
\STATE Sample $x\sim q$.
\STATE Sample $J\sim \Poi(2\B)$.
\STATE Sample i.i.d. $W_1,\dotsc,W_J \sim \mathcal W_x$.
\STATE Output $x$ with probability $\prod_{j=1}^J \frac{B+W_j}{2B}$.
\ENDFOR
\end{algorithmic}
\end{algorithm}

\begin{theorem}[{FORS guarantee,~\citet[Theorem 3.1]{chen2026high}}]\label{thm:fors}
    \cref{alg:fors} outputs a random point with density $\widehat p(x)\propto q(x)\, e^{\En[W_1\mid x]}$.
    The number of sampled $W_j$'s is bounded, with probability at least $1-\delta$, by $3Be^{2B}\log(2/\delta)$.

    Moreover, if~\cref{alg:fors} is called $T$ times, then with probability at least $1-\delta$, the total number of sampled $W_j$'s is $O(Be^{2B}\,(T+\log(1/\delta)))$.
\end{theorem}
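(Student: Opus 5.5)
The plan has three parts: compute the per-round acceptance probability, identify the output law of the first accepted proposal, and bound the number of estimator draws by a geometric/Poisson tail argument.

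\emph{Acceptance probability.} Fix a proposal $x$. Condition on $x$ and on $J$; the $W_j$ are i.i.d.\ with mean $w(x)\deq\En[W_1\mid x]$, so
\begin{align*}
\En\Bigl[\prod_{j=1}^J \tfrac{B+W_j}{2B}\Bigm| x,J\Bigr]=\Bigl(\tfrac{B+w(x)}{2B}\Bigr)^J .
\end{align*}
Averaging over $J\sim\Poi(2B)$ with the Poisson generating function $\En[t^J]=e^{2B(t-1)}$ gives
\begin{align*}
a(x)\deq\Pr(\text{accept}\mid x)=\exp\bigl(2B\bigl(\tfrac{B+w(x)}{2B}-1\bigr)\bigr)=e^{-B}e^{w(x)}.
\end{align*}
The acceptance coin is valid because $W_j\in[-B,B]$ makes each factor lie in $[0,1]$. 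Since $w\in[-B,B]$, we get $a(x)\in[e^{-2B},1]$.

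\emph{Output law.} The rounds are i.i.d., so the output is distributed as $x\sim q$ conditioned on acceptance. Its density is therefore proportional to $q(x)a(x)\propto q(x)e^{w(x)}$, which is the claimed $\widehat p$. The overall acceptance probability is $p_{\rm acc}=\En_q[a]\ge e^{-2B}$, so the number of rounds $R$ is geometric with success probability at least $e^{-2B}$.

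\emph{Query count.} The number of $W$'s drawn is $N=\sum_{i\le R}J_i$. One subtlety is that $J_i$ is not independent of acceptance. We avoid this by dominating $N$ by a sum over a deterministic number of rounds.
\begin{itemize}
\item Set $r=\lceil e^{2B}\log(2/\delta)\rceil$. Then $\Pr(R>r)\le(1-e^{-2B})^r\le\delta/2$.
\item On the event $R\le r$, we have $N\le\sum_{i=1}^{r}J_i\sim\Poi(2Br)$, where the $J_i$ are the Poisson draws of the first $r$ rounds, run regardless of stopping.
\item A Chernoff bound gives $\Pr(\Poi(\lambda)\ge x)\le e^{-\lambda}(e\lambda/x)^x$. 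With $\lambda=2Br$ and $x=\tfrac32\lambda$, this yields tail probability $e^{-c\lambda}$ for $c=1.5\log 1.5-0.5\approx0.108$.
\end{itemize}
This does not directly give $\delta/2$ when $B$ is small. I would therefore choose $x$ slightly larger. Alternatively, I would bound $\Pr(N>k)$ directly by $\Pr(R>k/(3B)\cdot\ldots)$ plus a Poisson tail. If needed I would adjust the constant (taking $r$ and the threshold as $\approx 3Be^{2B}\log(2/\delta)$ total), since $\lambda\ge 2Be^{2B}\log(2/\delta)$. This constant-juggling to land exactly on $3Be^{2B}\log(2/\delta)$ is the main technical nuisance, not a conceptual obstacle.

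\emph{$T$ calls.} The total number of rounds across $T$ calls is a sum of $T$ independent geometrics with success probability at least $e^{-2B}$. By a negative-binomial (equivalently binomial) Chernoff bound, it is at most $O(e^{2B}(T+\log(1/\delta)))$ with probability $1-\delta/2$. On that event the total draw count is dominated by a Poisson with mean $O(Be^{2B}(T+\log(1/\delta)))$. A Poisson Chernoff bound keeps it within a constant factor of that mean with probability $1-\delta/2$, because the mean is at least of order $\log(1/\delta)$ up to the factor $B$; this again needs care when $B$ is small. A union bound over the two events finishes the argument.
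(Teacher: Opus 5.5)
\textbf{What is sound.} Three parts of your argument are correct, and this is the natural route (the paper imports this theorem from prior work without proof):
the acceptance probability $a(x)=e^{w(x)-B}\in[e^{-2B},1]$ via the Poisson generating function;
the identification of the output law as $q$ conditioned on acceptance;
and the deterministic-horizon coupling. Drawing $J_1,J_2,\dots$ for every round, so that $\sum_{i\le r}J_i\sim\Poi(2Br)$ exactly, correctly sidesteps the dependence between $J_i$ and acceptance.
Together these give the right orders $O(Be^{2B}\log(2/\delta))$ and $O(Be^{2B}(T+\log(1/\delta)))$, provided $B$ is bounded below.

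\textbf{The gap.} The gap is the final step, which you call constant-juggling. It is not: for small $B$ the stated bound is false, so no choice of $r$ or Poisson threshold can deliver it. The first round always draws $J_1\sim\Poi(2B)$ estimators, so $\Pr(N\ge 1)\ge 1-e^{-2B}$. Take $B=0.01$ and $\delta=10^{-3}$. Then $3Be^{2B}\log(2/\delta)\approx 0.23<1$, so the claim would force $\Pr(N\ge 1)\le 10^{-3}$, whereas $1-e^{-0.02}\approx 0.02$. Likewise, letting $B\to 0$ with $\delta=B$ shows the $T$-call bound fails for any absolute constant in the $O(\cdot)$. You therefore need an explicit hypothesis such as $B\ge 1$, in line with the paper's standing choice $B=\Theta(1)$.

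\textbf{Finishing under $B\ge 1$.} You still have to complete the estimate. Your split is fine in the main regime, since $\lambda=2Br\ge 2e^{2}\log(2/\delta)$ gives $e^{-0.108\lambda}\le(\delta/2)^{1.59}$. However, with $r=\lceil e^{2B}\log(2/\delta)\rceil$ your threshold $\tfrac32\lambda=3Br$ exceeds the target $3Be^{2B}\log(2/\delta)$. So your Poisson estimate does not bound the event you need. Lowering the threshold to the target costs slack. That slack is ample when $e^{2B}\log(2/\delta)\ge 20$: there the Chernoff exponent is still at least $1.25\log(2/\delta)$. It is not ample when $e^{2B}\log(2/\delta)$ is of order one, i.e.\ for $\delta$ close to $1$. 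That regime needs a sharper Poisson tail or a direct check.
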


\subsection{Sampling from Gaussian tilts with stochastic queries}\label{ssec:gaussian-tilt-stoch}

The goal of this section is to sample from the following Gaussian tilt distribution:
\begin{align}\label{eq:gaussian-tilt}
    \nu(x) \propto \exp\prn[\Big]{-f(x) - \frac{1}{2\eta}\,\nrm{x-x_0}^2}\,.
\end{align}
In \cref{ssec:log-concave}, this will be used as a subroutine for the proximal sampler algorithm~\citep{LeeSheTia21RGO, Chen+22ProxSampler}, leading to new guarantees for log-concave sampling.

\begin{remark}[Diffusion models]
    Leveraging the fact that the backward transition kernels along a diffusion model are also Gaussian tilts of the form~\eqref{eq:gaussian-tilt},~\citet{chen2026high} used FORS to provide the first \emph{high-accuracy} sampling guarantees for diffusion models under minimal data assumptions.
    Similarly, the results we present below could also be applied to that setting to show that diffusion sampling can be made \emph{robust to stochastic errors in the score evaluations}.
    For brevity, we do not pursue this application here.
\end{remark}

We now consider instantiating FORS for the Gaussian tilt distribution~\eqref{eq:gaussian-tilt}.
Let $\gam{\lr}\ldef \gamma(x;z,r)$ be any \emph{path function} such that $\gam{1}=x$ and $\gam{0}=\wb{\gamma}(z)$ is independent of $x$; here, $z\sim P$ is an external source of randomness.
Then, by the fundamental theorem of calculus,
\begin{align*}
    f(x) - \E_{z\sim P} f(\wb{\gamma}(z)) = \E_{r\sim\Unif([0,1]),\,z\sim P}\langle \dot\gamma_{z,r}(x), \nabla f(\gamma_{z,r}(x))\rangle\,.
\end{align*}
If we choose the proposal $q=\normal{\wh x,\eta\Id}$, where $\wh x$ is a fixed base point chosen so that $q\approx \nu$ (made precise in \cref{thm:gaussian_tilt_stoc}), then \colt{$q(x) \propto \exp\bigl(-\frac{1}{2\eta}\norm{x-\wh x}^2\bigr)$,}
\arxiv{
\begin{align*}
    q(x) \propto \exp\Bigl(-\frac{1}{2\eta}\norm{x-\wh x}^2\Bigr)\,,
\end{align*}
}
and hence
\begin{align*}
    \log \nu(x)-\log q(x)
&= \scedit{-}f(x) - \frac{1}{2\eta}\,\|x-x_0\|^2 + \frac{1}{2\eta}\,\|x-\xhat\|^2 + \const\\
&= \frac{1}{\eta}\,\langle x_0-\xhat,x \rangle -f(x)+ \const\,.
\end{align*}
Thus, applying the path integral formula to $h(x)=\eta^{-1}\,\langle x_0-\xhat,x\rangle -f(x)$, we can express
\begin{align*}
    \log \rgo(x)-\log q(x)%
    = \E_{r\sim\Unif([0,1]),\,z\sim P}\Bigl\langle \dot\gamma_{z,r}(x),\, \eta^{-1}(x_0-\wh x)  -  \nabla f(\gamma_{z,r}(x))\Bigr\rangle+\const\,.
\end{align*}
By the guarantee of \cref{thm:fors}, it suggests that we use the unbiased estimator $W_{r,z,x} \deq \langle \dot\gamma_{z,r}(x), u - \nabla f(\gamma_{z,r}(x))\rangle$, with $u \deq (x_0-\wh x)/\eta$.
Actually, since the $W$'s in \cref{alg:fors} must lie in $[-B,B]$, we truncate the estimator to lie in this range.
Further, we replace the exact gradients by stochastic gradients, leading to
\begin{align*}
    \wh W_{r,z,g,x}
    \deq \pclip{\langle \dot\gamma_{z,r}(x), u - g\rangle}\,, \qquad g\sim \Ograd{\gamma_{z,r}(x)}\,.
\end{align*}
Below, we choose the base point of the proposal $\wh x$, the path function $\gamma_{z,r}$, and the noise distribution $P$ in order to optimize the dimension dependence of our result.

\begin{theorem}[Sampling from Gaussian tilts]\label{thm:gaussian_tilt_stoc}
    Suppose that \cref{ass:holder} holds, $\Ograd{\cdot}$ has $\eps$-tail, $n\geq 1$, and $B=\Theta(1)$.

    Instantiate~\cref{alg:fors} as follows: 
    \begin{itemize}
        \item $q = \normal{\xhat,\eta\Id}$, where $\xhat$ is drawn from $\Oprox{\xz}$. We write $u\deq \frac{\xz-\xhat}{\eta}$.
        \item $\cW_x$ is the law of $\pclip{W_{r,z,g,x}}$, where
        \begin{align}\label{eq:def-stoc-W}
        W_{r,z,g,x}=\tri{\gamp{\lr}, u-g}\,,\quad
        r\sim \Unif([0,1])\,,\; z\sim \normal{0,\eta\Id}\,,\; 
        g\sim \Ogradn{\gam{\lr}}\,,
        \end{align}
        and
        \begin{align}\label{eq:gamma}
            \gamz[x][z][\lr]=\alr x + (1-\alr) \xhat + \blr z\,, \qquad
            \alr=\sin(\pi\lr/2)\,, ~~\blr=\cos(\pi\lr/2)\,,
        \end{align}
        so that $\gamzp[x][z][\lr]=\alrp (x -\xhat) + \blrp z$. 
    \end{itemize}
    Then, conditioned on $\nrm{u-\nabla f(\xhat)}^2\leq \epsprox^2$ and
    \begin{align*}
        \eta^{-1} \gg \prn[\Big]{ \beta_s^2d^s\log(1/\delta)+\frac{s\beta_s^2}{d^{1-s}}\log^2(1/\delta) }^{1/(1+s)}+(M^2+\epsprox^2)\log(1/\delta)\,,
    \end{align*}
    the law $\wh\nu$ of~\cref{alg:fors} satisfies $\Dtv{\nu}{\wh\nu} \le \delta+C\En_{x\sim \nu}\min\crl*{\eps_n(M;x),1}$, where $C$ is an absolute constant.
\end{theorem}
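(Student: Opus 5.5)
By \cref{thm:fors}, the output law is exactly $\wh\nu(x)\propto q(x)\,e^{\bar w(x)}$ with $\bar w(x)\deq\En[\pclip{W_{r,z,g,x}}\mid x]$. The target is $\nu(x)\propto q(x)\,e^{h(x)}$, where $h(x)=\En[W_{r,z,\nabla f,x}\mid x]$ comes from the path-integral identity. Here $W_{r,z,\nabla f,x}$ means the estimator with the exact gradient at $\gam{\lr}$ in place of $g$, and the identity holds up to an additive constant. This constant is the same for every $x$, because $\gam{0}=\xhat+z$ does not depend on $x$. So the task reduces to bounding $\Dtv{\nu}{\wh\nu}$ for two tilts of the same proposal.

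\textbf{Step 1: reduce to an $L^1(\nu)$ bound.} For two densities $\propto q e^{h}$ and $\propto q e^{\bar w}$, the standard estimate gives
\[
\Dtv{\nu}{\wh\nu}\lesssim \En_{x\sim\nu}\min\{|e^{\bar w(x)-h(x)}-1|,1\}.
\]
(Shifting $h$ by a constant changes nothing.) It therefore suffices to bound $|\bar w(x)-h(x)|$ pointwise, and to average over $\nu$ where needed. Since $\En[g]=\nabla f(\gam{\lr})$ under the batch oracle, the only bias comes from clipping. I bound it by
\[
|\bar w-h|\le \En\brk[\big]{\trunc{W_{r,z,g,x}}\mid x}\le \En\brk[\big]{\trunc{\tri{\gamp{\lr},u-\nabla f(\gam{\lr})}}}+\En\brk[\big]{\trunc[B/2]{\tri{\gamp{\lr},\nabla f(\gam{\lr})-g}}},
\]
using $\trunc{a+b}\le\trunc[B/2]{a}+\trunc[B/2]{b}$. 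The first term is deterministic-gradient truncation. The second is noise truncation.

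\textbf{Step 2: the deterministic term.} I write
\[
u-\nabla f(\gam{\lr})=\bigl(u-\nabla f(\xhat)\bigr)+\bigl(\nabla f(\xhat)-\nabla f(\gam{\lr})\bigr).
\]
The first piece has norm at most $\epsprox$. By \cref{ass:holder}, the second has norm at most $\beta_s\nrm{\gam{\lr}-\xhat}^s$. For $x\sim\nu\approx\normal{\xhat,\eta\Id}$ and $z\sim\normal{0,\eta\Id}$, both $x-\xhat$ and $\gamp{\lr}$ behave like $\sqrt{\eta}\,\times$ Gaussian vectors, up to factors of $\pi/2$. The inner product $\tri{\gamp{\lr},u-\nabla f(\xhat)}$ is then sub-Gaussian with scale $\sqrt\eta\,\epsprox$, since it is one Gaussian direction. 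The Hölder piece is at most $\sqrt{\eta d}\cdot\beta_s(\eta d)^{s/2}$ in crude form.

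The sharper bound with $d^s$ instead of $d^{1+s}$ needs more care, and this is the main obstacle. The point of the rotation $\alr^2+\blr^2=1$ is that if $x-\xhat\sim\normal{0,\eta\Id}$, then $\gam{\lr}$ and $\gamp{\lr}$ are \emph{independent} Gaussians. Conditionally on $\gam{\lr}$, the quantity $\tri{\gamp{\lr},v}$ is then $\normal{0,c\,\eta\nrm{v}^2}$, with $\nrm{v}\lesssim\beta_s(\eta d)^{s/2}$ with high probability. This yields tails of order
\[
\exp\!\bigl(-1/(\eta\,\beta_s^2(\eta d)^s)\bigr),
\]
plus a correction from concentration of $\nrm{\gam{\lr}-\xhat}$, which gives the $s\beta_s^2d^{s-1}\log^2$ term.

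To transfer this from the Gaussian to $\nu$, I plan to first show that $\nu$ and $q$ have bounded density ratio in a suitable sense, or to use a change of measure. Then I check that the stated condition on $\eta^{-1}$ makes these tails $\le\delta$, so that $\En\trunc[B/2]{\cdot}\lesssim\delta$ in the relevant expectations. I would try first to establish the $L^1$-type comparison via Cauchy–Schwarz with the $\chi^2$ divergence between $\nu$ and $q$. That divergence is $O(1)$ under the same step-size condition, by a log-concavity-free Gaussian tilt bound on $h$.

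\textbf{Step 3: the noise term.} Conditionally on $r,z,x$, write $\xi=g-\nabla f(\gam{\lr})$, and split on whether $\nrm{\xi}\le M$. If $\nrm{\xi}\le M$, then $\tri{\gamp{\lr},\xi}$ is at most a Gaussian of scale $\sqrt\eta\,M$ in the independent direction, and $\eta M^2\log(1/\delta)\ll1$ makes its truncation $\lesssim\delta$. The event $\nrm{\xi}>M$ contributes at most
\[
\nrm{\gamp{\lr}}\,\En\brk[\big]{\nrm{\xi}\,\indic\{\nrm{\xi}>M\}}\le\nrm{\gamp{\lr}}\,M\,\eps_n(M;\gam{\lr}).
\]
Since $\nrm{\gamp{\lr}}M\lesssim\sqrt{\eta d}\,M$ is controlled, this gives an $\eps_n$-type contribution. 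After averaging over $x\sim\nu$, I expect to bound it by $C\En_\nu\min\{\eps_n(M;x),1\}$, though this requires relating $\eps_n(M;\gam{\lr})$ to $\eps_n(M;x)$, which is perhaps absorbed using the supremum. Combining this with Step 1 via $|e^t-1|\lesssim|t|$ for bounded $t$ gives the claim.
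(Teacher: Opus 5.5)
Your Step~1 reduction is false as stated, and this is the main gap. Take $\bar w\equiv 0$, so that $\wh\nu=q$. Let $h=0$ on a set $G$ with $q(G)=1/2$, and $h=-L$ off $G$. Then $\bar w-h$ vanishes on a set of $\nu$-mass $1-O(e^{-L})$, so $\En_{\nu}\min\{|e^{\bar w-h}-1|,1\}\approx e^{-L}$. Yet $\Dtv{\nu}{\wh\nu}\to 1/2$. About half of the normalizing constant $\En_\nu e^{\bar w-h}$ comes from a region that $\nu$ barely sees but that $q$, and hence $\wh\nu$, charges. So pointwise control of $\bar w-h$ averaged over $\nu$ cannot suffice, and part of the error must be measured under $q$.

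The paper does this by applying \cref{lem:Renyi-to-diff} with base measure $\wh\nu$, which gives $\Dtv{\nu}{\wh\nu}\le\En_{\wh\nu}[e^{2V}-1]$. It then uses that clipping forces $d\wh\nu/dq\le e^{2B}$, so everything is computed under $x\sim q$. A repair closer to your plan also works:
\begin{itemize}
\item With the path-integral normalization one has $\En_q h=0$, hence $\En_q e^{h}\ge 1$.
\item Then $\Dtv{\nu}{\wh\nu}\le\En_\nu|e^{D}-1|$ for $D=\bar w-h$.
\item The contribution of $\{D>1\}$ is at most $e^{B}q(D>1)$.
\item The contribution of $\{D\le 1\}$ is $\lesssim\En_\nu\min\{|D|,1\}$, which you then transfer to $q$.
\end{itemize}
Either way the computation must take place under $q$. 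Only there are $\gamma_{z,r}(x)$ and $\dot\gamma_{z,r}(x)$ exactly independent Gaussians \eqref{eq:ind-Gaussian}, and your Steps~2 and~3 use this fact while nominally averaging over $x\sim\nu$.

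Step~3 has two further gaps.

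First, bounding the tail event by $\nrm{\dot\gamma}\,\En[\nrm{\xi}\indic\{\nrm{\xi}>M\}]$ loses a factor $\sqrt d$. The hypothesis only gives $\eta M^2\log(1/\delta)\ll 1$, so $\sqrt{\eta d}\,M$ is not controlled, and you would obtain $\sqrt d\,\eps_n$ rather than $C\eps_n$. As in your $\nrm{\xi}\le M$ branch, you must average over $\dot\gamma$ first, using that under $x\sim q$ it is independent of $(\gamma,g)$. Then $\En_{\dot\gamma}|\tri{\dot\gamma,\xi}|\asymp\sqrt\eta\,\nrm{\xi}$, and the tail contributes $\lesssim\sqrt\eta M\,\eps_n(M;\gamma)\lesssim\eps_n(M;\gamma)$.

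Second, replacing $\eps_n(M;\gamma)$ by its supremum proves a strictly weaker statement than the theorem. Two facts are missing:
\begin{itemize}
\item For $x\sim q$, the point $\gamma_{z,r}(x)$ is itself exactly $q$-distributed, since $a_r^2+b_r^2=1$.
\item The linear coverage bound $\En_q\phi\le e\En_\nu\phi+\delta$ holds for $\phi\in[0,1]$ (\cref{cor:tilt-coverage}). It comes from R\'enyi bounds of order $\ell\asymp\log(1/\delta)$ via \cref{lem:Renyi-to-cov}.
\end{itemize}
The Cauchy--Schwarz/$\chi^2$ transfer you suggest would only give a square-root dependence on $\En_{x\sim\nu}\min\{\eps_n(M;x),1\}$.

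With these repairs your $L^1$-type route does go through. It needs only first moments of the noise term. The paper instead bounds an exponential moment via \cref{lem:clip-tail} and its reweighting $d\bar Q/dQ\propto\nrm{v}+M$.
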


In our application to log-concave sampling, $\eta$ will be interpreted as a step size, and hence the overall complexity of sampling will scale with $\eta^{-1}$, multiplied by the batch size $n$ and other distribution-specific pre-factors.
We pause to give several remarks to elucidate the dependencies in this result.

\begin{remark}[Dimension dependence]
    The first term requires taking an inverse step size $\eta^{-1} \gg \beta_0^2$ in the Lipschitz case ($s=0$), and $\eta^{-1} \gg \beta_1 d^{1/2}$ in the smooth case ($s=1$).
    This matches state-of-the-art results for high-accuracy sampling~\citep{FanYuaChe23ImprovedProx, AltChe24Warm, chen2026high}, except that we allow for stochastic gradient queries.
\end{remark}

\begin{remark}[Proximal tolerance]
    Since the theorem already requires taking $\eta^{-1} \gg \beta_0^2 + M^2$ in the Lipschitz case, and $\eta^{-1} \gg M^2$ in the smooth case, then \cref{lem:implement-prox} (with $n=1$) implies that implementing the approximate proximal oracle with the stochastic gradient oracle only incurs a \emph{logarithmic overhead}.
\end{remark}

\begin{remark}[Accuracy dependence]
    To reach a final error of $\delta$, we need to take $\eta^{-1}$ at least of order $M^2 = \epsilon_n^{-1}(\delta)^2$.
    We elucidate this in two cases of particular interest.
    \begin{itemize}
        \item (Sub-Gaussian tails) Suppose that the stochastic gradients have sub-Gaussian tails, which corresponds to $\zeta = 2$ in \cref{example:sub-poly}.
        Then, we can take $n=1$ and $M^2 \asymp \gvar^2 \log(1/\delta)$, thus the final term requires $\eta^{-1} \gg \gvar^2 \log^2(1/\delta)$.
        Hence, this leads to a \emph{high-accuracy} guarantee.
        \item (Bounded variance) Suppose now that the stochastic gradients merely have variance bounded by $\sigma^2$.
        Since $\epsilon_1(M) \lesssim \sigma^2/M^2$, we can choose $M^2 \asymp \sigma^2/\delta$.
        Thus, the dependence on $\delta$ becomes $\eta^{-1} \gg \delta^{-1} \log(1/\delta)$.
        Although it suffices to take $n=1$ here, to avoid error accumulation in the next section we will eventually have to apply batching ($n > 1$). After doing so, the iteration complexity remains $1/\delta$ (up to logarithmic factors).

        In \cref{sec:lower}, we will show that the $1/\delta$ rate is in fact \emph{optimal} under the bounded variance assumption (\cref{prop:Ograd-lower}). Thus, high-accuracy sampling requires light-tailed stochastic gradients.
    \end{itemize}
\end{remark}

Parallel to \cref{thm:gaussian_tilt_stoc}, we show that it is also possible to sample from the Gaussian tilt distribution with only stochastic value queries, provided that the error of the stochastic value oracle is sufficiently small.

\begin{theorem}\label{thm:gaussian_tilt_stoc_0th}
   Suppose that \cref{ass:holder} holds, and $\Oeval{\cdot}$ has $\eps$-tail. Suppose that $n\geq1$ and $B=\Theta(1)$.

   Instantiate~\cref{alg:fors} as follows: 
   \begin{itemize}
       \item $q = \normal{\xhat,\eta\Id}$, where $\xhat$ is drawn from $\Oprox{\xz}$. We write $u=\frac{\xz-\xhat}{\eta}$. 
       \item $\cW_x$ is the law of $\pclip{W_{z,v,v',x}}$, where
       \begin{align}\label{eq:def-stoc-W-0th}
       W_{z,v,v',x}=v'-v-\tri{u,x-z}\,,\qquad
       z\sim q\,,\; 
       v\sim \Oevaln{x}\,,\; v'\sim \Oevaln{z}\,.
       \end{align}
   \end{itemize}
   Then, conditioned on $\nrm{u-\nabla f(\xhat)}\leq \epsprox$, the law $\wh\nu$ of~\cref{alg:fors} satisfies $\Dtv{\nu}{\wh\nu} \le \delta+C\En_{x\sim \nu}\min\crl*{\eps_n(\scedit{B/4};x),1}$, provided that
   \begin{align*}
       \eta^{-1} \gg \prn[\Big]{ \beta_s^2d^s\log(1/\delta)+\frac{s\beta_s^2}{d^{1-s}}\log^2(1/\delta) }^{1/(1+s)}+\epsprox^2\log(1/\delta).
   \end{align*}
\end{theorem}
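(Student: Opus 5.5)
The plan is to reduce to \cref{thm:fors} and then bound the error introduced by clipping and by stochasticity. First I compute the unclipped mean. With $z\sim q=\normal{\xhat,\eta\Id}$ and $\En v=f(x)$, $\En v'=f(z)$, we have
\begin{align*}
\En[W_{z,v,v',x}\mid x]=\En_{z\sim q} f(z)-f(x)-\tri{u,x}+\tri{u,\xhat}\,.
\end{align*}
Since $\log\nu(x)-\log q(x)=\tri{u,x}-f(x)+\const$, this is exactly $\log\nu-\log q$ up to an $x$-independent constant, up to the sign convention on $\tri{u,\cdot}$, which I would fix so the estimator is unbiased. Hence, without clipping, \cref{thm:fors} would return $\nu$ exactly. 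Writing $w(x)=\log(\nu/q)(x)+\const$ and $\bar w(x)=\En[\pclip{W}\mid x]$, the output of \cref{alg:fors} is $\wh\nu\propto q e^{\bar w}$. The standard argument
\begin{align*}
\Dtv{\nu}{\wh\nu}\lesssim \En_{x\sim\nu}\min\crl{\abs{w(x)-\bar w(x)},1}
\end{align*}
then reduces the problem to bounding the clipping bias $\abs{\En W-\En\pclip{W}}\le \En\trunc{W}$ on average over $x\sim\nu$. I would prove this comparison by normalizing both densities against $q$ and using $\abs{e^a-e^b}\le e^{\max(a,b)}\abs{a-b}$, capped at $1$.

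Next I decompose $W=D+\xi$ with $D\deq f(z)-f(x)-\tri{u,z-x}$ (up to sign) and $\xi\deq(v'-f(z))-(v-f(x))$. Since $\trunc{a+b}\le \trunc[B/2]{a}+\trunc[B/2]{b}$ and similarly for splitting $\xi$, I get
\begin{align*}
\En\trunc{W}\le \En\trunc[B/2]{D}+\En\trunc[B/4]{v-f(x)}+\En\trunc[B/4]{v'-f(z)}\,.
\end{align*}
By \cref{def:eps-tail} with $M=B/4$, the $v$-term is at most $(B/4)\,\eps_n(B/4;x)$. The $v'$-term is $\eps_n(B/4;z)$ with $z\sim q$; I would transfer it to $\nu$ using a bounded density ratio on the bulk, or absorb it into $\delta$ via a high-probability event. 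This produces the $C\En_\nu\min\{\eps_n(B/4;x),1\}$ term.

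The main step, and the expected obstacle, is showing $\En_{x\sim\nu,z\sim q}\trunc[B/2]{D}\le\delta$ under the stated condition on $\eta$. Write $f(z)-f(x)-\tri{u,z-x}=\tri{\nabla f(\xhat)-u,z-x}+R$, where $R$ collects the Hölder Taylor remainders around $\xhat$. The first term has magnitude $\epsprox\nrm{z-x}$, and $\nrm{z-x}$ is concentrated near $\sqrt{\eta}$ times a Gaussian in the relevant direction, using $x\sim\nu\approx q$. This gives sub-Gaussian tails of scale $\epsprox\sqrt\eta$, which is small once $\eta^{-1}\gg\epsprox^2\log(1/\delta)$. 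For $R$, I would not use the crude bound $\beta_s\nrm{z-\xhat}^{1+s}\sim\beta_s(\eta d)^{(1+s)/2}$, which is too lossy. Instead, as in the proof of \cref{thm:gaussian_tilt_stoc}, I would exploit the symmetry of $x$ and $z$, both approximately $\normal{\xhat,\eta\Id}$: write $f(z)-f(x)$ as the integral along $\gamma$ with the rotation path \eqref{eq:gamma}, taking the pair $(x-\xhat,z-\xhat)$, so that $\dot\gamma$ is a Gaussian independent of $\gamma$. Gaussian concentration of $\tri{\dot\gamma,\nabla f(\gamma)-\nabla f(\xhat)}$ then yields fluctuations of order $\sqrt{\eta}\,\beta_s(\eta d)^{s/2}\sqrt{\log(1/\delta)}$, plus the $s\beta_s^2 d^{s-1}\log^2$ correction. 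These are small exactly under the stated condition on $\eta^{-1}$. Finally, the passage from $x\sim\nu$ to $x\sim q$ in these tail computations needs a warm-start bound on $\nu/q$, which I would derive from strong log-concavity of $\nu$ relative to the Gaussian together with $\nrm{u-\nabla f(\xhat)}\le\epsprox$. This transfer is the delicate point.
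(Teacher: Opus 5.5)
You have the right ingredients: FORS, splitting $W$ into its deterministic part and the two oracle-noise terms (each handled by $\trunc[B/4]{\cdot}$ with $M=B/4$), and the rotation path on $(x-\xhat,z-\xhat)$ followed by \cref{lem:holder-tail}. You are also right that the sign of $\tri{u,x-z}$ in $W$ must be flipped for its mean to equal $\log\nu-\log q+\const$.

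The gap is in how you compare $\nu$ with $\wh\nu$. The reduction $\Dtv{\nu}{\wh\nu}\lesssim\En_{x\sim\nu}\min\{\abs{w-\bar w},1\}$ is false in general. Take $w=\bar w=0$ off a set $S$ with $q(S)=p$, and $w=-L\ll -B$ on $S$, where clipping forces $\bar w\ge -B$. Then $\wh\nu(S)\ge e^{-2B}p$ while $\nu(S)\lesssim e^{-L}p$. So the TV is of order $p$, but your right-hand side is only of order $e^{-L}p$. The $\abs{e^a-e^b}$ computation you sketch actually produces an extra term $\En_{x\sim\wh\nu}\min\{\abs{w-\bar w},1\}$.

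This $\wh\nu$ side is exactly what the paper exploits. \cref{lem:Renyi-to-diff}, with $\wh\nu$ as the reference measure, gives $\Dtv{\nu}{\wh\nu}\le\En_{x\sim\wh\nu}[e^{2V(x)}-1]$, and clipping gives $d\wh\nu/dq\le e^{2B}$ for free. After Jensen moves the exponential inside the $z$-average, everything is an expectation over $x,z\sim q$ i.i.d. There the rotation path makes $(\gamma_r,\dot\gamma_r)$ exactly independent Gaussians. So no bound on $\nu/q$ is needed for the $\Delta_{x,z}$ term. The price is that you must control exponential moments $\En e^{\lambda\trunc[B/2]{\Delta_{x,z}}}$ rather than first moments, and \cref{lem:holder-tail} supplies these.

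Your proposed transfer between $\nu$ and $q$ also fails as described. \cref{ass:holder} does not assume $f$ convex, so $\nu$ need not be log-concave relative to $q$; for $s<1$ it need not be log-concave at all. Even for convex $f$, a pointwise bound on $\nu/q$ built from first-order and H\"older bounds around $\xhat$ loses a normalization factor $\exp(\beta_s(\eta d)^{(1+s)/2})$. The stated condition on $\eta$ does not make this small: for $s=1$ it permits $\beta_1\eta d\approx\sqrt d$. This is the same crude bound you rightly rejected for $R$.

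The only change of measure the paper needs is moving $\eps_n(B/4;\cdot)$ from $q$ to $\nu$, and it does this with R\'enyi divergences. Write $\overline W_{r,z,x}=\tri{\dot\gamma_{z,r}(x),u-\nabla f(\gamma_{z,r}(x))}$. By \cref{lem:Renyi-to-diff}, $1+\Dren[\ell]{q}{\nu}\le\En_{x\sim q}\En_{r,z}e^{2\ell\abs{\overline W_{r,z,x}}}$. The same rotation-path Gaussian computation bounds this by $2\exp\bigl(O(\ell^2\eta(\epsprox^2+\beta_s^2(\eta d)^s))\bigr)$. Taking $\ell$ of order $\log(1/\delta)$, \cref{lem:Renyi-to-cov} then yields $\En_q[g]\le e\En_\nu[g]+\delta$ for $g\in[0,1]$ (\cref{cor:tilt-coverage}).

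If you prefer to keep a two-sided $\min\{\cdot,1\}$ reduction, the same R\'enyi argument in the direction $\Dren[\ell]{\nu}{q}$ (also bounded there) repairs your $\nu$-side term. Log-concavity or an $L^\infty$ warm start is not the right tool.
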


We note that in general, implementing the proximal oracle with only noisy queries will incur additional computational cost. However, with the choice of $\xhat=\xz$, it is trivially guaranteed that $\epsprox=\nrm{\nabla f(\xz)}$.
In this case, as we will see in \cref{thm:prox_0th} below, the term $\epsprox^2$ in fact dominates the complexity.

\subsection{Log-concave sampling}\label{ssec:log-concave}

To apply our results to log-concave sampling (and beyond), we apply the results of the previous section to the proximal sampler algorithm~\citep{LeeSheTia21RGO, Chen+22ProxSampler}.
Given a target distribution $\mu \propto e^{-f}$, the proximal sampler aims to sample from the augmented distribution
\begin{align*}
    \bar\pi(x,y) \propto \exp\prn[\Big]{-f(x) -\frac{1}{2\eta}\,\nrm{y-x}^2}\,.
\end{align*}
It does so by applying Gibbs sampling to $\bar\pi$.
Concretely, for $n=0,1,2,\dotsc$ and an initial point $X_0 \sim \mu_0$, repeat:
\begin{enumerate}
    \item Sample $Y_n \sim \bar \pi^{Y|X=X_n} = \normal{X_n,\eta I}$.
    \item Sample $X_{n+1} \sim \bar \pi^{X|Y=Y_{n}}$.
\end{enumerate}
The distribution $\bar\pi^{X|Y=y}$ is known as the \emph{restricted Gaussian oracle} (RGO), and it is exactly the Gaussian tilt distribution~\eqref{eq:gaussian-tilt} with $x_0=y$.
We therefore combine our result in \cref{thm:gaussian_tilt_stoc} for implementing the RGO, together with existing results on the convergence of the proximal sampler itself, to deduce the following sampling corollaries.
We begin by recalling the definitions of functional inequalities.

\begin{definition}[Poincar\'e]
    A distribution $\pi$ satisfies a \emph{Poincar\'e inequality (PI)} with constant $C$ if for all compactly supported and smooth test functions $h : \R^d\to\R$,
    \begin{align*}
        \Var_{X\sim \pi}(h(X)) \le C\En_{X\sim\pi}[\nrm{\nabla h(X)}^2]\,.
    \end{align*}
    We let $\CPI(\pi)$ be the smallest constant $C$ such that $\pi$ satisfies PI with constant $C$.
\end{definition}

\begin{definition}[Log-Sobolev]
    A distribution $\pi$ satisfies a \emph{log-Sobolev inequality (LSI)} with constant $C$ if for all compactly supported and smooth test functions $h : \R^d \to \R$,
    \begin{align*}
        \mathrm{Ent}_{X\sim \pi}(h^2(X))
        \deq \E_{X\sim \pi}\brk[\Big]{h^2(X) \log \frac{h^2(X)}{\E_{X\sim \pi}[h^2(X)]}} \le 2C\En_{X\sim\pi}[\nrm{\nabla h(X)}^2]\,.
    \end{align*}
    We let $\CLSI(\pi)$ be the smallest constant $C$ such that $\pi$ satisfies LSI with constant $C$.
\end{definition}

It is well-known~\citep[e.g.,][]{BGL14} that if $\pi$ is $\alpha$-strongly log-concave (SLC), i.e., $-\log\pi$ is $\alpha$-strongly convex, then it satisfies LSI with constant $1/\alpha$, and if $\pi$ satisfies LSI with constant $1/\alpha$, then it satisfies PI with constant $1/\alpha$.
These represent meaningful enlargements of the class of SLC measures which still allow for tractable sampling. For example, unlike SLC, LSI is robust to bounded perturbations of the log-density; and unlike LSI\@, PI allows for capturing measures without sub-Gaussian tails (e.g., the two-sided exponential).
See~\citet{Chewi26Book} for further background in the context of sampling.

We now present a suite of results by combining \cref{thm:gaussian_tilt_stoc} (\cref{thm:gaussian_tilt_stoc_0th}) with the guarantees of the proximal sampler~\citep{Chen+22ProxSampler}. 
Let $\phi_M(\delta) \deq \inf\{n\ge 1 : \epsilon_n(M) \le \delta/(10C)\}$. We note that this can be relaxed to the ``in-distribution error'':
\begin{align*}
    \phi_{M,N}(\delta) \deq \inf\crl[\bigg]{n\ge 1 : \frac{1}{N}\sum_{k=1}^N \En_{x\sim \mu_k}\min\crl{\epsilon_n(M;x), 1} \le \delta/(10C)}\,,
\end{align*}
where  $\mu_k$ is the distribution of the $X_k$ in the exact proximal sampler.

\begin{theorem}\label{thm:prox_1st}
    Suppose that \cref{ass:holder} holds \scedit{for some $s \in [0,1]$}, and that $\Ograd{\cdot}$ has $\epsilon$-tail. Suppose that we are given an initial distribution $\mu_0$ such that $\log(1+\Dchis{\mu_0}{\mu})\leq \Delta$. 

    Choose
    \begin{align}\label{eq:eta-1st}
        \frac{1}{C\eta} = \prn[\big]{\beta_s^2 d^s\log(N/\delta) + \beta_s^2\log^2(N/\delta)}^{1/(1+s)}+M^2\log(N/\delta)
    \end{align}
    for a sufficiently large universal constant $C > 0$.
    Let $\wh\mu$ denote the law of the output of the proximal sampler initialized at $\mu_0$, where in each step the RGO is implemented by \cref{thm:gaussian_tilt_stoc}.
    Then, the proximal sampler ensures $\Dtv{\wh\mu}{\mu}\leq \delta$ using at most $N\phi_M(\delta/N) \log A$ queries to $\Ograd{\cdot}$ in the following situations. 

    \begin{enumerate}
        \item Suppose that $\mu$ satisfies a log-Sobolev inequality with constant $\CLSI(\mu) < \infty$ and $s = 1$ (i.e., $f$ is smooth).
        Then, 
        \begin{align*}
            N \lesssim \CLSI(\mu)\,\prn[\big]{ \beta_1 d^{1/2} \log^{3/2} A + (\beta_1+M^2) \log^2A }\,,
        \end{align*}
        where $A \deq d + \Delta + \delta^{-1}+\CLSI(\mu)\,(\beta_1+M^2)$.
        \item Suppose that $\mu$ satisfies a Poincar\'e inequality with constant $\CPI(\mu) < \infty$.
        Then, 
        \begin{align*}
            N \lesssim \CPI(\mu)\,\prn[\big]{ \prn{\beta_s^2d^s\log A+\beta_s^2\log^2 A}^{1/(1+s)}+M^2\log A }\,(\Delta+\log(1/\delta))\,,
        \end{align*}
        where $A \deq d + \Delta + \delta^{-1}+\CPI(\mu)\,(\beta_s^{2/(1+s)}+M^2)$.
        \item Suppose that $\mu$ is log-concave.
        Then,
        \begin{align*}
            N \lesssim \prn[\big]{\prn[\big]{\beta_s^2d^s\log A+\beta_s^2\log^2 A}^{1/(1+s)}+M^2\log A}\cdot \frac{W_2^2(\mu_0,\mu)}{\delta^2}\,,
        \end{align*}
        where $A \deq d + \Delta + \delta^{-1} + (\beta_s^{2/(1+s)} + M^2)\, W_2^2(\mu_0,\mu)$.
    \end{enumerate}
\end{theorem}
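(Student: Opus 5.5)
The plan is to combine a per-step coupling bound for the inexact RGO with the known convergence rates of the exact proximal sampler from \citet{Chen+22ProxSampler}. Throughout, let $\mu_k$ denote the law of $X_k$ under the exact proximal sampler and $\wh\mu_k$ its law under the implemented one.

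\textbf{Step 1: error accumulation.} Each iteration applies the Gaussian step $\normal{\cdot,\eta\Id}$ followed by the implemented RGO $\wh\nu_{y}$. Data processing gives
\begin{align*}
\Dtv{\wh\mu_{k+1}}{\mu_{k+1}} \le \Dtv{\wh\mu_k}{\mu_k} + \En_{y\sim \mu_k * \normal{0,\eta\Id}} \Dtv{\wh\nu_y}{\bar\pi^{X|Y=y}}.
\end{align*}
We bound the per-step term by \cref{thm:gaussian_tilt_stoc} with confidence level $\delta/(4N)$. The choice \eqref{eq:eta-1st} meets the step-size condition of that theorem. For this, note that $s\beta_s^2 d^{s-1}\log^2 \le \beta_s^2\log^2$, and $\epsprox^2\lesssim (m_s+M)^2$ is absorbed into the first and last terms, since $m_s^2=\beta_s^{2/(1+s)}$.

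The proximal point $\xhat$ is produced via \cref{lem:implement-prox}, which fails with probability at most $\eps_n(M)$. This contributes an extra TV term and a $\log(G/(m_s+M))$ factor in queries, which becomes the $\log A$ factor. Averaging the conditional output over $x\sim\nu$ and then over $y$ gives $\En_{x\sim\mu_{k+1}}\min\{\eps_n(M;x),1\}$. Choosing $n=\phi_M(\delta/N)$ (or $\phi_{M,N}$) makes the total accumulated error at most $\delta/2$.

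\textbf{Step 2: query count.} By \cref{thm:fors}, over $N$ calls with $B=\Theta(1)$ the number of $W_j$'s is $O(N+\log(1/\delta))$ with high probability. Each $W_j$ costs $n$ oracle queries, and each prox call costs $O(n\log A)$. The total is therefore $N\phi_M(\delta/N)\log A$, after absorbing constants and the small failure probability into $\delta$.

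\textbf{Step 3: convergence of the exact sampler, case by case.} The goal is $\Dtv{\mu_N}{\mu}\le\delta/2$.
\begin{itemize}
\item Under LSI, the proximal sampler contracts KL (or chi-square/Rényi) by a factor $(1+\eta/\CLSI)^{-2}$ per step. Starting from Rényi divergence at most $\Delta$, it suffices to take $N\asymp (\CLSI/\eta)\log(\Delta/\delta)$. When $s=1$, expanding $1/\eta$ gives $\beta_1 d^{1/2}\log^{1/2}+\beta_1\log+M^2\log$. Multiplying by the additional $\log$ factors yields the stated bound.
\item Under PI, chi-square contracts, so $N\asymp (\CPI/\eta)(\Delta+\log(1/\delta))$.
\item In the log-concave case, the rate $\mathrm{KL}\lesssim W_2^2/(N\eta)$ combined with Pinsker gives $N\asymp W_2^2/(\eta\delta^2)$.
\end{itemize}

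\textbf{Main obstacle.} The delicate point is the circularity of logarithms: $\log(N/\delta)$ appears in $\eta$, which in turn determines $N$. We resolve this by showing that $N\le \poly(A)$, so that $\log(N/\delta)\lesssim\log A$. A secondary difficulty is verifying the hypothesis $\nrm{\nabla f(x_0)}\le G$ for the prox oracle with $\log G\lesssim\log A$, uniformly along the trajectory. This follows from high-probability gradient bounds under $\mu_k$, obtained via the warm start $\Delta$ and the functional inequality.
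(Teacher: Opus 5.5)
Your proposal follows essentially the same route as the paper. You sum the per-step RGO errors along the exact chain (the paper does this via \cref{lem:TV-chain}), implement the prox via \cref{lem:implement-prox} and the RGO via \cref{thm:gaussian_tilt_stoc} under the choice \eqref{eq:eta-1st}, set $n=\phi_M(\cdot)$, and plug in the LSI/PI/log-concave rates of \citet{Chen+22ProxSampler}. Your explicit FORS query count and your $N\le\poly(A)$ resolution of the logarithmic circularity are details the paper leaves implicit.

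One correction: the gradient bound along the trajectory does not come from a functional inequality, and none is assumed in the log-concave case. It comes from the smoothness-based bound $\En_{\mu}e^{\tri{v,\nabla f}}\le e^{\beta_s\nrm{v}^{1+s}}$ averaged over Gaussian $v$ (\cref{lem:nrm_grad}), combined with $\Dchis{\mu_n}{\mu}\le\Dchis{\mu_0}{\mu}$. That last inequality holds by data processing, since $\mu$ is stationary for the exact kernel.
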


\begin{remark}[Dependence on $\delta$]\label{rmk:delta-rate}
    As an illustration, we describe the implied query complexity in the following special cases.
    \begin{itemize}
        \item If the stochastic gradients have subexponential tails (\cref{example:sub-poly} with $\zeta=1$), then for $M \ge \gvar$, we can take $\phi_M(\delta) \lesssim (\gvar/M)\log(1/\delta)$.
        Therefore, we can take $M \asymp \gvar$ in all of the results above, and the iteration complexity equals $\wt{O}(N)$. 
        For example, in the smooth LSI case, the query complexity reads $\wt{O}(\CLSI(\beta_1 d^{1/2}+\gvar^2))$.
        \item On the other hand, in the bounded variance case (\cref{example:poly} with $k=1$), we can take $\phi_M(\delta) \lesssim \sigma_2^2/(\delta M^2)$, and the total query complexity becomes $(N\vee N^2 \sigma_2^2/(\delta M^2)) \log A$.
        We then choose $M$ to balance the terms. For example, in the smooth LSI case, the query complexity reads
        \begin{align*}
            N\log A + \CLSI \brk[\big]{(\beta_1^2 d/M^2) \log^3A + M^2\log^4A} \,(\sigma_2^2 \log A)/\delta\,,
        \end{align*}
        This leads to a total query complexity of $\wt O(\kappa d^{1/2}\, (1+ \CLSI \sigma_2^2/\delta))$, where $\kappa=\CLSI \beta_1$ is the condition number.
    \end{itemize}
\end{remark}

We emphasize that while sampling guarantees with stochastic gradients are well-studied~\citep[e.g.,][]{Dal17Analogy, DalKar19Friendly, DurMajMia19LMCCvxOpt, Bal+22NonLogConcave, Huang+24SGProx, LuYeZho25StochGrad}, our contribution is to provide \emph{high-accuracy} guarantees, provided that the stochastic gradients have light tails.

In the next section, we show that the assumption on the tails of the stochastic gradient is necessary.

We also provide a corresponding result for stochastic value queries.

\begin{theorem}\label{thm:prox_0th}
    Suppose that \cref{ass:holder} holds and that $\Oeval{\cdot}$ has $\epsilon$-tail. Suppose that we are given an initial distribution $\mu_0$ such that $\log(1+\Dchis{\mu_0}{\mu})\leq \Delta$. 

    Choose
    \begin{align}\label{eq:eta-0th}
        \frac{1}{C\eta} = (\beta_sd^s)^{2/(1+s)}\,\prn[\Big]{1+\frac{\Delta+\log(N/\delta)}{d}}\log(N/\delta)
    \end{align}
    for a sufficiently large universal constant $C > 0$.
    Let $\wh\mu$ denote the law of the output of the proximal sampler initialized at $\mu_0$, where in each step the RGO is implemented by \scedit{\cref{thm:gaussian_tilt_stoc_0th}}.
    Then, the proximal sampler ensures $\Dtv{\wh\mu}{\mu}\leq \delta$ using at most $N\phi_{1}(\delta/(4N))$ queries to $\Oeval{\cdot}$ in the following situations. 

    \begin{enumerate}
        \item Suppose that $\mu$ satisfies a log-Sobolev inequality with constant $\CLSI(\mu) < \infty$ and $s = 1$ (i.e., $f$ is smooth).
        Then, 
        \begin{align*}
            N \lesssim \CLSI(\mu)\,\beta_1\, (d+\Delta+\log A)\log^2 A\,,
        \end{align*}
        where $A \deq d + \Delta + \delta^{-1}+\CLSI(\mu)\,\beta_1$.
        \item Suppose that $\mu$ satisfies a Poincar\'e inequality with constant $\CPI(\mu) < \infty$.
        Then, 
        \begin{align*}
            N \lesssim \CPI(\mu)\,(\beta_sd^s)^{2/(1+s)}\,\prn[\Big]{1+\frac{\Delta+\log A}{d}}\,(\Delta+\log(1/\delta))\log A\,,
        \end{align*}
        where $A \deq d + \Delta + \delta^{-1}+\CPI(\mu)\,\beta_s^{2/(1+s)}$.
        \item Suppose that $\mu$ is log-concave.
        Then,
        \begin{align*}
            N \lesssim (\beta_sd^s)^{2/(1+s)}\,\prn[\Big]{1+\frac{\Delta+\log A}{d}}\log A\cdot \frac{W_2^2(\mu_0,\mu)}{\delta^2}\,,
        \end{align*}
        where $A \deq d + \Delta + \delta^{-1} + \beta_s^{2/(1+s)}  W_2^2(\mu_0,\mu)$.
    \end{enumerate}
\end{theorem}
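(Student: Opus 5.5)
The argument follows the template of \cref{thm:prox_1st}. It combines the outer convergence of the exact proximal sampler with a per-step RGO error bound from \cref{thm:gaussian_tilt_stoc_0th}, and glues the two together through the data-processing inequality in total variation.

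\textbf{Coupling.} Let $\mu_k$ denote the law of the exact proximal sampler after $k$ steps and $\wh\mu_k$ the law of the implemented chain. Coupling the two chains step by step gives
\[
\Dtv{\wh\mu_N}{\mu_N}\le \sum_{k<N}\En_{y\sim \mu_k * \normal{0,\eta I}}\Dtv{\nu_y}{\wh\nu_y},
\]
where $\nu_y=\bar\pi^{X|Y=y}$ is the RGO. The approximate RGO is implemented with $\xhat=\xz=y$, so no prox oracle call is needed and $\epsprox=\nrm{\nabla f(y)}$.

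\textbf{Per-step error.} The main issue is that \cref{thm:gaussian_tilt_stoc_0th} requires $\eta^{-1}\gg \epsprox^2\log(N/\delta)$, and $\epsprox$ is now random. I would bound $\nrm{\nabla f(Y_k)}$ with high probability under $Y_k\sim \mu_k*\normal{0,\eta I}$. Under the stationary law $\mu*\normal{0,\eta I}$, the standard estimate is $\En_\mu\nrm{\nabla f}^2\le \beta_s^{2/(1+s)}d^{s\cdot 2/(1+s)}$ up to constants, obtained via integration by parts and Hölder smoothness. Adding the Gaussian perturbation costs $\beta_s(\eta d)^{s/2}$, which is lower order. To get sub-Gaussian concentration of $\nrm{\nabla f}$, which is $\beta_s$-Hölder, I would use the Lipschitz-type concentration available under the functional inequality, or more crudely the smoothness together with concentration of $\mu$. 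This yields $\nrm{\nabla f(y)}^2\lesssim (\beta_s d^s)^{2/(1+s)}(1+\log(N/\delta)/d)$ with probability $1-\delta/(4N)$ under stationarity.

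To transfer this to $\mu_k$, I would use Cauchy--Schwarz with the chi-square bound: by data processing, $\log(1+\Dchis{\mu_k}{\mu})\le\Delta$ for all $k$. Hence $\Pr_{\mu_k}(E)\le \sqrt{(1+\chi^2)\Pr_\mu(E)}$, so demanding $\Pr_\mu(E)\le e^{-\Delta}(\delta/4N)^2$ produces the factor $(\Delta+\log(N/\delta))/d$ in \eqref{eq:eta-0th}. This is the step I expect to be the main obstacle, namely getting the exact form $(1+(\Delta+\log)/d)$ with the clean constants. On the good event, \eqref{eq:eta-0th} satisfies the step-size condition, since the first term of that condition is dominated by $(\beta_sd^s)^{2/(1+s)}\log$.

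\textbf{Tail term and query count.} Taking $B=4$ and batch size $n=\phi_1(\delta/(4N))$, the tail term is at most $C\cdot\delta/(40CN)$. Each step then incurs error $\delta/(4N)+\delta/(4N)+$ the bad-event probability, for a total of at most $3\delta/4$. The remaining $\delta/4$ is covered by choosing $N$ from the known proximal sampler rates of \citet{Chen+22ProxSampler}, which give contraction of KL, $\chi^2$, or $W_2$-based TV at rates $(1+\eta/C)^{-2k}$ under LSI/PI and $W_2^2/(k\eta)$ in the log-concave case. Plugging in $\eta$ yields the three bounds on $N$. For the LSI case I use $s=1$ and convert the KL bound $\Delta$ via Pinsker, and in all cases the circular dependence of $\log(N/\delta)$ on $N$ is resolved by replacing it with $\log A$. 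Since FORS uses $O(1)$ expected estimator draws per step by \cref{thm:fors}, each using $2n$ oracle calls, the total query count is $N\phi_1(\delta/(4N))$ up to constants.
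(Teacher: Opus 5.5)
Your architecture matches the paper's. The shared pieces are TV sub-additivity along the chain, taking $\xhat=\xz$ so that $\epsprox$ is the gradient norm at the center, a $\chi^2$ transfer from $\mu$ to $\mu_k$ via data processing, and $B=4$ with $n=\phi_1(\delta/(4N))$ before plugging into the proximal-sampler rates. Your handling of $Y_k$ versus $X_k$ via the $\beta_s(\eta d)^{s/2}$ perturbation is also fine.

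The genuine gap is the step you treat as standard. You need a tail bound for $\nrm{\nabla f}$ under $\mu$ whose sub-Gaussian scale is $\beta_s^{2/(1+s)}d^{-(1-s)/(1+s)}$, depending only on $\beta_s$ and $d$. The tools you name do not give this:
\begin{itemize}
\item Under LSI, $\nrm{\nabla f}$ is $\beta_1$-Lipschitz, so Herbst's argument gives variance proxy $\CLSI\beta_1^2=\kappa\beta_1$ with $\kappa=\CLSI\beta_1$, not $\beta_1$. After the $\chi^2$ transfer you only get $\epsprox^2\lesssim\beta_1 d+\kappa\beta_1(\Delta+\log(N/\delta))$. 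This does not justify \eqref{eq:eta-0th}, and in case 1 it inflates the $\Delta+\log A$ part of $N$ by a factor $\kappa$.
\item Under PI you only get sub-exponential tails at scale $\sqrt{\CPI}\,\beta_1$.
\item In case 3 the statement allows no functional-inequality constant to enter at all.
\item For $s<1$, $\nrm{\nabla f}$ is merely $s$-H\"older. Lipschitz concentration then does not apply, and neither does the integration-by-parts identity $\En_\mu\nrm{\nabla f}^2=\En_\mu[\mathrm{tr}\,\nabla^2 f]$ you invoke for the mean.
\item The ``crude'' route $\nrm{\nabla f(x)}\le\beta_s\nrm{x-x^\star}^s$ imports the spread of $\mu$ and again costs a condition-number factor.
\end{itemize}
The $\chi^2$ transfer you flagged as the main obstacle is in fact routine; the real difficulty is the stationary bound itself.

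The missing idea is \cref{lem:nrm_grad}, which uses only H\"older smoothness and translation invariance of Lebesgue measure. Since $f(x+v)\le f(x)+\tri{v,\nabla f(x)}+\beta_s\nrm{v}^{1+s}$ and $\int e^{-f(x+v)}\,dx=\int e^{-f(x)}\,dx$, one gets $\En_\mu e^{-\tri{v,\nabla f(X)}}\le e^{\beta_s\nrm{v}^{1+s}}$ for every $v\in\RR^d$. Averaging over $v\sim\normal{0,m\Id}$ and applying \cref{lem:Gaussian-s} with $m^{1+s}=d^{1-s}/(16\beta_s^2)$ yields $\En_\mu\exp(\frac m2\nrm{\nabla f(X)}^2)\le e^{d/2}$. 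That is, $\nrm{\nabla f}$ has sub-Gaussian tails at scale $m^{-1}\asymp\beta_s^{2/(1+s)}d^{-(1-s)/(1+s)}$, with no dependence on $\CLSI$, $\CPI$, or even log-concavity. Cauchy--Schwarz against $1+\Dchis{\mu_k}{\mu}\le e^{\Delta}$ then gives $\nrm{\nabla f}^2\lesssim m^{-1}(\Delta+d+\log(N/\delta))\asymp(\beta_sd^s)^{2/(1+s)}\bigl(1+(\Delta+\log(N/\delta))/d\bigr)$ with probability $1-\delta/(10N)$ under every $\mu_k$. This is exactly the $\epsprox^2$ that \eqref{eq:eta-0th} is designed to absorb.
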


Note that under noisy value queries of sub-Gaussian tail (\cref{example:sub-poly}), it holds that $\eps_n(1)\leq e^{-n/\gvar^2}$ for $n\gg \gvar^2$, and hence $\phi_1(\delta)=O(\gvar^2\log(1/\delta)+1)$. 
Thus, assuming that $f$ is $\alpha$-strongly convex and $\beta$-smooth, the query complexity (roughly) scales as $\wt O(\kappa d\cdot \fcedit{\max\{\gvar^2,1\}})$, where $\kappa=\beta/\alpha$ is the condition number.
By reduction to zeroth-order optimization, it is expected that in this setting, sublinear dependence on $d$ cannot be achieved.

\subsection{Application: finite-sum sampling}\label{ssec:finite_sum}

\newcommand{\prox}{\mathsf{prox}}
\newcommand{\Xbar}{\wb{X}}
\newcommand{\Xhat}{\wh{X}}
\newcommand{\nubar}{\wb{\nu}}

We now consider a slightly more abstract formulation of the finite-sum sampling problem. For simplicity, we focus on the smooth setting.

\begin{assumption}\label{asmp:smooth-stoc}
The function $f$ takes the form $f(x)=\En_{w\sim P} F(x;w)$, and computing $\nabla F(x;w)$ requires unit cost. Furthermore, $\nrm{\nabla F(x;w)-\nabla F(x';w)}\leq \beta_1\,\nrm{x-x'}$ for all $w$ and $x,x'\in\RR^d$.
\end{assumption}

\begin{theorem}\label{thm:finite-sum}
Suppose that \cref{asmp:smooth-stoc} holds and that the initial distribution $\mu_0$ satisfies $\log(1+\Dchis{\mu_0}{\mu})\leq \Delta$. Consider implementing the proximal sampler as follows.

\begin{itemize}
    \item Initialize $X_0\sim \mu_0$.
    \item For each $k\geq 0$, sample $Y_k\sim \normal{X_k,\eta\Id}$.
    \item If $k\bmod K = 0$, query $\Xhat_{k+1} \sim \Oprox{Y_k}$ and compute $\nabla f(X_k)$. Otherwise, set $\Xhat_{k+1}\deq \Xhat_{m(k)\scedit{+1}}+Y_k-Y_{m(k)}$, where $m(k)\deq K\floor{k/K}$.
    \item Let $O_{k+1}(x)$ denote the distribution of $\nabla F(x;w)-\nabla F(X_{m(k)};w)+\nabla f(X_{m(k)})$ under $w\sim P$.
    \item Instantiate \cref{thm:gaussian_tilt_stoc} with oracle $O_{k+1}$ and center $\Xhat_{k+1}$ to generate $X_{k+1}$.
\end{itemize}
Then, assuming that each call to the proximal oracle $\Oprox{\cdot}$ succeeds with probability at least $1-\delta$ and
\begin{align}\label{eq:finite-sum-eta}
    \frac{1}{\beta_1\eta}\gg \sqrt{Kd}+K^{2/3}(d+\Delta+\log(K/\delta))^{1/3}+(\epsprox^2/\beta_1+1)\log(K/\delta),
\end{align}
the distribution $\muhat_N$ of our algorithm satisfies $\Dtv{\mu_N}{\muhat_N}\leq N\delta$.
\end{theorem}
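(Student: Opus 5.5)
We compare, step by step, the idealized proximal sampler $(X_k)$ (exact RGO draws, law $\mu_k$) with our algorithm, using a coupling. Assume that up to step $k$ the two chains agree, and that both use the same $Y_k$. Then $\Dtv{\mu_N}{\muhat_N}$ is at most the sum over $k<N$ of the per-step TV errors of the RGO implementation. It therefore suffices to show that each step errs by at most $O(\delta)$, counting the failure probability of the proximal oracle. Concretely, the plan is to apply \cref{thm:gaussian_tilt_stoc} at step $k$ with oracle $O_{k+1}$, center $\xhat=\Xhat_{k+1}$, $x_0=Y_k$, and $s=1$. We must verify two hypotheses: (i) the center satisfies $\nrm{(Y_k-\Xhat_{k+1})/\eta-\nabla f(\Xhat_{k+1})}\le \epsprox'$ for a suitable $\epsprox'$; (ii) the tail term $\En_{x\sim\nu}\min\{\eps(M;x),1\}$ for the control-variate oracle $O_{k+1}$ is at most $\delta$ for some $M$ with $M^2\log(1/\delta)\ll\eta^{-1}$.

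For (i), at a reset step $k=m(k)$ this is exactly the proximal oracle guarantee. For $k>m:=m(k)$ we have $(Y_k-\Xhat_{k+1})/\eta=(Y_m-\Xhat_{m+1})/\eta$. Hence the residual is the step-$m$ residual plus $\nabla f(\Xhat_{m+1})-\nabla f(\Xhat_{k+1})$, whose norm is at most $\beta_1\nrm{Y_k-Y_m}$. So we need a bound on $\nrm{Y_k-Y_m}$ over $K$ steps. The displacement $Y_k-Y_m$ is a sum of proximal-sampler increments. Under the exact chain started from a warm law, each increment has size about $\sqrt{\eta d}$. However, they are not independent, so I would use the drift-plus-noise structure. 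We write $Y_{j+1}-Y_j=(X_{j+1}-Y_j)+\sqrt\eta\,\xi_j$. The Gaussian sum contributes $\sqrt{\eta}\sqrt{Kd+\log(1/\delta)}$ with high probability. The RGO drift $X_{j+1}-Y_j$ is about $-\eta\nabla f(Y_j)$ plus fluctuation $\sqrt{\eta d}$ with a mean-zero part. Summing the drift over $K$ steps is controlled by bounding $\nrm{\nabla f}$ along the chain via warmness, i.e., by a change of measure using $\Dchis{\mu_k}{\mu}\le e^\Delta$ and $\En_\mu\nrm{\nabla f}^2\le\beta_1 d$. This gives drift at most $K\eta\sqrt{\beta_1(d+\Delta+\log(K/\delta))}$. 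Requiring $\beta_1\nrm{Y_k-Y_m}\lesssim \eta^{-1/2}$ then produces the conditions $\beta_1\eta\sqrt{Kd}\ll1$ and the $K^{2/3}(d+\Delta+\log)^{1/3}$ term, after balancing the drift sum against the residual budget. It also gives $\epsprox'^2\lesssim \epsprox^2+\beta_1$.

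For (ii), $O_{k+1}(x)-\nabla f(x)$ has norm at most $2\beta_1\nrm{x-X_m}$ by \cref{asmp:smooth-stoc}, so it is almost surely bounded. We take $M=2\beta_1 R$, where $R$ is a high-probability bound on $\nrm{x-X_m}$ for $x\sim\nu_k$. Then the $\eps$-tail vanishes except on the event $\nrm{x-X_m}>R$. This event has probability at most $\delta/N$ by the same displacement analysis, together with the RGO concentration $\nrm{x-Y_k}\lesssim\sqrt{\eta(d+\log)}$. The requirement $M^2\log(K/\delta)\ll\eta^{-1}$ becomes $\beta_1^2R^2\eta\log\ll1$ with $R^2\approx\eta(Kd+\ldots)$. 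This is again implied by \eqref{eq:finite-sum-eta}, and the remaining requirement $\eta^{-1}\gg\beta_1\sqrt d\log$ from \cref{thm:gaussian_tilt_stoc} is also covered.

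The main obstacle is the uniform, high-probability control of $\nrm{Y_k-Y_{m(k)}}$ and $\nrm{X_{k+1}-X_{m(k)}}$ across an epoch under the true chain. The delicate part is the drift term, which needs warmness propagated via data processing ($\chi^2$ to $\mu$ is nonincreasing along the proximal sampler). I would first try a union bound over the $K$ steps of per-step bounds for $\nabla f(Y_j)$ under $\mu_j$, converted from $\mu$ through Cauchy–Schwarz with the $\chi^2$ warmness. This is what yields the $\Delta$ and $\log(K/\delta)$ dependence. Finally, summing the per-step errors (at most $\delta$ from \cref{thm:gaussian_tilt_stoc}, plus oracle failure and bad events) gives $N\delta$ after rescaling constants.
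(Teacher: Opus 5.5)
Your overall architecture matches the paper's. That includes sub-additivity of TV over steps (\cref{lem:TV-chain}); the step-$k$ residual written as the reset residual plus $\nabla f(\wh X_{m+1})-\nabla f(\wh X_{k+1})$, of norm at most $\beta_1\nrm{Y_k-Y_m}$; the deterministic bound $2\beta_1\nrm{x-X_m}$ on the control-variate noise with $M=2\beta_1R$; and a displacement radius $R^2\approx K\eta(d+\log(K/\delta))+K^2\eta^2\beta_1(d+\Delta+\log(K/\delta))$.

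The step that fails as you describe it is the control of the drift sum. From $\En_\mu\nrm{\nabla f}^2\le\beta_1 d$ and a Cauchy--Schwarz change of measure with $1+\Dchis{\mu_j}{\mu}\le e^\Delta$ you only get Chebyshev-type tails, $\Pr_{\mu_j}(\nrm{\nabla f}\ge t)\le e^{\Delta/2}\sqrt{\beta_1 d}/t$. A per-step failure probability $\delta/K$ then forces $t\gtrsim e^{\Delta/2}\sqrt{\beta_1 d}\,K/\delta$, which is exponential in $\Delta$ and polynomial in $K/\delta$. Fed into $M^2\log(1/\delta)\ll\eta^{-1}$, this destroys the $K^{2/3}(d+\Delta+\log(K/\delta))^{1/3}$ condition and with it the high-accuracy claim.

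What is missing is sub-Gaussian integrability of the gradient under $\mu$. Smoothness gives $\int e^{-f(x+v)}\,dx\ge\int e^{-f(x)-\langle v,\nabla f(x)\rangle-\beta_1\nrm{v}^2}\,dx$, hence $\En_\mu e^{\langle v,\nabla f\rangle}\le e^{\beta_1\nrm{v}^2}$. Averaging over $v\sim\normal{0,m\Id}$ with $m\asymp 1/\beta_1$ yields $\En_\mu e^{m\nrm{\nabla f}^2/2}\le e^{O(d)}$. Only then does Cauchy--Schwarz against the $\chi^2$ warmness, followed by Markov, give $\nrm{\nabla f(X_j)}^2\lesssim\beta_1(d+\Delta+\log(K/\delta))$ with probability $1-\delta/K$. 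This is \cref{lem:nrm_grad}, and it uses only smoothness.

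Beyond that, your direct drift-plus-martingale treatment of $Y_k-Y_m$ under the true chain differs from the paper's route. The paper changes measure to a surrogate chain in which each RGO step is replaced by $\normal{\mathsf{prox}_{\eta f}(y),\eta\Id}$. By \cref{cor:chi-square}, the path-level $\chi^2$ over an epoch is at most $\exp(c\eta^2\beta_1^2dK)=O(1)$ once $\eta\lesssim 1/(\beta_1\sqrt{dK})$. Under the surrogate the increments are exactly independent Gaussians and the drift is exactly $-\eta\nabla f$ at the prox points, so no martingale concentration is needed; transferring back costs only a constant factor inside the logarithms.

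Your route can work without the surrogate, but you must supply what it gives for free:
(a) for $\eta\beta_1\le 1/2$ the RGO is $(1/\eta-\beta_1)$-strongly log-concave, so centered RGO draws are sub-Gaussian with proxy $O(\eta)$;
(b) integration by parts gives the exact drift $\En[X_{j+1}\mid Y_j]-Y_j=-\eta\,\En[\nabla f(X_{j+1})\mid Y_j]$;
(c) a vector-martingale exponential-moment bound, made uniform over the epoch by a union bound, controls the fluctuation sum by $\sqrt{K\eta(d+\log(K/\delta))}$.

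Finally, in (ii) you ask for failure probability $\delta/N$ per step. The hypothesis contains only $\log(K/\delta)$ and does not support this, but per-step $O(\delta)$ is all the $N\delta$ conclusion requires.
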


Note that when $P=\unif([m])$, i.e., $f(x)=\frac{1}{m}\sum_{i=1}^m f_i(x)$, evaluating $\nabla f(x)$ has cost $m$, and the proximal oracle can be implemented via standard SVRG methods in $\wt{O}(m)$ time. Therefore, assuming $\Delta=\wt{O}(d)$, we can choose $K=m$ to obtain a query complexity of $\wt{O}(m+\kappa\,(\sqrt{md}+m^{2/3}d^{1/3}))$, where $\kappa\deq \CLSI(\mu)\beta_1$.
\scedit{This improves upon the $\wt O(m + \kappa\,(\sqrt{md} + d))$ complexity achieved in~\citet{LeeSheTia21RGO} in the regime $m\le d$. Moreover, by combining the two results, i.e., using our result for $m\le d$ and their result for $m > d$, it yields an overall bound of $\widetilde O(m + \kappa\sqrt{md})$.}

\section{Lower bound: light tails are necessary for high-accuracy sampling}\label{sec:lower}

\subsection{A simple lower bound}

We establish lower bounds for sampling with stochastic gradient queries under oracles with bounded $\psi$-moment.
\begin{definition}
Let $\psi:[0,+\infty)\to [0,+\infty)$ be an increasing function such that $\psi(0)=0$. An oracle $\Ograd{\cdot}$ is a $\psi$-oracle for $f$ if for any $x\in\RR^d$, under $g\sim \Ograd{x}$, it holds that $\En[g]=\nabla f(x)$ and $\En\psi(\nrm{g-\nabla f(x)})\leq 1$.
\end{definition}

In the following, we present a simple information-theoretic argument based on the goal of sampling from a one-dimensional Gaussian $p_\theta=\normal{\theta,\tfrac{1}{\alpha}\Id}$. Here $p_\theta(x)\propto \exp(-f_\theta(x))$ with $f_\theta(x)=\frac{\alpha}{2}(x-\theta)^2$ and $\nabla f_\theta(x)=\alpha(x-\theta)$. Since $f_\theta$ is $\alpha$-strongly convex, the LSI holds with $\CLSI(p_\theta)\leq \tfrac1\alpha$.

\begin{proposition}[Lower bound]\label{prop:Ograd-lower}
Fix any increasing function $\psi:[0,+\infty)\to [0,+\infty)$ such that $\psi(0)=0$. Suppose that $T\geq 1$ and $\delta\in(0,1]$, and there is an algorithm $\Alg$ such that for any $\theta\in\crl{0,\scedit{\delta/\sqrt\alpha}}$, given any $\psi$-oracle $O$ for $f_\theta$, return a sample $x\sim \Alg(O)$ using $T$ queries to $O$ and $\Dtv{p_\theta}{\Alg(O)}\leq \frac{\delta}{10}$. Then, it holds that
\begin{align*}
    T\geq \frac{1}{10\sqrt{\alpha}}F_\psi(\sqrt{\alpha}\delta)\,, \qquad F_\psi(\theta)\deq \sup\crl{u\geq \theta: (1-\psi(\theta))\cdot u\geq \theta \cdot \psi(u)}\,.
\end{align*}
\end{proposition}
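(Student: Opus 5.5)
We construct two $\psi$-oracles, one for $f_0$ and one for $f_{\theta_1}$ with $\theta_1 = \delta/\sqrt\alpha$, that produce identical outputs except on a rare event. If the algorithm makes $T$ queries, the laws of its outputs then differ in total variation by at most the probability that the rare event ever occurs. On the other hand, $\Dtv{p_0}{p_{\theta_1}}$ is of order $\sqrt\alpha\,\theta_1 = \delta$. For concreteness, two one-dimensional Gaussians with common variance $1/\alpha$ and mean gap $\delta/\sqrt\alpha$ satisfy
\[
\Dtv{p_0}{p_{\theta_1}} = 2\Phi(\delta/2) - 1 \ge c\,\delta .
\]
We will need to check that $c\ge 3/10$ for $\delta\le 1$, which should hold since $2\Phi(1/2)-1\approx 0.38$ and the function is concave on $[0,1]$. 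Combined with the accuracy guarantee, the triangle inequality gives
\[
\Dtv{\Alg(O_0)}{\Alg(O_1)} \ge c\delta - \tfrac{2\delta}{10} \ge \tfrac{\delta}{10}.
\]

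The oracles are defined as follows. Note that $\nabla f_{\theta_1}(x) - \nabla f_0(x) = -\alpha\theta_1 = -\sqrt\alpha\,\delta$, a constant shift independent of $x$; write $a \deq \sqrt\alpha\,\delta$.

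\emph{Oracle for $f_0$.} At $x$, return $\nabla f_0(x) + \xi$, where $\xi = 0$ with probability $1-p$ and $\xi = -u$ with probability $p$.

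\emph{Oracle for $f_{\theta_1}$.} Return $\nabla f_0(x) + \xi'$, where $\xi' = 0$ with probability $1-p'$ and $\xi' = -u$ with probability $p'$.

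The two oracles share the common outputs $\nabla f_0(x)$ and $\nabla f_0(x) - u$. Unbiasedness requires $\En\xi = 0$ and $\En\xi' = -a$. A cleaner variant is to let both noises take values in $\{0,-u\}$ and shift the means by adding a deterministic $+pu$ to both. The first oracle's noise is then $pu$ with probability $1-p$ and $pu - u$ with probability $p$, which has mean $0$. The second oracle uses the same support points but with probability $p+a/u$ on the second point, which gives mean $-a$. The mixed outputs coincide as sets.

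A coupling makes the two oracles identical except with probability $a/u$ per query. Over $T$ adaptive queries, a union bound on the coupling gives
\[
\Dtv{\Alg(O_0)}{\Alg(O_1)} \le T a/u .
\]
Hence $T \ge u/10\cdot (a/\delta)/a \cdot\ldots$; more cleanly, $T a/u \ge \delta/10$ yields
\[
T \ge \frac{u}{10\sqrt\alpha}.
\]
Adaptivity is handled by coupling query by query.

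It remains to choose $p$ and $u$ so that both oracles satisfy $\En\psi \le 1$. The simplest choice is $p = 0$ for the first oracle, so that oracle is exact and $\psi(0) = 0$. The second oracle then has noise with mean $-a$: it takes the value $a\cdot\frac{u-a}{u}\approx a$ with probability $1-a/u$ and the value $a - u$ with probability $a/u$. To keep the noise centered on $\nabla f_{\theta_1}$, we instead set the second oracle's noise relative to $\nabla f_{\theta_1}$ to be $+a$ with probability $1-q$ and $a-u$ with probability $q$, where $q = a/u$. The first oracle stays exact, but its output $\nabla f_0(x)$ equals $\nabla f_{\theta_1}(x) + a$, which matches the second oracle's first support point. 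So the outputs coincide except on the event of probability $q$.

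The $\psi$-constraint for the second oracle is
\[
(1-q)\,\psi(a) + q\,\psi(u-a) \le 1 .
\]
Using monotonicity, $\psi(u-a)\le\psi(u)$, it suffices to have $(1-a/u)\psi(a) + (a/u)\psi(u) \le 1$. This is implied by $\psi(a) + (a/u)\psi(u) \le 1$, i.e., $(1-\psi(a))\,u \ge a\,\psi(u)$, which is exactly the condition in the definition of $F_\psi(a)$ with $a = \sqrt\alpha\,\delta$. Taking $u$ arbitrarily close to the supremum gives $T \ge F_\psi(\sqrt\alpha\delta)/(10\sqrt\alpha)$. We also need $q \le 1$, i.e., $u \ge a$, which the definition of $F_\psi$ enforces.

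The main care points are the TV constant between the two Gaussians, where we need to verify $c - 2/10 \ge 1/10$, and a rigorous coupling argument for adaptive queries. If the constant turns out tight, we will use the exact formula $2\Phi(\delta/2)-1 \ge 0.38\,\delta$ on $(0,1]$.
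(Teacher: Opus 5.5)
Your proposal is correct and, with the abandoned intermediate variants removed, it is the paper's proof. The shared steps are: an exact oracle for $f_0$ against an oracle for $f_{\theta}$ returning $\alpha x$ with probability $1-a/u$ and $\alpha x-u$ with probability $a/u$ (the paper's $M=u$, $p=a/u$); a per-query TV of $a/u$ summed over $T$ queries, where your coupling plays the role of the paper's sub-additivity lemma; the Gaussian TV lower bound, with your constant $2\Phi(1/2)-1\approx 0.38$ in place of the paper's $1/3$; and the $\psi$-moment check, which reduces by monotonicity to the defining condition of $F_\psi(\sqrt\alpha\,\delta)$. In a final write-up, drop the false starts and the garbled line just before ``more cleanly''.
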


\begin{remark}\label{rmk:delta-rate-lower}
As demonstration, we describe the implied query complexity lower bound in the following special cases.

\begin{itemize}
\item Consider the case where the only assumption on $\Ograd{\cdot}$ is that the variance is bounded by $\sigma^2$. Then we can take $\psi(m)=\frac{m^2}{\sigma^2}$ and $F_\psi(\theta)=\frac{\sigma^2}{\theta} - \theta$ for any $\theta\leq \frac{\sigma}{2}$, i.e., $\Omega(\sigma^2/(\alpha\delta))$ queries are necessary for stochastic gradients with only bounded second moment. 
Thus, in this case, our upper bounds are optimal (\cref{rmk:delta-rate}), at least with respect to the dependence on $\delta$.
\item More generally, for $\psi(m)=(m/\sigma)^s$ with $s>1$, we have $F_\psi(\delta)\asymp \frac{\sigma^{s/(s-1)}}{\delta^{1/(s-1)}}$, i.e., if the only assumption on $\Ograd{\cdot}$ is a bounded $s$-th moment, \cref{prop:Ograd-lower} yields a lower bound of $\Omega\prn[\big]{\frac{(\sigma/\sqrt{\alpha})^{s/(s-1)}}{\delta^{1/(s-1)}}}$ queries. In particular, taking $s\to 1$ implies that it is intractable to sample with stochastic gradients with only bounded first moment.
\item For $\Ograd{\cdot}$ with sub-exponential tail, we take $\psi(m)=e^{(m/\sigma)^\zeta}-1$ with $\zeta>0$. Then $F_\psi(\delta)\geq \Omega(\sigma\log^{1/\zeta}(\sigma/\delta))$, and \cref{prop:Ograd-lower} yields a lower bound of $\Omega\prn[\big]{\frac{\sigma}{\sqrt{\alpha}}\log^{1/\zeta}\frac{\sigma}{\sqrt\alpha\delta}}$.
\scedit{On the other hand, in this case, the argument of~\citet{ChaBarLon22Lower} yields an alternate lower bound of $\Omega(\frac{\sigma^2}{\alpha})$ in this case.}
\end{itemize}
\end{remark}

\begin{remark}[Dimensional dependence]
For light-tailed (e.g., sub-exponential) stochastic gradients, our upper bound scales as $\widetilde O(\kappa d^{1/2} + \sigma^2/\alpha)$, whereas our lower bound does not scale with $d$. The $\kappa d^{1/2}$ term is therefore not captured by the lower bound; since it is independent of the variance proxy $\sigma$, it reflects the baseline cost of sampling even with an exact oracle. It may be possible to reduce this term, as the best-known lower bound for exact-oracle sampling scales only as $\min\{\sqrt{\kappa}, d\}$~\citep{Chewi+23QueryLower}. However, closing this gap remains a long-standing open question.
\end{remark}

\subsection{Revisiting the lower bound of~\citet{ChaBarLon22Lower}}\label{ssec:bartlett}

Here, we discuss the lower bound of~\citet{ChaBarLon22Lower}, which also applies to sampling with stochastic gradients, in order to avoid potential misunderstandings.

Their main lower bound shows that there is a strongly log-concave and log-smooth distribution, with condition number $\kappa = O(1)$, such that it requires $\Omega(\sigma^2/\delta^2)$ queries to reach $\delta$ error in TV distance.
Here, $\sigma^2$ is the variance of the stochastic gradients.\footnote{A variance bound of $\sigma^2$ in our convention corresponds to a variance bound of $\sigma^2 d$ in theirs.}
This appears to contradict our upper bound, which only requires $O(1/\delta)$ queries in the bounded variance case.
Moreover, inspection of their lower bound reveals that it holds when the stochastic gradient oracle is produced by adding Gaussian noise; in particular, the stochastic gradients have sub-Gaussian tails.
In such a setting, we have produced algorithms whose complexity scales as $O(\polylog(1/\delta))$.

To resolve this apparent contradiction, we remark that~\citet[Theorem 4.1]{ChaBarLon22Lower} requires taking the strong log-concavity parameter $\alpha \lesssim \delta^2$.
Our upper bounds, which generally incur a dependence of $\sigma^2/\alpha$, therefore match their $\Omega(\sigma^2/\delta^2)$ lower bound for their hard examples up to logarithmic terms.
However, their lower bounds do not address the question of what the best dependence on $\delta$ is, provided that $\alpha$ is bounded away from zero.
This is the reason why we proved \cref{prop:Ograd-lower}.

We leave it as an open question to prove a more general lower bound which captures the dependence, not just on $\delta$, but on other problem parameters such as $\kappa$ and $d$.
In the case of exact oracle access, proving lower bounds for sampling remains notoriously challenging, with existing results providing sharp characterizations only for Gaussians or in low dimension~\citep{Chewi+22Sampling1D, Chewi+23QueryLower}.

\section{Conclusion}

In this work, we have shown that high-accuracy guarantees---$\polylog(1/\delta)$ rates---are achievable for sampling, provided that the stochastic gradients have light tails.
Moreover, via an information-theoretic argument, we have shown that light tails are necessary for such a result.
In fact, as a by-product of our analysis, we identified that the optimal dependence is $\Theta(1/\delta)$ if the stochastic gradients are only assumed to have a bounded variance.
\scedit{We then improved the state-of-the-art for high-accuracy sampling from finite-sum potentials.}

Several open questions remain, of which we list two: (1) Can the lower bound be extended to capture dependence on other problem parameters, such as the dimension $d$? (2) \scedit{What is the optimal complexity} in the finite-sum setting?

\arxiv{
  \section*{Acknowledgments}
  We thank Sam Power for bringing to our attention useful references.
  We acknowledge support from AFOSR through award FA9550-25-1-0375, Simons Foundation and the NSF through awards DMS-2031883 and PHY-2019786, and DARPA AIQ award.
 CD is supported by a Simons Investigator Award, a Simons Collaboration on Algorithmic Fairness, ONR MURI grant N00014-25-1-2116, and ONR grant N00014-25-1-2296.
}

\bibliography{ref.bib}

\newpage

\appendix

\tableofcontents

\section{Implementation of the approximate proximal oracle}\label{app:prox}
In this section, we show how to implement the approximate proximal oracle (\cref{asmp:prox_apx}) using a stochastic gradient oracle.

\begin{proof}[\pfref{lem:implement-prox}]
Our goal is to approximately compute
\begin{align*}
    \xhat\approx \arg\min_{x\in\R^d} \prn[\big]{ f(x)+\frac{1}{2\eta}\nrm{x-\xz}^2 }\,.
\end{align*}
Let $\xstar$ be an optimal solution to the minimization problem. We denote $m_s\deq \beta_s^{1/(1+s)}$ and assume that $\eta\leq \frac{1}{2m_s}$.

We consider the following linearized update rule: Let $X_0=\xz$, and
\begin{align*}
    X_{k+1}=\frac{X_k-\eta g_k+\xz}{2}\,,\quad 
    g_k\sim \Ogradn{X_k}\,,\qquad \forall k\geq 0\,.
\end{align*}
Note that $\xstar+\eta \nabla f(\xstar)=\xz$. Therefore,
\begin{align*}
    2\nrm{X_{k+1}-\xstar}=&~\nrm{X_k-\eta g_k-\xstar+\eta \nabla f(\xstar)} \\
    \leq&~ \nrm{X_k-\xstar-\eta(\nabla f(X_k)-\nabla f(\xstar))}+\eta\nrm{g_k-\nabla f(X_k)}\,.
\end{align*}
In the following we denote
\begin{align*}
    \Delta_k\deq \nrm{X_k-\xstar}\,, \qquad
    E_k\deq\nrm{g_k-\nabla f(X_k)}\,.
\end{align*}
Note that by
\cref{ass:holder},
\begin{align*}
    \nrm{X_k-\xstar-\eta(\nabla f(X_k)-\nabla f(\xstar))} 
    \le &~\nrm{X_k-\xstar}+\eta\nrm{\nabla f(X_k)-\nabla f(\xstar)} \\
    \leq&~ \Delta_k+\eta\beta_s\Delta_k^{s}\,.
\end{align*}
When $s\in(0,1)$, we can use AM--GM inequality to derive $\eta \beta_s\Delta^{s}\leq \frac{1}{2}\Delta+\eta m_s$. It is also straightforward to verify this inequality holds for $s\in\crl{0,1}$.

Then, it holds that
\begin{align*}
    \Delta_{k+1}\leq \frac{\frac32\Delta_k+\eta (m_s+E_k)}{2}\,, \qquad \forall k\geq 0\,.
\end{align*}
Applying this inequality recursively, we get
\begin{align*}
    \Delta_k\leq \prn[\Big]{\frac{3}{4}}^k \Delta_0+2\eta m_s+\frac{\eta}{2}\sum_{i=1}^{k} \prn[\Big]{\frac{3}{4}}^{i-1}E_{k-i}\,.
\end{align*}
Note that $\PP(Y\geq 2y)\leq \frac{1}{y}\En(Y-y)_+$ for $y>0$, and hence
\begin{align*}
    \PP\prn[\Big]{ \Delta_k\geq 2\prn[\Big]{\frac{3}{4}}^k \Delta_0 +4\eta(m_s+M) }\leq&~ \frac{1}{8M}\sum_{i=1}^{k} \prn[\Big]{\frac{3}{4}}^{i-1}\En(E_{k-i}-M)_+ \\
    \leq&~ \frac{1}{2}\eps_n(M)\,.
\end{align*}
Note that
\begin{align*}
    \Delta_0=&~\nrm{x_0-\xstar}
    =\eta\nrm{\nabla f(\xstar)} 
    \leq \eta \nrm{\nabla f(x_0)}
    +\eta \nrm{\nabla f(x_0)-\nabla f(\xstar)} \\
    \leq&~ \eta G+\eta\beta_s\nrm{x_0-\xstar}^s 
    \leq \eta G+\eta m_s+\frac12\Delta_0,
\end{align*}
and hence $\Delta_0\leq 2\eta G+2\eta m_s$.
In particular, when $k\geq 10\log(4G/(M+m_s))$, we know
\begin{align*}
    \PP\prn*{ \Delta_k\geq 5\eta(m_s+M)}\leq \eps_n(M)\,.
\end{align*}
Finally, note that
\begin{align*}
    \nrm{X_k+\eta \nabla f(X_k)-x_0}
    \leq&~ \nrm{X_k-\xstar}+\eta \nrm{\nabla f(X_k)-\nabla f(\xstar)} \\
    \leq&~ \Delta_k+\eta \beta_s\Delta_k^{s}
    \leq \frac32\Delta_k+\eta m_s,
\end{align*}
and hence we know
\begin{align*}
    \PP\prn*{ \nrm{X_k+\eta \nabla f(X_k)-x_0}\geq 10\eta(m_s+M)}\leq \eps_n(M)\,.
\end{align*}
\end{proof}

\section{Technical tools}
\begin{lemma}\label{lem:Gaussian-trunc}
Let $\lambda>0$, $B>0$. Then
\begin{align*}
    \En_{Z\sim \normal{0,\sigma^2}}\brk{e^{\lambda(\abs{Z}-B)_+}-1}\leq 2e^{\frac12\lambda^2\sigma^2}\,.
\end{align*}
Further, when $B\geq 2\max\crl{\lambda \sigma^2,\sigma}$, we can bound
\begin{align*}
    \En_{Z\sim \normal{0,\sigma^2}}\brk{e^{\lambda(\abs{Z}-B)_+}-1}\leq e^{-\frac{B^2}{8\sigma^2}}.
\end{align*}
\end{lemma}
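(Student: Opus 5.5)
The plan is a direct computation with the Gaussian density, treating the two claims separately. Write $\varphi_\sigma$ for the $\normal{0,\sigma^2}$ density and use symmetry of $Z$ throughout.

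\textbf{First bound.} This is crude. Since $(\abs{Z}-B)_+\le \abs{Z}$, we get
\begin{align*}
e^{\lambda(\abs{Z}-B)_+}-1\le e^{\lambda\abs{Z}}\le e^{\lambda Z}+e^{-\lambda Z}.
\end{align*}
Taking expectations and using the Gaussian moment generating function $\En e^{\pm\lambda Z}=e^{\lambda^2\sigma^2/2}$ gives $2e^{\lambda^2\sigma^2/2}$. No further work is needed.

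\textbf{Second bound, setup.} The integrand vanishes on $\{\abs{Z}\le B\}$, so by symmetry
\begin{align*}
\En\brk{e^{\lambda(\abs{Z}-B)_+}-1}=2\int_B^\infty \prn{e^{\lambda(z-B)}-1}\,\varphi_\sigma(z)\,dz\le 2\int_B^\infty e^{\lambda(z-B)}\,\varphi_\sigma(z)\,dz,
\end{align*}
where we drop the negative term. We then complete the square:
\begin{align*}
e^{\lambda z}\varphi_\sigma(z)=e^{\lambda^2\sigma^2/2}\,\varphi_\sigma(z-\lambda\sigma^2).
\end{align*}
This rewrites the last integral as $2e^{-\lambda B+\lambda^2\sigma^2/2}\,\PP(Z\ge B-\lambda\sigma^2)$.

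\textbf{Second bound, using the hypothesis.} Now invoke $B\ge 2\lambda\sigma^2$ twice:
\begin{itemize}
\item It gives $\lambda\sigma^2\le B/2$, hence $-\lambda B+\lambda^2\sigma^2/2\le -\lambda B+\lambda B/4\le 0$, so the exponential prefactor is at most $1$.
\item It gives $B-\lambda\sigma^2\ge B/2\ge 0$, so the standard Chernoff-type tail $\PP(Z\ge t)\le \tfrac12 e^{-t^2/(2\sigma^2)}$ for $t\ge 0$ yields $\PP(Z\ge B-\lambda\sigma^2)\le \tfrac12 e^{-B^2/(8\sigma^2)}$.
\end{itemize}
Multiplying by the factor $2$ gives exactly $e^{-B^2/(8\sigma^2)}$.

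The condition $B\ge 2\sigma$ is then not even needed; if one prefers the cruder tail $\PP(Z\ge t)\le e^{-t^2/(2\sigma^2)}$, that condition could be used to absorb a constant factor.

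\textbf{Main obstacle.} The only delicate step is keeping the constant $\tfrac12$ in the Gaussian tail bound, so that the leading factor $2$ is cancelled. I would verify $\PP(Z\ge t)\le\tfrac12 e^{-t^2/(2\sigma^2)}$ for $t\ge0$ by the usual argument: the ratio $e^{t^2/(2\sigma^2)}\PP(Z\ge t)$ equals $\tfrac12$ at $t=0$ and is nonincreasing in $t$.
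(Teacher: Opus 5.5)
Your proof is correct. For the second bound it follows the paper's skeleton: restrict to $\{\lvert Z\rvert>B\}$, complete the square to reach $2e^{\lambda^2\sigma^2/2-\lambda B}\,\mathbb{P}(Z\ge B-\lambda\sigma^2)$, then bound the shifted tail.

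The difference is the tail inequality. The paper (after rescaling to $\sigma=1$) uses the Mills-ratio bound $\mathbb{P}(Z\ge w)\le \frac{1}{\sqrt{2\pi}\,w}e^{-w^2/2}$. Its prefactor blows up as $w\to 0$, and it is harmless only because $w=B-\lambda\ge B/2\ge 1$; this is exactly where the hypothesis $B\ge 2\sigma$ enters. Your bound $\mathbb{P}(Z\ge t)\le\frac12 e^{-t^2/(2\sigma^2)}$ has no such singularity, so you rightly observe that $B\ge 2\sigma$ can be dropped. In fact, if you keep the exponents together instead of bounding $e^{-\lambda B+\lambda^2\sigma^2/2}$ by $1$, they cancel exactly: $-\lambda B+\frac12\lambda^2\sigma^2-\frac{(B-\lambda\sigma^2)^2}{2\sigma^2}=-\frac{B^2}{2\sigma^2}$. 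So your route gives $e^{-B^2/(2\sigma^2)}$ under the sole assumption $B\ge\lambda\sigma^2$.

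For the first bound, the paper reads it off the same closed form by bounding the tail probability and $e^{-\lambda B}$ by $1$. You bypass the integral via $(\lvert Z\rvert-B)_+\le\lvert Z\rvert$ and $e^{\lambda\lvert Z\rvert}\le e^{\lambda Z}+e^{-\lambda Z}$. This is more elementary but discards the $e^{-\lambda B}$ factor, which the stated bound does not need anyway.
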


\begin{proof}
By rescaling, we may assume $\sigma=1$. Then, we can upper bound
\begin{align*}
    \En_{Z\sim \normal{0,\sigma^2}}\brk{e^{\lambda(\abs{Z}-B)_+}-1}
    \leq&~ \sqrt{\frac{2}{\pi}}\int_{B}^{\infty} e^{\lambda (z-B)-\frac12z^2}\,dz \\
    =&~ \sqrt{\frac{2}{\pi}}\int_{B}^{\infty} e^{\frac12\lambda^2-\lambda B-\frac12(z-\lambda)^2}\,dz
    =2e^{\frac12\lambda^2-\lambda B}\Phi(B-\lambda)\,,
\end{align*}
where $\Phi(w)=\frac{1}{\sqrt{2\pi}}\int_{w}^\infty e^{-\frac12z^2}\,dz$. The first inequality then follows from $\Phi(B-\lambda)\leq 1$. Further, using the inequality $\Phi(w)\leq \frac{1}{\sqrt{2\pi}w}e^{-\frac12w^2}$ for $w>0$, we can bound
\begin{align*}
    \En_{Z\sim \normal{0,\sigma^2}}\brk{e^{\lambda(\abs{Z}-B)_+}-1}
\leq&~ 2e^{\frac12\lambda^2-\lambda B}\Phi(B-\lambda) \\
\leq&~ e^{\frac12\lambda^2-\lambda B-\frac{1}{8}B^2}\leq e^{-\frac{1}{8}B^2}\,.
\end{align*}
\end{proof}

The following lemma is standard~\citep{chen2026high}.
\begin{lemma}\label{lem:Gaussian-s}
Suppose that $\eta>0$ and $0\leq \lambda\leq \frac{d^{1-s}}{4s\eta^s}$. Then, it holds that
\begin{align*}
    \En_{W\sim \normal{0,\eta\Id}} \exp\prn*{\lambda\nrm{W}^{2s}}\leq \exp\prn*{2(\eta d)^s \lambda}\,.
\end{align*}
\end{lemma}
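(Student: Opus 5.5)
We must prove \cref{lem:Gaussian-s}: for $W\sim\normal{0,\eta\Id}$ and $0\le\lambda\le \frac{d^{1-s}}{4s\eta^s}$, we have $\En\exp(\lambda\nrm{W}^{2s})\le\exp(2(\eta d)^s\lambda)$.

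The plan is to linearize the concave function $t\mapsto t^{s}$ of $t=\nrm{W}^2$ at its mean, and then use the Gaussian (chi-square) moment generating function. The edge cases are handled separately: for $s=0$ the claim is trivial, since $\lambda\le+\infty$ is unconstrained but both sides equal $e^{\lambda}$ and $e^{2\lambda}$. So assume $s\in(0,1]$ and write $t_0\deq \eta d=\En\nrm{W}^2$.

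\textbf{Step 1 (tangent line bound).} By concavity of $t\mapsto t^s$ on $[0,\infty)$, for all $t\ge0$,
\begin{align*}
t^s\le t_0^s+s\,t_0^{s-1}(t-t_0)=(1-s)\,t_0^s+s\,t_0^{s-1}t .
\end{align*}
Setting $t=\nrm{W}^2$ gives
\begin{align*}
\lambda\nrm{W}^{2s}\le (1-s)\lambda t_0^s+\mu\nrm{W}^2,\qquad \mu\deq \lambda s\,t_0^{s-1}.
\end{align*}

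\textbf{Step 2 (chi-square MGF).} We have $\En e^{\mu\nrm{W}^2}=(1-2\mu\eta)^{-d/2}$ whenever $2\mu\eta<1$. The hypothesis on $\lambda$ gives
\begin{align*}
\mu\eta=\lambda s\,\eta^{s}d^{s-1}\le \tfrac14 .
\end{align*}
Using $-\log(1-x)\le 2x$ for $x\in[0,1/2]$, we get
\begin{align*}
\En e^{\mu\nrm{W}^2}\le \exp(2\mu\eta d)=\exp\prn{2s\lambda t_0^s}.
\end{align*}

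\textbf{Step 3 (combine).} Putting Steps 1 and 2 together,
\begin{align*}
\En\exp(\lambda\nrm{W}^{2s})\le\exp\prn{(1-s)\lambda t_0^s+2s\lambda t_0^s}=\exp\prn{(1+s)\lambda t_0^s},
\end{align*}
and $(1+s)\lambda t_0^s\le 2(\eta d)^s\lambda$ since $s\le1$.

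There is no real obstacle in this argument. The only step needing care is checking that the constant $\frac14$ in the hypothesis places $2\mu\eta$ in the range where the elementary bound $-\log(1-x)\le 2x$ applies; as computed in Step 2, $2\mu\eta\le\frac12$, so it does.
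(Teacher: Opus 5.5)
Your proof is correct and complete. The paper gives no proof of this lemma; it only calls it standard and cites \citet{chen2026high}, so there is no in-paper argument to compare against.

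Your route is self-contained: you linearize the concave map $t\mapsto t^s$ at $t_0=\eta d$, then apply the exact $\chi^2$ moment generating function. The hypothesis on $\lambda$ is used exactly once, to guarantee $2\mu\eta\le 1/2$. As a small bonus, you get the slightly sharper exponent $(1+s)(\eta d)^s\lambda$. Reading the constraint on $\lambda$ as vacuous when $s=0$ is the right interpretation of the statement.
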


For two probability measures $\mu$, $\nu$, and $\ell > 1$, we write $\Dren[\ell]{\mu}{\nu} \deq \En_\mu (\frac{\mu}{\nu})^{\ell-1}-1$.

\begin{lemma}\label{lem:Renyi-to-diff}
For any $\ell>1$, it holds that
\begin{align}
    \max\crl{ \Dren[\ell]{\mu_f}{\mu_g}, \Dren[\ell]{\mu_g}{\mu_f} }
    \leq \En_{x\sim \mu_f}\brk{e^{2\ell\,\abs{f(x)-g(x)}}-1}\,.
\end{align}
Furthermore, it holds that
\begin{align}
    \Dtv{\mu_f}{\mu_g}
    \leq \En_{x\sim \mu_f}\brk{e^{2\,\abs{f(x)-g(x)}}-1}\,.
\end{align}
\end{lemma}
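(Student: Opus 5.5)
We prove \cref{lem:Renyi-to-diff} by writing both densities against a common reference and controlling the ratio of normalizing constants. Set $h\deq g-f$, so that
\begin{align*}
    \frac{\mu_g(x)}{\mu_f(x)}=\frac{Z_f}{Z_g}\,e^{-h(x)}\,,\qquad \frac{Z_g}{Z_f}=\En_{x\sim\mu_f}e^{-h(x)}\,.
\end{align*}
Hence every quantity can be written as an expectation under $\mu_f$ of expressions in $e^{\pm h}$. The key elementary fact we will use is that for any real $a$ we have $e^{a}\le e^{\abs a}$. In addition, Jensen's inequality gives $e^{-\En h}\le \En e^{-h}$ and $e^{\En h}\le \En e^{h}$ under $\mu_f$, which bounds the ratio of normalizing constants $Z_f/Z_g=(\En_{\mu_f}e^{-h})^{-1}$ above by $\En_{\mu_f} e^{h}$.

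For $\Dren[\ell]{\mu_g}{\mu_f}=\En_{\mu_f}(\mu_g/\mu_f)^{\ell}-1$, we get $(Z_f/Z_g)^\ell\,\En_{\mu_f}e^{-\ell h}$. First, $(Z_f/Z_g)^\ell\le (\En_{\mu_f} e^{\abs h})^\ell\le \En_{\mu_f} e^{\ell\abs h}$ by Jensen, since $t\mapsto t^\ell$ is convex. Then Cauchy--Schwarz, or simply Jensen again, combines the two factors: $\En e^{\ell\abs h}\cdot \En e^{\ell \abs h}\le \En e^{2\ell\abs h}$, because $(\En Y)^2\le \En Y^2$. This yields $\Dren[\ell]{\mu_g}{\mu_f}\le \En_{\mu_f}[e^{2\ell\abs h}]-1$.

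For $\Dren[\ell]{\mu_f}{\mu_g}=\En_{\mu_f}(\mu_f/\mu_g)^{\ell-1}-1$, we get $(Z_g/Z_f)^{\ell-1}\En_{\mu_f}e^{(\ell-1)h}$. Using $Z_g/Z_f=\En e^{-h}\le \En e^{\abs h}$, the same two-step Jensen/Cauchy--Schwarz argument gives a bound of $\En e^{2(\ell-1)\abs h}-1$, which is at most $\En e^{2\ell\abs h}-1$.

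For the total variation bound, write
\begin{align*}
    \Dtv{\mu_f}{\mu_g}=\tfrac12\En_{\mu_f}\abs{1-\mu_g/\mu_f}\le \tfrac12\prn[\big]{\En_{\mu_f}(\mu_g/\mu_f)^2-1}^{1/2}\,.
\end{align*}
This reduces to the $\ell=2$ chi-square case, but that only gives a square root of $\En e^{4\abs h}-1$, which does not match the stated constant. The better route is direct. Put $c=Z_f/Z_g$, so that $\mu_g/\mu_f=c\,e^{-h}$. By the triangle inequality,
\begin{align*}
    \abs{1-ce^{-h}}\le \abs{1-e^{-h}}+e^{-h}\abs{1-c}\le (e^{\abs h}-1)+e^{\abs h}\abs{1-c}\,.
\end{align*}
Moreover, $\abs{1-1/c}=\abs{\En(e^{-h}-1)}\le \En(e^{\abs h}-1)\deq \epsilon$, so $\abs{1-c}\le \epsilon/(1-\epsilon)$ when $\epsilon<1$. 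Combining these,
\begin{align*}
    \tfrac12\En\abs{1-ce^{-h}}\le \tfrac12\prn[\big]{\epsilon+(1+\epsilon)\tfrac{\epsilon}{1-\epsilon}}\,.
\end{align*}
When $\epsilon\le 1/3$, this is at most $2\epsilon$. Since $\En(e^{2\abs h}-1)\ge 2\epsilon$, it is bounded by the right-hand side of the claim.

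When $\epsilon>1/3$, the right-hand side satisfies $\En(e^{2\abs h}-1)\ge 2\epsilon\ge 2/3$, and this falls short of the trivial bound $\Dtv\le1$. We therefore need more care. Using $(e^{x}-1)^2\le e^{2x}-1$ and Jensen, $\En(e^{2\abs h}-1)\ge \epsilon^2+2\epsilon$, which is $\ge 1$ once $\epsilon\ge\sqrt2-1\approx0.414$. In the remaining window $\epsilon\in(1/3,0.414)$, the bound $\tfrac12(\epsilon+(1+\epsilon)\epsilon/(1-\epsilon))$ is still below $\epsilon^2+2\epsilon$; at $\epsilon=0.414$ the former is about $0.71$ and the latter about $1$. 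We will verify this with a short monotonicity check. The main obstacle is exactly this constant bookkeeping in the total variation case.
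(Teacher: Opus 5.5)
Your R\'enyi bounds follow the paper's argument: write $Z_g/Z_f=\En_{\mu_f}e^{-h}$, control its powers by Jensen, and merge the two factors via $(\En Y)^2\le \En Y^2$. The bound for $\Dren[\ell]{\mu_g}{\mu_f}$ is fine.

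For $\Dren[\ell]{\mu_f}{\mu_g}$, however, ``the same two-step argument'' fails when $\ell\in(1,2)$. The step $(\En_{\mu_f}e^{\abs{h}})^{\ell-1}\le \En_{\mu_f}e^{(\ell-1)\abs{h}}$ is Jensen in the wrong direction, since $t\mapsto t^{\ell-1}$ is concave there. Your intermediate bound $\En_{\mu_f}e^{2(\ell-1)\abs{h}}-1$ is in fact false in that range. For a counterexample, take $\ell=5/4$, with $h=-H$ on a set of $\mu_f$-mass $e^{-2H/3}$ and $h=0$ elsewhere. As $H\to\infty$ the divergence grows like $e^{H/12}$, while $\En_{\mu_f}e^{\abs{h}/2}\to 1$. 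The paper's displayed intermediate $(\En_{\mu_f}e^{(\ell-1)\abs{f-g}})^2$ has the same defect.

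The stated $2\ell$ bound survives because of its slack. Since $\En_{\mu_f}e^{\abs{h}}\ge 1$, you have $(\En_{\mu_f}e^{-h})^{\ell-1}\le(\En_{\mu_f}e^{\abs{h}})^{\ell}\le\En_{\mu_f}e^{\ell\abs{h}}$. Also $\En_{\mu_f}e^{(\ell-1)h}\le\En_{\mu_f}e^{\ell\abs{h}}$. So the product is at most $(\En_{\mu_f}e^{\ell\abs{h}})^2\le\En_{\mu_f}e^{2\ell\abs{h}}$.

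For total variation your route differs from the paper's; it is correct but roundabout. The paper factors out $c=Z_f/Z_g=1/\En_{\mu_f}e^{f-g}$ first, so that
$2\Dtv{\mu_f}{\mu_g}\le c\,\bigl(\En_{\mu_f}\abs{e^{f-g}-1}+\abs{\En_{\mu_f}e^{f-g}-1}\bigr)$.
It then uses the Jensen bound $c\le\En_{\mu_f}e^{\abs{f-g}}$ (which you also wrote down) to get $\Dtv{\mu_f}{\mu_g}\le(1+\epsilon)\epsilon\le\En_{\mu_f}(e^{2\abs{f-g}}-1)$, with no cases.

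You instead split $ce^{-h}-1=(e^{-h}-1)+e^{-h}(c-1)$ and control $\abs{1-c}$ by inverting $\abs{1-1/c}\le\epsilon$. Your bound $\tfrac12\bigl(\epsilon+(1+\epsilon)\tfrac{\epsilon}{1-\epsilon}\bigr)$ simplifies to $\tfrac{\epsilon}{1-\epsilon}$. The deferred check is just $\tfrac{\epsilon}{1-\epsilon}\le\epsilon^2+2\epsilon\iff\epsilon^2+\epsilon\le 1$, which holds for $\epsilon\le(\sqrt5-1)/2$. Together with your $\epsilon\ge\sqrt2-1$ case, this covers everything, so the proof is complete.

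The case analysis is, however, an artifact of replacing $e^{-h}$ by $e^{\abs{h}}$ before integrating. Because $\En_{\mu_f}e^{-h}=1/c$, your second term is exactly $\abs{1-c}/c=\abs{\En_{\mu_f}(e^{-h}-1)}\le\epsilon$. This gives $\Dtv{\mu_f}{\mu_g}\le\epsilon\le\tfrac12\En_{\mu_f}(e^{2\abs{h}}-1)$ in one line. That is a factor of two sharper than the statement, and simpler than both your version and the paper's.
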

\begin{proof}
By definition, we can write
\begin{align*}
    Z_g
    = \int_{\RR^d} e^{-f(x)+(f(x)-g(x))}\,dx
    = Z_f \cdot \mathbb{E}_{x\sim \mu_f}e^{f(x)-g(x)},
\end{align*}
and hence
\begin{align*}
    \frac{\mu_f(x)}{\mu_g(x)}=\exp(g(x)-f(x))\frac{Z_g}{Z_f}=e^{g(x)-f(x)}\En_{\mu_f}[e^{f-g}]\,.
\end{align*}
Therefore, we have
\begin{align*}
    1+\Dren[\ell]{\mu_f}{\mu_g}=&~\En_{\mu_f}\prn[\big]{\frac{\mu_f}{\mu_g}}^{\ell-1}
    = \prn*{\En_{\mu_f}[e^{f-g}]}^{\ell-1}\cdot \En_{\mu_f}[e^{(\ell-1)(g-f)}] \\
    \leq&~ \prn[\big]{\En_{\mu_f}e^{(\ell-1)\abs{f-g}}}^2
    \leq \En_{\mu_f}[e^{2\ell\abs{f-g}}]\,.
\end{align*}
Similarly,
\begin{align*}
    1+\Dren[\ell]{\mu_g}{\mu_f}=&~\En_{\mu_f}\prn[\big]{\frac{\mu_g}{\mu_f}}^{\ell}= \prn*{\En_{\mu_f}[e^{f-g}]}^{-\ell}\cdot \En_{\mu_f}[e^{\ell(f-g)}] 
    \leq \En_{\mu_f}[e^{-\ell(f-g)}]\cdot \En_{\mu_f}[e^{\ell(f-g)}]\\
    \leq&~ \prn[\big]{\En_{\mu_f}e^{\ell\abs{f-g}}}^2
    \leq \En_{\mu_f}[e^{2\ell\abs{f-g}}]\,.
\end{align*}
Combining both inequalities completes the proof of the first inequality.

To prove the second inequality, we note that
\begin{align*}
    2\Dtv{\mu_f}{\mu_g}=&~\En_{\mu_f}\abs[\Big]{\frac{\mu_\scedit{g}}{\mu_\scedit{f}}-1}
    =\En_{x\sim \mu_f}\abs[\Big]{\frac{e^{f(x)-g(x)}}{\En_{\mu_f}[e^{f-g}]}-1} \\
    \leq&~ \frac{1}{\En_{\mu_f}[e^{f-g}]}\,\prn[\big]{ \En_{\mu_f}\abs{e^{f-g}-1}+\abs{\En_{\mu_f}[e^{f-g}]-1} } \\
    \leq&~ \frac{2}{\En_{\mu_f}[e^{f-g}]} \En_{\mu_f}\abs{e^{f-g}-1}\,.
\end{align*}
Note that $\abs{e^w-1}\leq e^{\abs{w}}-1$ and $\frac{1}{\En_{\mu_f}[e^{f-g}]}\leq \En_{\mu_f}[e^{g-f}]\leq \En_{\mu_f}[e^{\abs{f-g}}]$, so we can deduce
\begin{align*}
    \Dtv{\mu_f}{\mu_g}
    \leq \En_{\mu_f}[e^{\abs{f-g}}]\,\prn[\big]{ \En_{\mu_f}[e^{\abs{f-g}}]-1 }
    \leq \En_{\mu_f}[e^{2\abs{f-g}}-1]\,.
\end{align*}
\end{proof}

\begin{lemma}\label{lem:Renyi-to-cov}
For any $f:\cX\to [0,1]$, it holds that
\begin{align*}
    \En_{p}[f]-M\En_q[f]\leq \inf_{\lambda>1} M^{-(\lambda-1)}\,(1+\Dren{p}{q})\,.
\end{align*}
\end{lemma}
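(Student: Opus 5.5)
We argue by Hölder's inequality, with the change of measure written through the likelihood ratio $p/q$. Fix $\lambda>1$ and set $A\deq\crl{x: p(x)\geq M q(x)}$. On the complement $A^c$ we have $p< Mq$, and since $f\geq 0$,
\begin{align*}
    \En_p[f\,\indic_{A^c}]-M\En_q[f]\le \En_p[f\,\indic_{A^c}]-M\En_q[f\,\indic_{A^c}]\le 0\,.
\end{align*}
Therefore $\En_p[f]-M\En_q[f]\le \En_p[f\,\indic_A]\le \Pr_p(A)$, using $f\le 1$. The lemma thus reduces to a tail bound on the likelihood ratio under $p$.

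On $A$ we have $(p/q)^{\lambda-1}\ge M^{\lambda-1}$. Markov's inequality applied to $(p/q)^{\lambda-1}$ under $p$ then gives
\begin{align*}
    \Pr_p(A)\le M^{-(\lambda-1)}\,\En_p\brk[\big]{(p/q)^{\lambda-1}}=M^{-(\lambda-1)}\,(1+\Dren{p}{q})\,,
\end{align*}
by the definition $\Dren[\ell]{\mu}{\nu}=\En_\mu(\mu/\nu)^{\ell-1}-1$. Taking the infimum over $\lambda>1$ finishes the proof.

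The only delicate point is measure-theoretic. If $p\not\ll q$, the Rényi term is $+\infty$, so the bound is trivial. Otherwise $p/q$ is well defined $p$-a.e., and we must make sure $q=0$ is handled inside $A$; this is consistent, since there $p/q=\infty$. No other step should present an obstacle.
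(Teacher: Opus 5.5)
Your proof is correct, and it reaches the same bound by a slightly different route from the paper's.

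The paper writes the difference as a change of measure under $q$:
\[
\En_p[f]-M\En_q[f]=\En_q\bigl[(\tfrac{dp}{dq}-M)f\bigr]\le\En_q(\tfrac{dp}{dq}-M)_+\,.
\]
It then bounds this hinge moment of $Y=dp/dq$ using H\"older's inequality (exponents $\lambda/(\lambda-1)$ and $\lambda$) together with Markov's inequality for $\Pr_q(Y>M)$, arriving at $M^{-(\lambda-1)}\En_q[Y^\lambda]$.

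You instead discard the term $-M\En_q[f\indic_A]$ and reduce to $\Pr_p(A)=\En_q[Y\indic\{Y\ge M\}]$. This is looser than the paper's intermediate quantity $\En_q(Y-M)_+$, which is exactly the supremum of the left-hand side over $f$ when $p\ll q$. However, a single Markov step then gives the same final bound without H\"older. That step is equivalent to the pointwise inequality $\indic\{Y\ge M\}\le (Y/M)^{\lambda-1}$.

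So your argument is a bit more elementary. It also handles the case $p\not\ll q$ explicitly, which the paper leaves implicit by writing $dp/dq$. The paper's route keeps a sharper intermediate bound, but that extra sharpness is not used in the final statement.
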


\begin{proof}
By definition,
\begin{align*}
    \En_{p}[f]-M\En_q[f]=\En_q\brk[\Big]{\prn[\Big]{\frac{dp}{dq}-M}\,f}
    \leq \En_q \prn[\Big]{\frac{dp}{dq}-M}_+\,.
\end{align*}
Note that for any random variable $Y\geq 0$, we have
\begin{align*}
    \En(Y-M)_+=\En[\indic{\{Y>M\}}\,(Y-M)_+]\leq \bbP(Y>M)^{1-\frac{1}{\lambda}}\prn*{\En[(Y-M)_+^\lambda]}^{\frac{1}{\lambda}}\leq \frac{\En[Y^\lambda]}{M^{\lambda-1}}\,.
\end{align*}
Combining these inequalities and taking infimum over $\lambda>1$ completes the proof.
\end{proof}

\begin{lemma}[Sub-additivity for TV distance]
\label{lem:TV-chain}
Suppose that $X_1\to \cdots\to X_T$ is a Markov chain. Given a family of transition kernels $\rho=(\rho_t: \cX\to \Delta(\cX))_{t\in [T]}$, we let $\PP_\rho$ be the law of $X_1,\ldots,X_T$ under $X_1\sim \rho_1$, $X_t\sim\rho_t(\cdot\mid X_{t-1})$. Then, for any families of transition kernels $\rho, \rho'$, it holds that
\begin{align*}
  \Dtv{\PP_\rho}{\PP_{\rho'}}\leq 
\sum_{t=1}^{T} \En_{X_{t-1}\sim \PP_{\rho}}\brk*{\Dtv{\rho_t(\cdot\mid X_{t-1})}{\rho'_t(\cdot\mid X_{t-1})}}\,,
\end{align*}
where we regard $X_0= {\perp}$ and $\rho_1(\cdot\mid \perp)=\rho_1$.
\end{lemma}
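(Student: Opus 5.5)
The proof is a hybrid argument that swaps one transition kernel at a time. Write $\PP_\rho$ and $\PP_{\rho'}$ as joint laws on $\cX^T$. For $t=0,1,\dots,T$, define the intermediate law $\QQ_t$ as follows: $X_1,\dots,X_t$ follow the kernels $\rho_1,\dots,\rho_t$, and $X_{t+1},\dots,X_T$ follow $\rho'_{t+1},\dots,\rho'_T$. Then $\QQ_T=\PP_\rho$ and $\QQ_0=\PP_{\rho'}$. By the triangle inequality for total variation,
\begin{align*}
\Dtv{\PP_\rho}{\PP_{\rho'}}\le \sum_{t=1}^T \Dtv{\QQ_t}{\QQ_{t-1}}\,.
\end{align*}
It therefore suffices to bound each consecutive pair by the $t$-th summand in the statement.

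Fix $t$. The laws $\QQ_t$ and $\QQ_{t-1}$ agree on the first $t-1$ coordinates: both have the $\PP_\rho$-marginal of $(X_1,\dots,X_{t-1})$. They also use the same kernels $\rho'_{t+1},\dots,\rho'_T$ after time $t$. They differ only in the step $X_t\mid X_{t-1}$, which is $\rho_t$ under $\QQ_t$ and $\rho'_t$ under $\QQ_{t-1}$.

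I would use the coupling (or variational) characterization of total variation. First, draw $(X_1,\dots,X_{t-1})$ from the common prefix law. Conditionally on $X_{t-1}$, draw $(X_t,X'_t)$ from a maximal coupling of $\rho_t(\cdot\mid X_{t-1})$ and $\rho'_t(\cdot\mid X_{t-1})$. On the event $X_t=X'_t$, continue both chains with identical randomness under the shared kernels $\rho'_{s}$ for $s>t$; on the complementary event, continue them independently. The resulting pair of trajectories couples $\QQ_t$ and $\QQ_{t-1}$, and the two trajectories differ only if $X_t\neq X'_t$. Hence
\begin{align*}
\Dtv{\QQ_t}{\QQ_{t-1}}\le \En_{X_{t-1}\sim\PP_\rho}\brk*{\Dtv{\rho_t(\cdot\mid X_{t-1})}{\rho'_t(\cdot\mid X_{t-1})}}\,.
\end{align*}
Here the expectation is over the $\PP_\rho$-marginal of $X_{t-1}$, because the prefix is generated by $\rho$; for $t=1$ the convention $X_0=\perp$ applies.

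An alternative that avoids couplings works through the data-processing inequality. Appending the same Markov kernel (the future steps $\rho'_{>t}$) to both laws cannot increase total variation. This reduces the claim to comparing the joint laws of $(X_1,\dots,X_t)$. Those two laws share a marginal on $(X_1,\dots,X_{t-1})$, and for a common marginal, the total variation of the joint laws equals the expected total variation of the conditionals, which follows directly from the integral formula for TV.

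There is essentially no real obstacle. The only delicate points are measurability of the maximal coupling and the conditional TV identity on general spaces. The data-processing route sidesteps both, so I would present that version: apply the triangle inequality, then data processing, then the conditional identity.
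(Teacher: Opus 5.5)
Your proof is correct. The paper states this lemma in its technical-tools appendix without any proof, treating it as standard, so there is no argument of the paper's to compare against. Your hybrid argument is the standard proof: you telescope over the interpolating laws $\QQ_t$, remove the shared suffix kernels by data processing, and then compare $\lambda\otimes\rho_t$ with $\lambda\otimes\rho'_t$ for the common $\PP_\rho$-marginal $\lambda$ of $X_{1:t-1}$. Taking the prefix from $\rho$ and the suffix from $\rho'$ is exactly what puts the expectations under $\PP_\rho$. The argument is equivalent to integrating the pointwise bound $\abs{\prod_t a_t-\prod_t b_t}\le\sum_t\prod_{s<t}a_s\,\abs{a_t-b_t}\prod_{s>t}b_s$ on path densities.

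Your closing remark is slightly inaccurate. The data-processing route does not sidestep the conditional step; it still uses it. However, you only need its ``$\le$'' direction, and that direction is easy. For a measurable set $A$ with sections $A_x$, write $(\lambda\otimes\rho_t)(A)-(\lambda\otimes\rho'_t)(A)=\int\lambda(dx)\,\brk{\rho_t(A_x\mid x_{t-1})-\rho'_t(A_x\mid x_{t-1})}$, bound the integrand by the TV distance, and take the supremum over $A$. This needs nothing beyond the measurability of $x_{t-1}\mapsto\Dtv{\rho_t(\cdot\mid x_{t-1})}{\rho'_t(\cdot\mid x_{t-1})}$, which the statement already presupposes.
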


\begin{lemma}\label{lem:pclip-triangle}
Suppose that $Y$ is a random variable such that $\En[Y]=0$. Then for any $B>0$, $X\in\RR$,
\begin{align*}
    \abs{X-\En\pclip{X+Y}}\leq&~\trunc[B/2]{X} + \min\crl{2B,\En\trunc[B/2]{Y}}.
\end{align*}
\end{lemma}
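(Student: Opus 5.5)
The plan is to prove the two bounds hidden in the minimum separately and then take the smaller one. Both rest on two elementary facts. First, for every $w\in\RR$ we have $\abs{w-\pclip{w}}=\trunc[B]{w}$. Second, $\trunc[b]{\cdot}$ is nondecreasing in $\abs{\cdot}$ and decreasing in $b$.

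\emph{First bound} ($\trunc[B/2]{X}+\En\trunc[B/2]{Y}$). If $\En\trunc[B/2]{Y}=\infty$ there is nothing to prove. Otherwise $Y$ is integrable, and since $\En[Y]=0$ we can write
\begin{align*}
X-\En\pclip{X+Y}=\En\brk[\big]{(X+Y)-\pclip{X+Y}}\,.
\end{align*}
This gives $\abs{X-\En\pclip{X+Y}}\le \En\trunc[B]{X+Y}$. We then use subadditivity:
\begin{align*}
\abs{X+Y}-B\le \prn[\big]{\abs{X}-B/2}+\prn[\big]{\abs{Y}-B/2}\,.
\end{align*}
Since $t\mapsto t_+$ is monotone and subadditive, this yields $\trunc[B]{X+Y}\le \trunc[B/2]{X}+\trunc[B/2]{Y}$. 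Taking expectations, with $X$ deterministic, gives the first bound.

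\emph{Second bound} ($\trunc[B/2]{X}+2B$). We insert $\pclip{X}$ and use the triangle inequality:
\begin{align*}
\abs{X-\En\pclip{X+Y}}\le \abs{X-\pclip{X}}+\abs{\pclip{X}-\En\pclip{X+Y}}\,.
\end{align*}
The first term equals $\trunc[B]{X}$, which is at most $\trunc[B/2]{X}$. The second term is at most $2B$, because both clipped quantities lie in $[-B,B]$. This route needs no integrability of $Y$.

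Combining the two bounds gives $\abs{X-\En\pclip{X+Y}}\le \trunc[B/2]{X}+\min\crl{2B,\En\trunc[B/2]{Y}}$. The only mild subtlety is that the first route needs $\En\abs{Y}<\infty$. That is exactly the case in which the first bound is informative, and otherwise the $2B$ bound applies.
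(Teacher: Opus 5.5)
Your proof is correct and follows the paper's argument essentially verbatim. The $2B$ bound comes from inserting $\pclip{X}$ and using the triangle inequality. The other bound comes from $X=\En[X+Y]$, the identity $\abs{w-\pclip{w}}=\trunc{w}$, and $\trunc{X+Y}\le\trunc[B/2]{X}+\trunc[B/2]{Y}$. Your integrability remark is harmless, since the hypothesis $\En[Y]=0$ already presupposes $\En\abs{Y}<\infty$. You also make explicit the step $\trunc{X}\le\trunc[B/2]{X}$, which the paper leaves implicit.
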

\begin{proof}
First, note that $\abs{\pclip{X}-\pclip{X+Y}}\leq 2B$, we know
\begin{align*}
    \abs{X-\En\pclip{X+Y}}\leq&~ \abs{X-\pclip{X}}+\abs{\En[\pclip{X}-\pclip{X+Y}]} \\
    \leq&~ \trunc{X} + 2B.
\end{align*}
On the other hand, we know $X=\En[X+Y]$, and hence
\begin{align*}
    \abs{X-\En\pclip{X+Y}}\leq&~ \En\abs{X+Y-\pclip{X+Y}} \\
    =&~\En\trunc{X+Y}
    \leq \trunc[B/2]{X} + \En\trunc[B/2]{Y}.
\end{align*}
Combining both inequalities 
completes the proof.
\end{proof}

\section{Proofs from \cref{sec:upper}}

\subsection{\pfref{thm:gaussian_tilt_stoc}}

\newcommand{\Wbar}{\wb{W}}

Without loss of generality we only consider the case $n=1$.
We denote by $P_x$  the joint distribution of $(r,z,g)$ under \eqref{eq:def-stoc-W}.

By~\cref{thm:fors}, the output of~\cref{alg:fors} with the specified choices samples from $\widehat\nu$, such that
\begin{align*}
    \log \wh\nu(x)-\log q(x)=\const+\En_{(r,z,g)\sim P_x}\pclip{W_{r,z,g,x}}\,.
\end{align*}
We denote $\Wbar_{r,z,x}\deq \tri{\gamp{\lr}, u - \nabla f(\gam{\lr})}$, and we know
\begin{align*}
    W_{r,z,g,x}-\Wbar_{r,z,x}=\tri{\gamp{\lr}, \nabla f(\gam{\lr}) - g}\,
\end{align*}
has mean zero under $g\sim \Ograd{\gam{\lr}}$.
On the other hand, we know
\begin{align*}
    \En_{r,z} \Wbar_{r,z,x}=
    \En_{r,z}\tri{\gamp{\lr}, u - \nabla f(\gam{\lr})}
    =&~-f(x)+\tri{u,x}+\const \\
    =&~ \log \nu(x)-\log q(x)+\const\,.
\end{align*}
Then, using \cref{lem:pclip-triangle}, we have
\begin{align*}
    &~\abs{\log\rgo(x)-\log \wh\nu(x)-\const} \\
    \leq &~ \En_{r,z}\abs*{ \Wbar_{r,z,x}-\En_{g\sim \Ograd{\gam{\lr}}}\pclip{W_{r,z,g,x}}  } \\
    \leq&~ \En_{r,z} \trunc[B/2]{\Wbar_{r,z,x}}+\En_{r,z} \min\crl*{2B,\En_{g\sim \Ograd{\gam{\lr}}} \trunc[B/2]{W_{r,z,g,x}-\Wbar_{r,z,x}}} 
    =:V(x).
\end{align*}

Then, using \cref{lem:Renyi-to-diff}, 
\begin{align}\label{eq:TV-to-diff}
\begin{aligned}
    \Dtv{\nu}{\nuhat}
    \leq&~ \En_{x\sim \nuhat}\brk{ e^{2V(x)}-1 }
    \leq e^{2B}\En_{x\sim q}\brk{ e^{2V(x)}-1 }.
\end{aligned}
\end{align}
where the second inequality uses $\frac{d\nuhat}{dq}(x)\leq e^{2B}$ for $x\in\RR^d$. Using the definition of $V$, $2e^{u+v}\leq e^{2u}+e^{2v}$ and the convexity of $w\mapsto e^w$, we also know
\begin{align*}
    2e^{2V(x)}
    \leq&~ \En_{r,z} \exp\prn*{ 2\trunc[B/2]{\tri{\gamp{\lr}, u-\nabla f(\gam{\lr})}} } \\
    &+ \En_{r,z} \exp\prn*{ 2\min\crl*{2B,\En_{g\sim \Ograd{\gam{\lr}}} \trunc[B/2]{\tri{\gamp{\lr}, \nabla f(\gam{\lr})-g}}} }.
\end{align*}
Now, note that for any fixed $\lr\in[0,1]$ under $x\sim q=\normal{\xhat,\eta\Id}$ and $z\sim P=\normal{0,\eta\Id}$, $[\gam{\lr};\gamp{\lr}]$ are jointly distributed as
\begin{align}\label{eq:ind-Gaussian}
    [\gam{\lr};\gamp{\lr}]\sim \normal{\begin{bmatrix} \xhat \\ 0 \end{bmatrix},\; \begin{bmatrix} \eta\Id &  \\ & (\pi/2)^2\eta\Id \end{bmatrix}
    }\,.
\end{align}
This implies that 
\begin{align*}
    &~2(1+e^{-2B}\Dtv{\nu}{\nuhat})
    \leq 2\En_{x\sim q} e^{2V(x)} \\
    \leq&~ \En_r \En_{x\sim q,\,z\sim \Neta} \exp\prn*{ \trunc[B/2]{\tri{\gamp{\lr}, u-\nabla f(\gam{\lr})}} } \\
    &+ \En_r \En_{x\sim q,\,z\sim \Neta} \exp\prn*{ \min\crl*{2B,\En_{g\sim \Ograd{\gam{\lr}}} \trunc[B/2]{\tri{\gamp{\lr}, \nabla f(\gam{\lr})-g}}} } \\
    =&~ \En_{x\sim q,\, Z_1\sim \normal{0,(\pi/2)^2\eta\Id}}\exp\prn*{ \trunc[B/2]{\tri{Z_1, u-\nabla f(Z)}} } \\
    &~+ \En_{x\sim q,\, Z_1\sim \normal{0,(\pi/2)^2\eta\Id}} \exp\prn*{ \min\crl*{2B,\En_{g\sim \Ograd{x}} \trunc[B/2]{\tri{Z_1, \nabla f(x)-g}}} }.
\end{align*}

Therefore, by \cref{lem:clip-tail}, there is a constant $c_1$ such that as long as $\frac{1}{\eta}\geq c_1 M^2(\log(1/\delta)+\lambda)$, it holds that for any $x\in\RR^d$ such that $\eps(M;x)\leq C\deq \frac{e^{4B}-1}{8}$,
\begin{align*}
    \En_{Z_1\sim \normal{0,(\pi/2)^2\eta\Id}}\exp\prn[\big]{2\En_{g\sim \Ograd{Y}}\trunc[B/2]{\abs{\tri{Z_1,g-\nabla f(\scedit{x})}}}}-1\leq \delta+8\eps(M)\,.
\end{align*}
This immediately implies that 
\begin{align*}
    \MoveEqLeft\En_{x\sim q,\, Z_1\sim \normal{0,(\pi/2)^2\eta\Id}} \exp\prn*{ 2\min\crl*{2B,\En_{g\sim \Ograd{x}} \trunc[B/2]{\tri{Z_1, \nabla f(x)-g}}} }-1 \\
    \leq&~ \delta+ 8\En_{x\sim q}\min\crl*{\eps(M;x), C}.
\end{align*}
By \cref{lem:holder-tail} and \cref{cor:tilt-coverage}, there is a constant $c_2$ such that as long as
\begin{align*}
    \frac{1}{\eta^{1+s}}\geq c_2 \beta_s^2\,\prn[\Big]{d^s\log(1/\delta)+\frac{s}{d^{1-s}}\log^2(1/\delta)} + c_2 \,\prn[\big]{\epsprox^2 \log(1/\delta)}^{1+s}\,,
\end{align*}
it holds that
\begin{align*}
    \En_{x\sim q,\, Z_1\sim \normal{0,(\pi/2)^2\eta\Id}}\exp\prn*{ 2\trunc[B/2]{\tri{Z_1, u-\nabla f(Z)}} }-1\leq \delta,
\end{align*}
and $\En_q[f]\leq e\En_\nu[f]+\delta$ for any bounded function $f:\RR^d\to [0,1]$. This immediately implies that 
\begin{align*}
    \En_{x\sim q}\min\crl*{\eps(M;x), C}
    \leq e\En_{x\sim \nu}\min\crl*{\eps(M;x), C}+\delta.
\end{align*}
Combining the inequalities above and rescale $\delta\leftarrow \frac{\delta}{3}$ completes the proof.
\jmlrQED

\begin{lemma}\label{lem:clip-tail}
Suppose that $C>1$ is a constant, $\sigma M\leq \frac{B}{2}$ and $\lambda\leq \min\crl{ \frac{B}{2C^2\sigma^2M^2}, \frac{1}{2CM\sigma} }$. Then, as long as $\eps(M;x)\leq C$, it holds that
\begin{align*}
    \En_{Z\sim \Nsigma}\exp\prn*{\lambda \En_{g\sim \Ograd{x}}\trunc{\abs{\tri{Z,\nabla f(x)-g}}}}-1
    \leq e^{-\frac{B^2}{8\sigma^2M^2}}+8\eps(M;x)\,.
\end{align*}
\end{lemma}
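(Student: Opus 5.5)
Write $\xi\deq g-\nabla f(x)$, so that $\tri{Z,\nabla f(x)-g}=-\tri{Z,\xi}$. The idea is to split $\xi$ into a bounded part and a tail part at level $M$:
\begin{align*}
\xi=\xi\,\indic\{\nrm{\xi}\le M\}+\xi\,\indic\{\nrm{\xi}>M\}\eqqcolon \xi_1+\xi_2\,.
\end{align*}
Let $\tau\deq\trunc{\cdot}$. Since $\tau$ is $1$-Lipschitz with $\tau(a+b)\le \tau(a)+\abs{b}$, we get
\begin{align*}
\En_g\tau(\tri{Z,\xi})\le \En_g\tau(\tri{Z,\xi_1})+\En_g\abs{\tri{Z,\xi_2}}\eqqcolon A(Z)+D(Z)\,.
\end{align*}
Then $e^{\lambda(A+D)}-1\le (e^{2\lambda A}-1)/2+(e^{2\lambda D}-1)/2$, by $e^{a+b}\le (e^{2a}+e^{2b})/2$. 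So it suffices to bound each exponential moment under $Z\sim\Nsigma$; the factor $2$ in the exponent is absorbed into the constraints on $\lambda$.

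\textbf{Bounded part.} By Jensen (convexity of $\exp$), $\exp(2\lambda A(Z))\le \En_g\exp(2\lambda\,\tau(\tri{Z,\xi_1}))$. For fixed $g$, $\tri{Z,\xi_1}\sim\normal{0,\sigma^2\nrm{\xi_1}^2}$ with $\nrm{\xi_1}\le M$. Apply \cref{lem:Gaussian-trunc} with variance $\sigma'^2=\sigma^2\nrm{\xi_1}^2\le\sigma^2M^2$. Its condition $B\ge 2\max\{2\lambda\sigma'^2,\sigma'\}$ holds because $\sigma M\le B/2$ and $\lambda\sigma^2M^2\le B/(2C^2)$ (with $C>1$); the constants may need adjusting, e.g.\ by a factor $2$. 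This gives the term $e^{-B^2/(8\sigma^2M^2)}$, since the bound is monotone in $\sigma'$.

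\textbf{Tail part.} Here $D(Z)\le \En_g[\nrm{\xi_2}]\cdot\abs{\tri{Z,v}}$ fails to factor directly, so use Jensen again: $\exp(2\lambda D(Z))\le \En_g\exp(2\lambda\abs{\tri{Z,\xi_2}})$. Averaging over $Z$ for fixed $g$ gives $\En_Z e^{2\lambda\abs{\tri{Z,\xi_2}}}\le 2e^{2\lambda^2\sigma^2\nrm{\xi_2}^2}$, which is unbounded in $\nrm{\xi_2}$. This is the main obstacle: Jensen over $g$ loses too much. Instead, I would exploit that $D(Z)=\En_g\abs{\tri{Z,\xi_2}}\le \nrm{Z}$-free bound via Cauchy–Schwarz in the direction sense: $D(Z)\le \sup_{\nrm{v}=1}\ldots$ is dimension dependent, so better is to keep $D$ linear in $Z$-marginals. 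Write $m\deq\En_g\nrm{\xi_2}\le M\eps(M;x)$ by \cref{def:eps-tail}. Then $D(Z)=m\,\En_{\tilde g}\abs{\tri{Z,\tilde\xi}}$, where $\tilde g$ is the size-biased law with density $\propto\nrm{\xi_2}$ and $\tilde\xi=\xi_2/\nrm{\xi_2}$ is a unit vector. By Jensen over $\tilde g$,
\begin{align*}
\En_Z e^{2\lambda D}\le \En_{\tilde g}\En_Z e^{2\lambda m\abs{\tri{Z,\tilde\xi}}}\le 2e^{2\lambda^2 m^2\sigma^2}-1+\ldots\,,
\end{align*}
and more precisely $\En e^{t\abs{N}}-1\le 2(e^{t^2\sigma^2/2}-1)+2t\sigma$ for $N\sim\normal{0,\sigma^2}$. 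With $t=2\lambda m\le 2\lambda CM\eps/C$ and $\lambda\le 1/(2CM\sigma)$, we have $t\sigma\le \eps(M;x)/C\le 1$. Hence $\En_Z e^{2\lambda D}-1\lesssim t\sigma\le O(\eps(M;x))$, and checking constants should give $\le 8\eps(M;x)$; here $\eps\le C$ is what keeps $t\sigma$ bounded so that the linearization is valid.

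Summing the two halves yields $e^{-B^2/(8\sigma^2M^2)}+8\eps(M;x)$. The delicate points are the size-biasing trick, which avoids the dimension, and tracking constants in the Gaussian MGF.
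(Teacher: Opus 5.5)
Your argument is correct up to absolute constants, but it is organized differently from the paper's.

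\textbf{The paper's route.} The paper never splits the noise. It makes a single global change of measure $d\bar Q/dQ=(\nrm{v}+M)/(\mmt+M)$, where $\mmt=\En\nrm{v}$ and $\alpha_v=(\mmt+M)/(\nrm{v}+M)$. It applies Jensen once to pull $\bar Q$ outside the exponential, and only then splits into $\nrm{v}\le M$ versus $\nrm{v}>M$. Every reweighted vector satisfies $\alpha_v\nrm{v}\le \mmt+M\lesssim CM$, and this is where $\eps(M;x)\le C$ is used, to bound $\mmt$. The bounded region then gets the truncated bound of \cref{lem:Gaussian-trunc}. The unbounded region gets the crude bound $\le 4$, at cost $4\bar Q(\nrm{v}\ge M)\le 8\eps(M;x)$.

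\textbf{Your route.} You split $\xi$ first and decouple the two parts by $e^{a+b}\le\frac12(e^{2a}+e^{2b})$. The bounded part needs no reweighting, since $\nrm{\xi_1}\le M$ already. You reweight only the tail, by $\nrm{\xi_2}$, which turns it into a unit direction times the small scalar $m\le M\eps(M;x)$. The linearization $\En e^{t\abs{N}}-1\lesssim t\sigma$ for $t\sigma\le 1$ then gives a tail contribution of order $\eps(M;x)/C$. This is sharper than the paper's $8\eps(M;x)$. It also uses $\eps\le C$ only to stay in the linear regime, not to control $\mmt$.

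\textbf{The price of the AM--GM step.} It doubles $\lambda$ in the bounded part, so \cref{lem:Gaussian-trunc} needs $4\lambda\sigma^2M^2\le B$. The hypothesis gives this only when $C\ge\sqrt2$; otherwise, halve the admissible range of $\lambda$. This is exactly the constant adjustment you flagged. It is of the same nature as the slack in the paper's own proof: for instance, $\eps(M;x)\le C$ actually yields $\mmt\le (C+1)M$, not $CM$.

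When you write this up, drop the false starts in the tail paragraph (the Cauchy--Schwarz remark and the incomplete display). Present the clean chain instead: size-bias, Jensen over $\tilde g$, then the one-dimensional Gaussian moment generating function.
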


\newcommand{\Qbar}{\wb{Q}}

\begin{proof}
We denote $P=\Nsigma$ and let $Q$ be the distribution of $v=g-\nabla f(x)$. 
Without loss of generality, we assume the support of $Q$ does not contain $0$, and define $\mmt=\En_{v\sim Q}\nrm{v}$. Note that we assume $\eps(M;x)\leq 1$, i.e.,
\begin{align*}
    (C-1)M\geq \En_{v\sim Q}\brk*{\nrm{v}\indic\crl*{\nrm{v}>M}}=\mmt- \En_{v\sim Q}\brk*{\nrm{v}\indic\crl*{\nrm{v}\leq M}}\geq \mmt-M,
\end{align*}
i.e., $\mmt\leq CM$.
Define $\alpha_v=\frac{\mmt+M}{\nrm{v}+M}$. 
Then we know $\En_{v\sim Q}[1/\alpha_v]=1$, and hence we can consider the distribution $\Qbar$ over $\RR^d$ such that $\frac{d\Qbar}{dQ}(g)=1/\alpha_v$. We can rewrite
\begin{align*}
    I\deq&~ \En_{Z\sim P}\exp\prn[\big]{\lambda \En_{v\sim Q}\trunc{\abs{\tri{Z, v}}}}-1 \\
    =&~\En_{Z\sim P}\exp\prn[\big]{\lambda \En_{v\sim \Qbar}[\alpha_v\trunc{\abs{\tri{Z, v}}}]}-1 \\
    \leq&~\En_{Z\sim P,v\sim \Qbar}\exp\prn[\big]{\lambda \alpha_v\trunc{\abs{\tri{Z, v}}}}-1\,.
\end{align*}
Then, using \cref{lem:Gaussian-trunc}, we can obtain the following upper bounds:

(1) When $B\geq 2\max\crl{\nrm{v}\sigma, \lambda \alpha_v^2\nrm{v}^2 \sigma^2}$, i.e., when $\nrm{v}\leq \frac{B}{2\sigma}$ \emph{and} $\mmt+M \leq \frac{\sqrt{B/(2\lambda)}}{\sigma}$, it holds that
\begin{align*}
    \En_{Z\sim P}\exp\prn*{\lambda \alpha_v\trunc{\abs{\tri{Z, v}}}}-1\leq e^{-\frac{B^2}{8\sigma^2\nrm{v}^2}}\,.
\end{align*}

(2) For any $g\neq 0$, it holds that
\begin{align*}
    \En_{Z\sim P}\exp\prn*{\lambda \alpha_v\trunc{\abs{\tri{Z, v}}}}-1\leq 2e^{\frac12\lambda^2\alpha_v^2\nrm{v}^2\sigma^2}\leq 2e^{\frac{1}{2}\lambda^2(\mmt+M)^2\sigma^2}\leq 4\,,
\end{align*}
where we use the condition $\lambda\leq \frac{1}{2CM\sigma}$ and the fact that $\mmt\leq CM$.

Now, we note that we assume $M\leq \frac{B}{2\sigma}$ \emph{and} $CM \leq \frac{\sqrt{B/(2\lambda)}}{\sigma}$. Then we can upper bound
\begin{align*}
    I\leq&~ \En_{v\sim \Qbar}\brk*{ \indic\crl{\nrm{v}\leq M}\prn*{ \En_{Z\sim P}\exp\prn*{\lambda \alpha_v\trunc{\abs{\tri{Z, v}}}}-1 }} \\
    &~+\En_{v\sim \Qbar}\brk*{ \indic\crl{\nrm{v}> M}\prn*{ \En_{Z\sim P}\exp\prn*{\lambda \alpha_v\trunc{\abs{\tri{Z, v}}}}-1 }} \\
    \leq&~ e^{-\frac{B^2}{8\sigma^2M^2}}+4\PP_{v\sim \Qbar}\prn*{\nrm{v}\geq M}\,.
\end{align*}
Finally, using $\frac{d\Qbar}{dQ}(g)=\frac{\nrm{v}+M}{\mmt+M}$, we have 
\begin{align*}
    \PP_{v\sim \Qbar}\prn*{\nrm{v}\geq M}
    \leq \frac{2}{M} \En_{v\sim Q}\brk*{\nrm{v}\indic\crl*{\nrm{v}\geq M}}\,.
\end{align*}
Combining these inequalities gives the desired upper bound.
\end{proof}

\begin{lemma}\label{lem:holder-tail}
Suppose that $Z_0\sim \Neta$, $Z_1\sim \Nsigma$, and $Y=\xhat+Z_0$. Suppose \cref{ass:holder} holds and $\nrm{\nabla f(\xhat)-u}\leq \epsprox$. Then it holds that 
\begin{align*}
    \En\exp\prn*{\lambda \trunc{\abs{\tri{Z_1,\nabla f(Y)-u}}}}-1
    \leq 2\exp\prn[\Big]{-\frac{B}{12}\min\crl[\Big]{ \sqrt{\frac{d^{1-s}}{s\eta^s\sigma^2\beta_s^2}},\, \frac{B}{\sigma^2\epsprox^2 + \sigma^2\beta_s^2(\eta d)^s} }}\,, \\
    \text{for}~0\leq \lambda\leq \frac16\min\crl[\Big]{ \sqrt{\frac{d^{1-s}}{s\eta^s\sigma^2\beta_s^2}},\, \frac{B}{\sigma^2\epsprox^2+\sigma^2\beta_s^2(\eta d)^s} }\,.
\end{align*}
\end{lemma}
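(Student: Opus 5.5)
The plan is to reduce the truncated exponential moment to a sub-Gaussian/sub-exponential tail bound for the scalar $\tri{Z_1,\nabla f(Y)-u}$, conditioning on $Z_0$. Write $V\deq \nabla f(Y)-u$ and split
\begin{align*}
    \nrm{V}\leq \nrm{\nabla f(\xhat)-u}+\nrm{\nabla f(\xhat+Z_0)-\nabla f(\xhat)}\leq \epsprox+\beta_s\nrm{Z_0}^s
\end{align*}
by \cref{ass:holder}. Conditionally on $Z_0$, $\tri{Z_1,V}\sim\normal{0,\sigma^2\nrm{V}^2}$ is one-dimensional Gaussian, so \cref{lem:Gaussian-trunc} applies with variance $\sigma^2\nrm{V}^2$.

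\textbf{Key steps.} The argument splits according to whether the conditional variance is small.
\begin{enumerate}
    \item On the event $\sigma^2\nrm{V}^2\le B/(4\lambda)$ (and $\sigma\nrm{V}\le B/2$), the second bound of \cref{lem:Gaussian-trunc} gives $\En[e^{\lambda\trunc{\cdot}}-1\mid Z_0]\le e^{-B^2/(8\sigma^2\nrm{V}^2)}$.
    \item In general, the first bound gives $\le 2e^{\lambda^2\sigma^2\nrm{V}^2/2}$.
\end{enumerate}
Rather than work with indicator events, I would use a cleaner bound valid for all $V$:
\begin{align*}
    \En\bigl[e^{\lambda\trunc{\tri{Z_1,V}}}-1\,\big|\,Z_0\bigr]\leq 2\exp\bigl(\lambda^2\sigma^2\nrm{V}^2/2-\lambda B/2\bigr),
\end{align*}
which is essentially the intermediate display $2e^{\lambda^2/2-\lambda B}\Phi(B-\lambda)$ in the proof of \cref{lem:Gaussian-trunc}, after rescaling. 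Taking expectation over $Z_0$, it remains to bound $\En\exp(\lambda^2\sigma^2\nrm{V}^2/2)$. Using $\nrm{V}^2\le 2\epsprox^2+2\beta_s^2\nrm{Z_0}^{2s}$, this is at most
\begin{align*}
    e^{\lambda^2\sigma^2\epsprox^2}\,\En\exp\bigl(\lambda^2\sigma^2\beta_s^2\nrm{Z_0}^{2s}\bigr).
\end{align*}
The second factor is controlled by \cref{lem:Gaussian-s}: provided $\lambda^2\sigma^2\beta_s^2\le d^{1-s}/(4s\eta^s)$, which is the first entry of the min, it is at most $\exp(2\lambda^2\sigma^2\beta_s^2(\eta d)^s)$. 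Altogether the bound is
\begin{align*}
    2\exp\bigl(\lambda^2 K-\lambda B/2\bigr),\qquad K\deq \sigma^2\epsprox^2+2\sigma^2\beta_s^2(\eta d)^s .
\end{align*}
Now take $\lambda\le\frac16\min\{\sqrt{\cdot},\,B/K'\}$, where $K'$ is the paper's denominator, so that $\lambda K\le B/3$ up to the factor-$2$ constant. Then $\lambda^2K-\lambda B/2\le -\lambda B/6$. To obtain the exponent stated with $\frac{B}{12}\min\{\cdots\}$, I would apply monotonicity: first prove the bound at the largest admissible $\lambda^\star=\frac16\min$, which gives $-\lambda^\star B/6=-\frac{B}{36}\min$, and tune constants to match. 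Since the left side is increasing in $\lambda$, the bound at $\lambda^\star$ covers all smaller $\lambda$. The constants $12$ versus $36$ require rebalancing: I would pick the exponent split more carefully, e.g.\ by using $e^{\lambda(\abs{Z}-B)_+}\le e^{-\lambda' B}e^{\lambda'\abs{Z}}$-type bounds at a larger auxiliary $\lambda'=2\lambda$.

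\textbf{Main obstacle.} The delicate point is the constant bookkeeping, and more substantively the fact that the stated decay is in terms of $\min$ evaluated at the maximal $\lambda$ rather than at the given $\lambda$. This forces the monotonicity trick just described. A second subtlety is that the bound must remain valid when $s=0$, where the $\sqrt{\cdot}$ entry is $+\infty$. In that case \cref{lem:Gaussian-s} is trivial, since $\nrm{Z_0}^0=1$, and the argument goes through directly with $\nrm{V}\le\epsprox+\beta_0$.
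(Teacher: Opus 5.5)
Your route is the paper's: condition on $Z_0$, use the Gaussian moment generating function of $\langle Z_1,V\rangle$, split $\|V\|^2\le 2\epsprox^2+2\beta_s^2\|Z_0\|^{2s}$, apply \cref{lem:Gaussian-s}, and finish by monotonicity of the left-hand side in $\lambda$. The only problem is the constant bookkeeping you flagged, and as you set it up it does not close.

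Your conditional bound $2\exp(\lambda^2\sigma^2\|V\|^2/2-\lambda B/2)$ discards a factor of $2$ unnecessarily. Rescaling $Z=\sigma\|V\|Z'$ turns $\lambda(|Z|-B)_+$ into $\lambda\sigma\|V\|\,(|Z'|-B/(\sigma\|V\|))_+$. The display in the proof of \cref{lem:Gaussian-trunc} therefore gives exponent $\lambda^2\sigma^2\|V\|^2/2-\lambda B$, with the full $\lambda B$.

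With only $-\lambda B/2$, no choice of evaluation point reaches $B/12$. Write $m$ for the minimum in the statement and $K'=\sigma^2\epsprox^2+\sigma^2\beta_s^2(\eta d)^s$, and use $K\le 2K'$ as you do. Then $\lambda=cm$ gives exponent at most $-cmB(\tfrac12-2c)$, whose best value over $c$ is $-mB/32$.

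Your suggested repair $e^{\lambda(|Z|-B)_+}\le e^{-\lambda' B}e^{\lambda'|Z|}$ is false when $|Z|<B$, since the left side then equals $1$. The valid inequality is $e^{\lambda(w-B)_+}-1\le e^{\lambda(w-B)}$. This is exactly the paper's first step, followed by $\mathbb{E}e^{\lambda|G|}\le 2e^{\lambda^2\tau^2/2}$ for $G\sim\mathsf{N}(0,\tau^2)$.

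The fix is short:
\begin{itemize}
\item Keep the full $-\lambda B$, so the bound after \cref{lem:Gaussian-s} is $2\exp(\lambda^2K-\lambda B)$.
\item Evaluate it at the larger point $\lambda^\star=\frac14 m$ rather than at $\frac16 m$; monotonicity only requires $\lambda^\star\ge\frac16 m$.
\item This $\lambda^\star$ is at most a quarter of the square-root entry, so the constraint $\lambda^2\sigma^2\beta_s^2\le d^{1-s}/(4s\eta^s)$ of \cref{lem:Gaussian-s} holds.
\item Also $\lambda^\star K\le \frac{m}{4}\cdot 2K'\le \frac{B}{2}$, so the exponent is at most $-\lambda^\star B/2=-\frac{B}{8}m\le-\frac{B}{12}m$.
\end{itemize}
The paper does exactly this with $\lambda_\star=\min\{\sqrt{d^{1-s}/(4s\eta^s\sigma^2\beta_s^2)},\,B/(2\sigma^2\epsprox^2+4\sigma^2\beta_s^2(\eta d)^s)\}$, which is at least $\frac14 m$. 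Your handling of the case $s=0$ is fine.
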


\begin{proof}
We write 
\begin{align*}
    M_\lambda\ldef&~ \En\exp\prn*{\lambda \trunc{\abs{\tri{Z_1,\nabla f(Y)-u}}}}-1 \\
    \leq&~ \En\exp\prn*{\lambda \prn{\abs{\tri{Z_1,\nabla f(Y)-u}}-B}} \\
    \leq&~ 2e^{-\lambda B}\En\exp\prn[\Big]{\frac12\lambda^2\sigma^2\nrm{\nabla f(Y)-u}^2} \\
    \leq&~2e^{-\lambda B+\lambda^2\sigma^2\nrm{\nabla f(\xhat)-u}^2}\En\exp\prn*{\lambda^2\sigma^2\nrm{\nabla f(Y)-\scedit{\nabla}f(\xhat)}^2}\,.
\end{align*}
Using \cref{ass:holder} and \cref{lem:Gaussian-s}, it holds that as long as $\lambda^2\sigma^2\beta_s^2\leq \frac{d^{1-s}}{4s\eta^s}$, 
\begin{align*}
    \En\exp\prn*{\lambda^2\sigma^2\nrm{\nabla f(Y)-\nabla f(\xhat)}^2}
    \leq&~ \En\exp\prn*{\lambda^2\sigma^2\beta_s^2\nrm{Y-\xhat}^{2s}} 
    \leq \exp\prn*{2\lambda^2\sigma^2\beta_s^2(\eta d)^s}\,.
\end{align*}
Therefore, we have shown
\begin{align*}
    M_\lambda\leq 2\exp\prn*{ \lambda^2\sigma^2\epsprox^2 + 2\lambda^2\sigma^2\beta_s^2(\eta d)^s - \lambda B}\,, \qquad \text{for}~\lambda\leq \sqrt{\frac{d^{1-s}}{4s\eta^s\sigma^2\beta_s^2}}\,.
\end{align*}
Note that $\lambda \mapsto M_\lambda$ is an increasing function, and hence %
we can choose
\begin{align*}
    \lambda_\star=\min\crl[\Big]{ \sqrt{\frac{d^{1-s}}{4s\eta^s\sigma^2\beta_s^2}},\, \frac{B}{2\sigma^2\epsprox^2 +4\sigma^2\beta_s^2(\eta d)^s} }\,,
\end{align*}
so that for any $\lambda\leq \lambda_\star$, it holds that
\begin{align*}
    M_\lambda\leq M_{\lambda_\star}
    &\leq 2\exp\prn[\Big]{-\frac{B}{12}\min\crl[\Big]{ \sqrt{\frac{d^{1-s}}{s\eta^s\sigma^2\beta_s^2}},\, \frac{B}{\sigma^2\epsprox^2+\sigma^2\beta_s^2(\eta d)^s} }}\,.
\end{align*}
\end{proof}

\begin{corollary}\label{cor:tilt-coverage}
There is an absolute constant $c>0$ such that the following holds. Suppose $\nrm{u-\nabla f(\xhat)}\leq \epsprox$.

For any $\delta\in(0,\frac12]$, as long as $\frac{1}{\eta}\geq c\epsprox^2\log(1/\delta)$ and $\frac{1}{\eta^{1+s}}\geq c\beta_s^2 d^s\log(1/\delta)$, for any $f:\RR^d\to [0,1]$ it holds that
\begin{align*}
    \En_q[f]\leq e\En_\nu[f]+\delta.
\end{align*}
\end{corollary}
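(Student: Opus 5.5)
Since the target density is bounded by $f\le 1$, I would reduce the statement to a bound on a Rényi divergence of $q$ from $\nu$ and then use \cref{lem:Renyi-to-cov}. Taking its multiplier to be $M=e$ gives, for any $\lambda>1$,
\begin{align*}
    \En_q[f]-e\En_\nu[f]\le e^{-(\lambda-1)}\,\prn[\big]{1+\Dren{q}{\nu}}\,.
\end{align*}
So it suffices to show that $1+\Dren{q}{\nu}\le e^{c'}$ for some $\lambda\asymp\log(1/\delta)$ and an absolute constant $c'$, after which the right-hand side is at most $\delta$. Writing $q=\mu_{g}$ with $g(x)=\frac{1}{2\eta}\nrm{x-\xhat}^2$, and $\nu=\mu_h$ with $h(x)=f(x)+\frac{1}{2\eta}\nrm{x-\xz}^2$, the difference is $h-g=f(x)-\tri{u,x}+\const$, where $u=(\xz-\xhat)/\eta$. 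By \cref{lem:Renyi-to-diff} with $\mu_f\leftarrow q$,
\begin{align*}
    1+\Dren{q}{\nu}\le \En_{x\sim q}\, e^{2\lambda\abs{\phi(x)}}\,,\qquad \phi(x)\deq f(x)-f(\xhat)-\tri{u,x-\xhat}\,.
\end{align*}
Here I choose the free constant in $h-g$ so that $\phi(\xhat)=0$.

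Next I would bound $\phi$ pointwise. Set $x=\xhat+Z_0$ with $Z_0\sim\Neta$. By the fundamental theorem of calculus together with \cref{ass:holder},
\begin{align*}
    \abs{\phi(x)}\le \abs{\tri{\nabla f(\xhat)-u,Z_0}}+\beta_s\nrm{Z_0}^{1+s}\,.
\end{align*}
Using Cauchy--Schwarz on the exponential, it then suffices to control $\En e^{4\lambda\abs{\tri{\nabla f(\xhat)-u,Z_0}}}$ and $\En e^{4\lambda\beta_s\nrm{Z_0}^{1+s}}$ separately. The first is a Gaussian moment generating function and is at most $2e^{8\lambda^2\eta\epsprox^2}$. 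This is $O(1)$ once $\lambda^2\eta\epsprox^2\lesssim1$, and with $\lambda\asymp\log(1/\delta)$ that would need $\eta^{-1}\gtrsim\epsprox^2\log^2(1/\delta)$. The hypothesis only gives $\eta^{-1}\ge c\,\epsprox^2\log(1/\delta)$, so this crude route is off by one logarithm.

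The remedy is to exploit the TV-type form of the target: the $\delta$ may be absorbed through a truncation rather than a full Rényi order. Concretely, split on the event $E=\crl{\abs{\phi(x)}\le 1/2}$. Gaussian concentration shows $q(E^c)\le\delta/2$ under exactly the stated conditions. Indeed, the linear term exceeds $1/4$ with probability $2e^{-1/(32\eta\epsprox^2)}\le\delta/4$ when $\eta^{-1}\ge c\,\epsprox^2\log(1/\delta)$. For the Hölder term, $\nrm{Z_0}\le\sqrt{\eta d}+\sqrt{2\eta\log(4/\delta)}$ with probability $1-\delta/4$, so $\beta_s\nrm{Z_0}^{1+s}\lesssim\beta_s\eta^{(1+s)/2}(d+\log(1/\delta))^{(1+s)/2}$. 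This is at most $1/4$ under $\eta^{-(1+s)}\ge c\,\beta_s^2 d^s\log(1/\delta)$, at least when $\log(1/\delta)\lesssim d$. If this is not enough I would instead use \cref{lem:Gaussian-s}, as in \cref{lem:holder-tail}.

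On $E$ the ratio $q/\nu$ equals $e^{\phi(x)}\cdot Z_h/Z_g$ up to normalization. The normalizing ratio is $Z_h/Z_g=\En_q e^{-\phi}\ge e^{-1/2}(1-\delta/2)$, so on $E$ we get $q(x)\le e^{1/2}\nu(x)/\En_q e^{-\phi}\le e\,\nu(x)$ for small $\delta$. Hence $\En_q[f]\le \En_q[f\indic_E]+q(E^c)\le e\En_\nu[f]+\delta$. I expect the main obstacle to be the Hölder tail bound in the regime $\log(1/\delta)\gg d$. There the quantile argument gives $\eta^{-(1+s)}\gtrsim\beta_s^2\log^{1+s}(1/\delta)$, which is stronger than assumed. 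I would handle this by a sharper Gaussian concentration of $\nrm{Z_0}^{1+s}$, or else by noting that the theorem invoking this corollary already includes the extra $\frac{s\beta_s^2}{d^{1-s}}\log^2(1/\delta)$ term and enlarging the constant $c$ accordingly.
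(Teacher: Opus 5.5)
Your opening reduction (\cref{lem:Renyi-to-cov} with multiplier $e$, then \cref{lem:Renyi-to-diff} with $\mu_f\leftarrow q$) is exactly the paper's, and it was never off by a logarithm. You do not need $1+\Dren{q}{\nu}=O(1)$; it suffices that $1+\Dren{q}{\nu}\le 2e^{\lambda/2}$, since then $e^{1-\lambda}\cdot 2e^{\lambda/2}=2e^{1-\lambda/2}$. For the linear term this only asks $\lambda\eta\epsprox^2\lesssim 1$, which with $\lambda\asymp\log(1/\delta)$ is precisely the hypothesis. The paper takes $\ell_\star\asymp 1/(\eta(\epsprox^2+\beta_s^2(\eta d)^s))$.

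The genuine gap is the H\"older term. Anchoring $\phi(\xhat)=0$ and bounding the remainder by $\beta_s\nrm{Z_0}^{1+s}$ keeps its deterministic drift, of size $\beta_s(\eta d)^{(1+s)/2}$. The hypothesis only guarantees $\beta_s(\eta d)^{(1+s)/2}\le\sqrt{d/(c\log(1/\delta))}$. So your claim that this is at most $1/4$ ``when $\log(1/\delta)\lesssim d$'' is backwards: the step fails exactly in that regime. For example, take $f(x)=\frac{\beta}{2}\nrm{x}^2$, $\xhat$ the exact proximal point, and $\eta\beta=(cd\log(1/\delta))^{-1/2}$. Then $\phi=\frac{\beta}{2}\nrm{Z_0}^2\approx\frac12\sqrt{d/(c\log(1/\delta))}\gg 1$ when $d\gg c\log(1/\delta)$. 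Hence $q(E)\approx 0$ and $\En_q e^{2\lambda\abs{\phi}}$ is enormous, even though the conclusion is true there. Only the fluctuation of $\phi$ is small, of order $\sqrt{\eta\log(1/\delta)}\,(\epsprox+\beta_s(\eta d)^{s/2})$. Its mean must be absorbed into the normalizing constant, and your pointwise bound cannot see the resulting $\sqrt d$ gain. The paper gets it through the path $\gamma_{z,r}$. By Jensen, $\En_q e^{2\ell\abs{\phi-\const}}\le\En\exp(2\ell\abs{\tri{Z_1,u-\nabla f(Y)}})$, where $Y\sim q$ and $Z_1\sim\normal{0,(\pi/2)^2\eta\Id}$ are independent by \cref{eq:ind-Gaussian}. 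The argument of \cref{lem:holder-tail} then bounds this by $2\exp(100\ell^2\eta(\epsprox^2+\beta_s^2(\eta d)^s))$. Gaussian concentration of the locally Lipschitz map $Z_0\mapsto\phi(\xhat+Z_0)$ about its mean would be an alternative fix.

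Two further problems remain. First, with $h-g=\phi$ one has $q/\nu=e^{\phi}\,\En_q[e^{-\phi}]$, so on $E$ you need the upper bound $\En_q e^{-\phi}\le e^{1/2}$, not the lower bound you wrote. That is an exponential moment which also sees $E^c$, so the truncation does not remove the need for exponential moments.

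Second, in the regime $\log(1/\delta)\gg d$ no sharper concentration can help, because the corollary as stated is false there for $s>0$. Take $d=1$, $s=1$, $f(x)=\frac{\beta}{2}x^2$, $\xhat$ the exact proximal point (so $\epsprox=0$), and $\eta\beta=(c\log(1/\delta))^{-1/2}$. Then $q=\normal{\xhat,\eta}$ and $\nu=\normal{\xhat,\eta/(1+\eta\beta)}$. Let $A=\crl{\abs{x-\xhat}>t\sqrt{\eta}}$ with $t^2=6/(\eta\beta)$. Log-concavity of the Gaussian tail together with the Mills-ratio bound gives $\nu(A)\le e^{-2}q(A)$. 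Hence $\En_q[\indic_A]-e\,\En_\nu[\indic_A]\ge(1-e^{-1})\,q(A)=e^{-O(\sqrt{c\log(1/\delta)})}\gg\delta$ for small $\delta$. The paper's own proof needs the same extra hypothesis. It is what \cref{lem:Gaussian-s} requires at order $\ell\asymp\log(1/\delta)$, namely $\eta^{-(1+s)}\gtrsim s\beta_s^2\log^2(1/\delta)/d^{1-s}$, which is the additional term in \cref{thm:gaussian_tilt_stoc}. So your second remedy, adding that condition, is the correct one; your first remedy cannot work.
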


\begin{proof}
Recall that (in the proof of \cref{thm:gaussian_tilt_stoc})
we denote $\Wbar_{r,z,x}\deq \tri{\gamp{\lr}, u - \nabla f(\gam{\lr})}$ and
\begin{align*}
    \log \nu(x)-\log q(x)+\const=\En_{r,z} \Wbar_{r,z,x}.
\end{align*}
By \cref{lem:Renyi-to-diff}, it holds that for any $\ell\geq 1$, 
\begin{align*}
    \max\crl*{\Dren[\ell]{\nu}{q}, \Dren[\ell]{q}{\nu}}+1\leq&~ \En_{x\sim q}\En_{r,z} e^{2\ell\abs{\Wbar_{r,z,x}}} \\
    =&~\En_r \En_{x\sim q,\,z\sim \Neta} \exp\prn*{ 2\ell\abs{\tri{\gamp{\lr}, u-\nabla f(\gam{\lr})}} } \\
    =&~ \En_{x'\sim q,\,z'\sim \normal{0,(\pi/2)^2\eta\Id}} \exp\prn*{ 2\ell\abs*{\tri{z', u-\nabla f(x')}} },
\end{align*}
where the last line uses \cref{eq:ind-Gaussian}. Then, from our proof of \cref{lem:holder-tail}, we know that as long as $50\ell^2\eta\beta_s^2\leq \frac{d^{1-s}}{4s\eta^s}$, \sccomment{This should lead to one more condition on $\eta$ in the statement}
\begin{align*}
    \max\crl*{\Dren[\ell]{\nu}{q}, \Dren[\ell]{q}{\nu}}+1\leq&~ 
    \En_{x'\sim q,\,z'\sim \normal{0,(\pi/2)^2\eta\Id}} \exp\prn*{ 2\ell\abs*{\tri{z', u-\nabla f(x')}} } \\
    \leq&~ 2\exp\prn*{100\ell^2\eta\nrm{\nabla f(\xhat)-u}^2+100\ell^2\eta\beta_s^2(\eta d)^s}\,.
\end{align*}

Finally, by \cref{lem:Renyi-to-cov}, it holds that for any $f:\RR^d\to [0,1]$,
\begin{align*}
    \En_q[f]-e\En_{\nu}[f]\leq \inf_{\ell\geq 1} e^{1-\ell}(1+\Dren[\ell]{q}{\nu}).
\end{align*}
Then, we set $\ell_{\star}=\frac{1}{200\eta(\epsprox^2+\beta_s^2(\eta d)^s)}$. As long as $\ell_\star\geq 1$, we have $\En_q[f]-e\En_{\nu}[f]\leq e^{1-\frac{1}{2}\ell_\star}$. This is the desired result.
\end{proof}

\begin{corollary}\label{cor:chi-square}
There is an absolute constant $c>0$ such that the following holds. Suppose $\nrm{u-\nabla f(\xhat)}\leq \epsprox$.
As long as $\frac{1}{\eta}\geq c\epsprox^2$ and $\frac{1}{\eta^{1+s}}\geq c\beta_s^2 d^s$, it holds that $1+\Dchis{\nu}{q}\leq e^{c(\eta\epsprox^2+\eta^{1+s}\beta_s^2d^s)}$. 
\end{corollary}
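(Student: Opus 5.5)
The plan is to bound $1+\Dchis{\nu}{q}$ by reusing the machinery of \cref{cor:tilt-coverage}. Note that $\Dchis{\nu}{q}=\Dren[2]{\nu}{q}$ in the paper's notation, since $\En_\nu(\nu/q)-1$ is exactly the $\chi^2$-divergence. So it suffices to control the Rényi quantity of order $\ell=2$.

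First, write $\log\nu(x)-\log q(x)+\const=\En_{r,z}\Wbar_{r,z,x}$ as in the proof of \cref{thm:gaussian_tilt_stoc}. Apply \cref{lem:Renyi-to-diff} with $\mu_f=q$ and $\mu_g=\nu$, taking $f-g$ to be the log-ratio (after shifting by the constant, which does not change the normalized measures). This gives
\begin{align*}
1+\Dren[2]{\nu}{q}\le \En_{x\sim q}\exp\prn[\big]{4\abs{\En_{r,z}\Wbar_{r,z,x}}}\le \En_{x\sim q}\En_{r,z}e^{4\abs{\Wbar_{r,z,x}}}\,,
\end{align*}
where the second step uses Jensen. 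By the Gaussian decoupling \eqref{eq:ind-Gaussian}, the right-hand side equals $\En_{x'\sim q,\,z'\sim\normal{0,(\pi/2)^2\eta\Id}}\exp\prn{4\abs{\tri{z',u-\nabla f(x')}}}$.

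Next, bound this moment generating function as in \cref{lem:holder-tail}, but without truncation. Integrating out $z'$ gives $\En e^{\lambda\abs{\tri{z',v}}}\le 2e^{\lambda^2\sigma^2\nrm{v}^2/2}$ with $\sigma^2=(\pi/2)^2\eta$. Then split $\nrm{u-\nabla f(x')}^2\le 2\epsprox^2+2\beta_s^2\nrm{x'-\xhat}^{2s}$ and apply \cref{lem:Gaussian-s} to the second term. This requires $c'\eta\beta_s^2\le d^{1-s}/(4s\eta^s)$, i.e.\ $\eta^{1+s}\beta_s^2\lesssim d^{1-s}/s$. The result is
\begin{align*}
1+\Dchis{\nu}{q}\le 2\exp\prn[\big]{c(\eta\epsprox^2+\eta^{1+s}\beta_s^2d^s)}\,.
\end{align*}

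The main obstacle is the prefactor $2$, which must be removed to obtain the claimed form $e^{c(\cdots)}$; a bound of the form $2e^{c(\cdots)}$ is useless when the exponent is small. The fix is to avoid the crude bound $\En e^{\lambda\abs{Y}}\le 2\En e^{\lambda Y}$. Since $\tri{z',v}$ is symmetric, I would use $\En e^{\lambda\abs{\tri{z',v}}}\le e^{\lambda^2\sigma^2\nrm v^2/2}\bigl(1+2\Phi\text{-tail}\bigr)$, which for a mean-zero Gaussian is at most $\exp(c\lambda\sigma\nrm{v}+\lambda^2\sigma^2\nrm v^2)$. Alternatively, I would avoid absolute values altogether by using the exact identity from the proof of \cref{lem:Renyi-to-diff}:
\begin{align*}
1+\Dren[2]{\nu}{q}=\En_q[e^{h}]\,\En_q[e^{-h}]\,,\qquad h=\log\nu-\log q+\const\,.
\end{align*}
Each factor is then controlled by a signed Gaussian MGF, which carries no factor $2$. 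The linear term $c\lambda\sigma\nrm{v}$ from the first option is of order $\sqrt{\eta}\,\epsprox$, which is not obviously dominated by $\eta\epsprox^2$ when that quantity is small, so the signed route is the cleaner one.

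Concretely, by Jensen over $(r,z)$, $\En_q e^{\pm h}\le \En\exp(\pm\tri{z',u-\nabla f(x')})\le\En_{x'}\exp(\sigma^2\nrm{u-\nabla f(x')}^2/2)$. The product of the two factors is then at most $\exp(c(\eta\epsprox^2+\eta^{1+s}\beta_s^2d^s))$, using \cref{lem:Gaussian-s} as before. The conditions $1/\eta\ge c\epsprox^2$ and $1/\eta^{1+s}\ge c\beta_s^2d^s$ ensure that the hypothesis of \cref{lem:Gaussian-s} holds (for $s>0$ it follows since $d^s\ge s/d^{1-s}$ up to constants) and that the exponents stay bounded.
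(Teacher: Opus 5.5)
\textbf{Gap: the key identity bounds the reverse divergence.} Your signed route relies on $1+\Dren[2]{\nu}{q}=\En_q[e^{h}]\,\En_q[e^{-h}]$, and this identity has the direction reversed. Since $\nu/q=e^{h}/\En_q[e^{h}]$, you get $\En_q[e^{h}]\,\En_q[e^{-h}]=\En_q[q/\nu]=1+\Dchis{q}{\nu}$. The formula in the proof of \cref{lem:Renyi-to-diff} with $\ell=2$ takes expectations under the \emph{first} argument, so it reads $1+\Dchis{\nu}{q}=\En_\nu[e^{h}]\,\En_\nu[e^{-h}]$.

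The two directions are not interchangeable. For $q=\normal{0,1}$ and $\nu=\normal{0,2}$, $\Dchis{q}{\nu}$ is finite while $\Dchis{\nu}{q}=\infty$. The direction also matters downstream: \cref{lem:proximal-move} needs the exact RGO in the first slot and the Gaussian in the second, in order to transfer probabilities from the Gaussian chain to the true chain. As written, your final chain bounds the wrong divergence.

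\textbf{The fix, which is the paper's route.} The correct expression under $q$ is $1+\Dchis{\nu}{q}=\En_q[e^{2h}]/(\En_q[e^{h}])^2$. What is missing is a lower bound on the normalizing factor in the denominator. The paper takes $h=V(x)\deq\En_{r,z}\tri{\dot\gamma_{z,r}(x),u-\nabla f(\gamma_{z,r}(x))}$ and notes that $\En_q[V]=0$ by \eqref{eq:ind-Gaussian}, because the velocity is centered and independent of the position. Jensen then gives $\En_q[e^{V}]\ge 1$, hence $1+\Dchis{\nu}{q}\le\En_q[e^{2V}]$. Alternatively, $(\En_q[e^{h}])^{-2}\le\En_q[e^{-2h}]$ also works.

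From there your steps go through with the exponent doubled. Jensen over $(r,z)$, the decoupling, and the signed Gaussian MGF give $\En_{x'\sim q}\exp\bigl(\tfrac{\pi^2}{2}\eta\nrm{u-\nabla f(x')}^2\bigr)$. The split into $2\epsprox^2+2\beta_s^2\nrm{x'-\xhat}^{2s}$ together with \cref{lem:Gaussian-s} then finishes as you describe.

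Your diagnosis that the absolute-value route through \cref{lem:Renyi-to-diff} cannot yield the form $e^{c(\cdots)}$ is correct. The paper likewise works with signed exponents throughout.
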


\begin{proof}
It is straightforward to verify that for $V(x)\deq \En_{r,z} \Wbar_{r,z,x}$, it holds that
\begin{align*}
    1+\Dchis{\nu}{q}=\frac{\En_{x\sim q} e^{2V(x)}}{(\En_{x\sim q} e^{V(x)})^2}.
\end{align*}
Note that $\En_{x\sim q}[V(x)]=0$ by \cref{eq:ind-Gaussian}. Hence,
\begin{align*}
    1+\Dchis{\nu}{q}=&~\En_{x\sim q} e^{2V(x)}
    \leq \En_{x\sim q}\En_{r,z} e^{2\Wbar_{r,z,x}} \\
    =&~\En_r \En_{x\sim q,\,z\sim \Neta} \exp\prn*{ 2\tri{\gamp{\lr}, u-\nabla f(\gam{\lr})}}  \\
    =&~ \En_{x'\sim q,\,z'\sim \normal{0,(\pi/2)^2\eta\Id}} \exp\prn*{ 2 \tri{z', u-\nabla f(x')}} \\
    =&~ \En_{x'\sim q} \exp\prn[\Big]{ \frac{\pi^2}{2}\eta\, \nrm{ u-\nabla f(x')}^2}\,.
\end{align*}
The remaining proof is the same.
\end{proof}

\subsection{\pfref{thm:gaussian_tilt_stoc_0th}}

Without loss of generality we only consider the case $n=1$.
We denote by $P_x$ the joint distribution of $(z,v,v')$ under \eqref{eq:def-stoc-W-0th}. 

By~\cref{thm:fors}, the output of~\cref{alg:fors} with the specified choices samples from $\widehat\nu$, such that
\begin{align*}
   \log \wh\nu(x)-\log q(x)=\const+\En_{(z,v,v')\sim P_x}\pclip{W_{z,v,v',x}}\,.
\end{align*}
On the other hand, we know
\begin{align*}
   \En_{(z,v,v')\sim P_x}[W_{z,v,v',x}]
   =&~-f(x)+\tri{u,x}+\const \\
   =&~ \log \nu(x)-\log q(x)+\const\,.
\end{align*}
Then, using \cref{lem:pclip-triangle}, $\abs{\log\rgo(x)-\log \wh\nu(x)-\const}\leq V(x)$, where 
\begin{align*}
   V(x)
   \deq &~\En_{(z,v,v')\sim P_x}\trunc{W_{z,v,v',x}} \\
   \leq&~ \En_{z\sim q}\min\crl*{2B, \En_{v\sim \Oeval{x},\, v'\sim \Oeval{z}} \trunc[B/2]{v-f(x)+f(z)-v'}} \\
   &~+\En_{z\sim q} \trunc[B/2]{f(z)-f(x)-\tri{u,z-x}}\\
   \leq&~ \En_{z\sim q}\min\crl*{2B, \eps(B/4;x)+\eps(B/4;z)}
   +\En_{z\sim q} \trunc[B/2]{f(z)-f(x)-\tri{u,z-x}}\,.
\end{align*}
In the following, we denote $\Delta_{x,z}\ldef f(x)-f(z)-\tri{u,x-z}$.
Then, using \cref{lem:Renyi-to-diff}, 
\begin{align}
\begin{aligned}
   \Dtv{\nu}{\nuhat}\leq&~ \En_{x\sim \nuhat}\brk*{ \exp\prn*{2\En_{z\sim q}\trunc[B/2]{\Delta_{x,z}}+2\min\crl*{2B, \eps(B/4;x)+\eps(B/4;z)}}-1 } \\
   \leq&~ e^{2B}\En_{x,z\sim q}\brk*{ \exp\prn*{2\trunc[B/2]{\Delta_{x,z}}+2\min\crl*{2B, \eps(B/4;x)+\eps(B/4;z)}}-1 }
\end{aligned}
\end{align}
where we use $\frac{d\nuhat}{dq}(x)\leq e^{2B}$ for $x\in\RR^d$ and the convexity of $w\mapsto e^w$.
In the following, it remains to prove the following lemma (the rest of the proof then follows from the argument of \cref{thm:gaussian_tilt_stoc}). 
\jmlrQED
\begin{lemma}
Let $\Delta_{x,z}\ldef f(x)-f(z)-\tri{u,x-z}$ and $q=\normal{\xhat,\eta\Id}$. 
Suppose \cref{ass:holder} holds and $\nrm{\nabla f(\xhat)-u}\leq \epsprox$. Then it holds that 
\begin{align*}
   \En_{x,z\sim q}e^{\lambda\trunc{\Delta_{x,z}}}-1
   \leq 2\exp\prn[\Big]{-\frac{B}{32}\min\crl[\Big]{ \sqrt{\frac{d^{1-s}}{s\eta^s\eta\beta_s^2}},\, \frac{B}{\eta\epsprox^2 + \eta\beta_s^2(\eta d)^s} }}\,, \\
    \text{for}~0\leq \lambda\leq \frac1{16}\min\crl[\Big]{ \sqrt{\frac{d^{1-s}}{s\eta^s\eta\beta_s^2}},\, \frac{B}{\eta\epsprox^2+\eta\beta_s^2(\eta d)^s} }\,.
\end{align*}
\end{lemma}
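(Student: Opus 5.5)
The plan is to reduce this lemma to \cref{lem:holder-tail} with the same rotation path trick used in the proof of \cref{thm:gaussian_tilt_stoc}. Since $x,z\sim q$ are i.i.d., I will write $x=\xhat+X$ and $z=\xhat+Z$ with $X,Z\sim\Neta$ independent. I then use the interpolation
\begin{align*}
    \gamma_r \deq \xhat+\alr X+\blr Z\,,\qquad \alr=\sin(\pi r/2)\,,~\blr=\cos(\pi r/2)\,,
\end{align*}
which satisfies $\gamma_1=x$ and $\gamma_0=z$. By the fundamental theorem of calculus,
\begin{align*}
    \Delta_{x,z}=\int_0^1 \tri{\dot\gamma_r,\nabla f(\gamma_r)-u}\,dr=\En_{r\sim\Unif([0,1])}\tri{\dot\gamma_r,\nabla f(\gamma_r)-u}\,,
\end{align*}
where the linear term uses $\int_0^1\dot\gamma_r\,dr=x-z$. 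Exactly as in \eqref{eq:ind-Gaussian}, for each fixed $r$ the pair $(\gamma_r,\dot\gamma_r)$ is jointly Gaussian. Its components are independent, with $\gamma_r\sim\normal{\xhat,\eta\Id}$ and $\dot\gamma_r\sim\normal{0,(\pi/2)^2\eta\Id}$, because $(\alr,\blr)$ and $(\alrp,\blrp)$ are orthogonal vectors of norms $1$ and $\pi/2$.

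Next I pass $\En_r$ outside. The map $w\mapsto \trunc{w}=(\abs{w}-B)_+$ is convex, so Jensen gives $\trunc{\Delta_{x,z}}\le\En_r\trunc{\tri{\dot\gamma_r,\nabla f(\gamma_r)-u}}$. Since $\lambda\ge0$ and $\exp$ is increasing and convex, a second Jensen step yields
\begin{align*}
    \En_{x,z\sim q}e^{\lambda\trunc{\Delta_{x,z}}}-1\le \En_r\En_{x,z}\exp\prn*{\lambda\trunc{\tri{\dot\gamma_r,\nabla f(\gamma_r)-u}}}-1
    =\En\exp\prn*{\lambda\trunc{\abs{\tri{Z_1,\nabla f(Y)-u}}}}-1\,.
\end{align*}
On the right, $Y=\xhat+Z_0$ with $Z_0\sim\Neta$, and $Z_1\sim\normal{0,\sigma^2\Id}$ is independent of $Z_0$ with $\sigma^2=(\pi/2)^2\eta$. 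This is precisely the quantity bounded in \cref{lem:holder-tail}, whose hypothesis $\nrm{\nabla f(\xhat)-u}\le\epsprox$ is assumed here.

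It remains to translate constants. I will use $\eta\le\sigma^2=(\pi^2/4)\eta\le 2.5\eta$. Call a minimum \emph{$\sigma$-type} when it is formed with $\sigma^2$, as in \cref{lem:holder-tail}, and \emph{$\eta$-type} when $\sigma^2$ is replaced by $\eta$, as in the present lemma.
\begin{itemize}
    \item The first entry changes by a factor $\sqrt{\eta/\sigma^2}\ge 1/\sqrt{2.5}$, and the second by $\eta/\sigma^2\ge 1/2.5$. Hence the $\sigma$-type minimum is at least $\tfrac{1}{2.5}$ times the $\eta$-type minimum.
    \item It follows that $\tfrac1{16}$ times the $\eta$-type minimum is at most $\tfrac16$ times the $\sigma$-type minimum. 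So the range of $\lambda$ allowed here lies inside the range allowed by \cref{lem:holder-tail}.
    \item For the exponent, $\tfrac{B}{12}$ times the $\sigma$-type minimum is at least $\tfrac{B}{30}$ times the $\eta$-type minimum, which exceeds $\tfrac{B}{32}$ times it. This gives the stated bound.
\end{itemize}

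The only step needing real care is the interchange $\tau_B(\En_r\,\cdot)\le\En_r\tau_B(\cdot)$ followed by pulling $\En_r$ out of the exponential. Both rely on $\tau_B$ being convex and $\lambda\ge0$. Everything else is bookkeeping of the absolute constants, where the factor $\pi^2/4$ must fit inside the slack between $6,12$ and $16,32$.
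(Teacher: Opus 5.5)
Your proof is correct and follows the paper's argument. Both write $\Delta_{x,z}$ as an integral along the rotation path, use convexity of $\tau_B$ and of $\exp$ to pull $\En_r$ outside, and use the independence of $(\gamma_r,\dot\gamma_r)$ to reduce to \cref{lem:holder-tail} with $\sigma^2=(\pi/2)^2\eta$. Two details the paper leaves implicit are filled in by you: centering the path at $\xhat$ (using $z-\xhat$) so that $\gamma_0=z$ when $z\sim q$, and checking that the factor $\pi^2/4$ fits within the slack between the constants $6,12$ and $16,32$.
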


\begin{proof}
We can express 
\begin{align*}
   \Delta_{x,z}=f(x)-f(z)-\tri{u,x-z}=\int_{0}^1 \tri{\gamp{\lr}, \nabla f(\gam{\lr})-u}\,d\lr,
\end{align*}
where $\gam{\lr}$ and $\gamp{\lr}$ are defined in \cref{eq:gamma}. Then, using the convexity of $w\to \trunc{w}$, we can upper bound
\begin{align*}
   \En_{x,z\sim q}e^{\lambda\trunc{\Delta_{x,z}}}
   \leq \En_{r\sim \unif([0,1])}\En_{x,z\sim q}e^{\lambda\trunc{\tri{\gamp{\lr}, \nabla f(\gam{\lr})-u}}}.
\end{align*}
Now, following the proof of \cref{thm:gaussian_tilt_stoc}, we know that for any fixed $r\in[0,1]$, under $x,z\sim q$, the vector $\gam{\lr},\gamp{\lr}$ are jointly distributed as
\begin{align*}
   [\gam{\lr};\gamp{\lr}]\sim \normal{\begin{bmatrix} \xhat \\ 0 \end{bmatrix}, \begin{bmatrix} \eta\Id &  \\ & (\pi/2)^2\eta\Id \end{bmatrix}
   }\,.
\end{align*}
Therefore, we have shown that
\begin{align*}
   \En_{x,z\sim q}e^{\lambda\trunc{\Delta_{x,z}}}
   \leq \En_{Z_0\sim \Neta, Z_1\sim \normal{0,(\pi/2)^2\eta\Id}}\exp\prn*{\lambda \trunc{\abs{\tri{Z_1,\nabla f(\xhat+Z_0)-u}}}}.
\end{align*}
Applying \cref{lem:holder-tail} completes the proof.
\end{proof}

\subsection{\pfref{thm:prox_1st}}

Let $\rho$ be the transition kernel on $\RR^d$ induced by the proximal sampler, i.e., $X'\sim \rho(\cdot\mid{}X)$ is generated by $Y'\sim \normal{X, \eta\Id}$ and $X'\sim \bar\pi^{X|Y=Y'}$. Then, $\rho$ induces a Markov chain $X_0,X_1,\cdots$ by $X_0\sim \mu_0, X_{n+1}\sim \rho(\cdot\mid{}X_n)$ for $n\geq 0$. For $n\geq 0$, let $\mu_n$ be the law of $X_n$.

Similarly, we let $\wh\rho$ be the transition with $\bar\pi^{X|Y=Y'}$ implemented via \cref{thm:gaussian_tilt_stoc}, and the induced Markov chain $X_0,X_1,\cdots$ is given by $X_0\sim \mu_0, X_{n+1}\sim \wh\rho(\cdot\mid{}X_n)$ for $n\geq 0$. For $n\geq 0$, let $\wh\mu_n$ be the law of $X_n$. 

Then, by \cref{lem:TV-chain} and data-processing inequality, it holds that
\begin{align}
    \Dtv{\wh\mu_N}{\mu_N}\leq \Dtv{\PP_{\wh\rho}}{\PP_{\rho}}\leq \sum_{n=0}^{N-1}\En_{X_n\sim \mu_n}\Dtv{\wh\rho(\cdot\mid{}X_n)}{\rho(\cdot\mid{}X_n)}.
\end{align}
In all cases, we use the following error analysis. By \cref{lem:nrm_grad} and the fact that $\Dchis{\mu_n}{\mu}\leq \Dchis{\mu_0}{\mu}$, it holds that with $G=O(\beta_s^{1/(1+s)}(d+\Delta+\log(N/\delta)))$,
\begin{align*}
    \max_{n\in [N]}\PP_{X\sim \mu_n}(\nrm{\nabla f(X_n)} \ge G_\delta) \le \frac{\delta}{10N}\,.
\end{align*}
Note that as long as $\nrm{\nabla f(X_n)}\leq G$, we can implement the proximal oracle at $X_n$ with $\epsprox=10(\beta_s^{1/(1+s)}+M))$ and success probability at least $1-\eps_n(M)$ by \cref{lem:implement-prox}, using
\begin{align*}
    O(n\log(G/\beta_s^{1/(1+s)}))=O(n\log A)\quad\text{queries to}~\Ograd{\cdot}\,.
\end{align*}

Therefore, by the choice of $\eta$ \eqref{eq:eta-1st}, we can implement the RGO via \cref{thm:gaussian_tilt_stoc} so that
\begin{align*}
    \Dtv{\wh\rho(\cdot\mid{}X_n)}{\rho(\cdot\mid{}X_n)}\leq \frac{\delta}{10N}+5\eps_n(M)
\end{align*}
as long as $\nrm{\nabla f(X_n)}\leq G$, using $O(n\log A)$ queries.
Then, by a conditioning argument, we see that
\begin{align*}
    \En_{X_n\sim \mu_n}\Dtv{\wh\rho(\cdot\mid{}X_n)}{\rho(\cdot\mid{}X_n)}
    &\leq \PP_{\mu_n}(\nrm{\scedit{\nabla}f(X_n)}\geq G)+\frac{\delta}{10N}+5\eps_n(M) \\
    &\leq \frac{\delta}{5N}+5\eps_n(M)\,.
\end{align*}
Therefore, taking summation over $k=0,1,\cdots,N-1$ gives 
\begin{align}
    \Dtv{\wh\mu_N}{\mu_N}\leq \frac{\delta}{5}+5N\eps_n(M).
\end{align}
Finally, we can set $n=\phi_M(\frac{\delta}{10N})$ so that $\eps_n(M)\leq \frac{\scedit{\delta}}{10N}$. Note that this implies
\begin{align*}
    \Dtv{\wh\mu_N}{\mu}\leq \frac{3}{4}\,\delta+\Dtv{\mu_N}{\mu}\,.
\end{align*}
Now, we apply results from~\citet{Chen+22ProxSampler} to bound $N$ such that $\Dtv{\mu_N}{\mu}\leq \frac{\delta}{4}$:
\begin{itemize}
    \item \textbf{LSI case.} Here, $N \asymp \frac{1}{\alpha \eta} \log \frac{\Dkl{\mu_0}{\mu}}{\delta^2}$.
    \item \textbf{PI case.} Here, $N \asymp \frac{1}{\alpha \eta} \log \frac{\Dchis{\mu_0}{\mu}}{\delta^2}$.
    \item \textbf{LC case.} Here, $N \asymp \frac{W_2^2(\mu_0,\mu)}{\eta\delta^2}$.
\end{itemize}
Plugging in the choice of $\eta$ in \eqref{eq:eta-1st} gives the desired results.
\jmlrQED

\begin{lemma}\label{lem:nrm_grad}
Suppose that \cref{ass:holder} holds and $\nu$ is a distribution such that
\begin{align*}
    \log(1+\Dchis{\nu}{\mu_f})\leq \Delta\,.
\end{align*}
Then it holds that for $\delta\in(0,1)$,
\begin{align*}
    \PP_{X\sim \nu}\prn[\Big]{ \nrm{\nabla f(X)}^2\geq \frac{64\beta_s^{2/(1+s)}}{d^{(1-s)/(1+s)}}\,\prn*{\Delta+d+\log(1/\delta)}  }\leq \delta.
\end{align*}
\end{lemma}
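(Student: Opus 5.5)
The plan is a change of measure from $\nu$ to $\mu_f$, combined with a sub-Gaussian tail bound for $\nrm{\nabla f(X)}^{2s/(1+s)}$-type quantities under $\mu_f$. By Cauchy--Schwarz, for any event $E$,
\begin{align*}
    \PP_\nu(E)=\En_{\mu_f}\brk[\Big]{\frac{d\nu}{d\mu_f}\indic_E}\le \sqrt{1+\Dchis{\nu}{\mu_f}}\,\sqrt{\PP_{\mu_f}(E)}\le e^{\Delta/2}\sqrt{\PP_{\mu_f}(E)}\,.
\end{align*}
It therefore suffices to show that $\PP_{\mu_f}(E)\le \delta^2e^{-\Delta}$ for $E=\{\nrm{\nabla f(X)}^2\ge t\}$, where $t$ is the threshold in the statement. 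The logarithm of this target probability is $-(\Delta+2\log(1/\delta))$, which is absorbed into the factor $(\Delta+d+\log(1/\delta))$ at the cost of a constant.

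To bound the tail under $\mu_f$, I would use the exponential moment $\En_{\mu_f}\exp(\lambda\nrm{\nabla f}^2)$. The standard route is integration by parts. For smooth $f$, $\En_{\mu_f}\nrm{\nabla f}^2=\En_{\mu_f}\Delta f$, but Hölder continuity gives no second derivatives. So I would smooth first: replace $f$ by the Gaussian convolution $f_h=f*\normal{0,h\Id}$. By \cref{ass:holder}, $\nrm{\nabla f_h-\nabla f}\le \beta_s (hd)^{s/2}$ (roughly, using \cref{lem:Gaussian-s}) and $\nrm{\nabla^2 f_h}_{\mathrm{op}}\lesssim \beta_s h^{(s-1)/2}$, so $\Delta f_h\lesssim d\beta_s h^{(s-1)/2}$. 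Choosing $h$ to balance $\beta_s^2(hd)^s$ against $d\beta_s h^{(s-1)/2}$ yields a scale of order $\beta_s^{2/(1+s)}d^{-(1-s)/(1+s)}\cdot d$, matching the threshold's prefactor.

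For the exponential moment itself, I would use the Gaussian-tilt trick rather than a direct moment argument. For a unit vector-valued test, $\En_{\mu_f}\exp(\tri{\theta,\nabla f(X)})$ can be controlled via the identity $\int e^{-f(x)+\tri{\theta,\nabla f(x)}}dx$. Comparing $f(x)-\tri{\theta,\nabla f(x)}$ with $f(x-\theta)$ through Hölder continuity gives
\begin{align*}
    f(x-\theta)\le f(x)-\tri{\theta,\nabla f(x)}+\tfrac{\beta_s}{1+s}\nrm{\theta}^{1+s}\,,
\end{align*}
so that $\En_{\mu_f}e^{\tri{\theta,\nabla f}}\le e^{\beta_s\nrm\theta^{1+s}}$ by translation invariance of Lebesgue measure. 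Next, I would randomize $\theta\sim\normal{0,\tau\Id}$, which turns the left side into $\En_{\mu_f}e^{\tau\nrm{\nabla f}^2/2}$. The right side becomes $\En e^{\beta_s\nrm{\theta}^{1+s}}$, which I would bound via \cref{lem:Gaussian-s}-type estimates by $\exp(O(\beta_s(\tau d)^{(1+s)/2}))$, valid when $\tau$ is small enough. Taking $\tau\asymp d^{(1-s)/(1+s)}\beta_s^{-2/(1+s)}$, Markov's inequality gives
\begin{align*}
    \PP_{\mu_f}\big(\nrm{\nabla f}^2\ge t\big)\le \exp(O(d)-\tau t/2)\,,
\end{align*}
and with $\tau t\asymp 64(\Delta+d+\log(1/\delta))$ this is at most $\delta^2e^{-\Delta}$.

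The main obstacle is this Gaussian-randomization step. The exponent $\nrm{\theta}^{1+s}$ is superquadratic-friendly only for $s\le1$, and one must make sure $\En e^{\beta_s\nrm\theta^{1+s}}$ stays $e^{O(d)}$ at the chosen $\tau$. I expect to handle it by writing $\beta_s\nrm\theta^{1+s}\le \tfrac{\nrm\theta^2}{4\tau}+c\,\beta_s^{2/(1-s)}\tau^{(1+s)/(1-s)}$ via Young's inequality (with $s=1$ treated directly, requiring $\tau\beta_1<1/4$), and then tuning constants so that the resulting exponent is $O(d)$.
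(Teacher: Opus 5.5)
Your proposal is correct and follows essentially the same route as the paper: the translation bound $\En_{\mu_f} e^{\tri{\theta,\nabla f}}\le e^{\beta_s\nrm{\theta}^{1+s}}$, Gaussian randomization of $\theta$ controlled by \cref{lem:Gaussian-s} (or, equivalently, your Young's-inequality bound) at scale $\tau\asymp d^{(1-s)/(1+s)}\beta_s^{-2/(1+s)}$, then a Cauchy--Schwarz change of measure through the $\chi^2$ divergence and Markov's inequality. The differences are cosmetic. You apply Cauchy--Schwarz to the indicator of the event rather than to $e^{\frac{m}{4}\nrm{\nabla f}^2}$, which yields identical constants. The Gaussian-smoothing/integration-by-parts detour in your second paragraph is not needed.
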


\begin{proof}
We follow the proof of ~\citet[Lemma 6.2.7]{Chewi26Book}. For any vector $v\in\RR^d$, we bound
\begin{align*}
    f(x+v)-f(x)-\tri{v,\nabla f(x)}
    =\int_{0}^1 \tri{v, \nabla f(x+rv)-\nabla f(x)}\,dr\leq \beta_s\,\nrm{v}^{1+s}\,, \qquad\forall x\in\RR^d\,.
\end{align*}
Then, we can bound
\begin{align*}
    \int_{\RR^d} e^{-f(x+v)}\,dx\geq \int_{\RR^d} e^{-f(x)-\tri{v,\nabla f(x)}-\beta_s\nrm{v}^{1+s}}\,dx\,.
\end{align*}
Re-organizing gives
\begin{align*}
    \En_{X\sim \mu_f}[e^{\tri{v,\nabla f(X)}}]\leq e^{\beta_s\nrm{v}^{1+s}}\,, \qquad \forall v\in\RR^d\,.
\end{align*}
For any $m\geq 0$ such that $\beta_s\leq \frac{d^{(1-s)/2}}{2(1+s)m^{(1+s)/2}}$, we can take expectation over $v\sim \normal{0,m\Id}$, and then \cref{lem:Gaussian-s} gives
\begin{align*}
    \En_{X\sim \mu_f}\exp\prn[\Big]{\frac{m}{2}\nrm{\nabla f(X)}^2}\leq \exp\prn[\big]{2\beta_s(md)^{(1+s)/2}}\,.
\end{align*}
Therefore, we choose $m>0$ such that $m^{1+s}=\frac{d^{1-s}}{16\beta_s^2}$, and then
\begin{align*}
    \En_{X\sim \nu}\exp\prn*{\frac{m}{4}\nrm{\nabla f(X)}^2}
    \leq&~ \sqrt{(1+\Dchis{\nu}{\mu_f})\En_{X\sim \mu_f}\exp\prn*{\frac{m}{2}\nrm{\nabla f(X)}^2}} \\
    \leq&~ \exp\prn[\Big]{\frac{1}{2}\Delta+\beta_s(md)^{(1+s)/2}}
    \leq \exp\prn[\Big]{\frac12\prn{\Delta+d}}\,.
\end{align*}
Applying Markov's inequality gives
\begin{align*}
    \PP_{X\sim \nu}(\nrm{\nabla f(X)}\geq G)
    \leq e^{-mG^2/4}\En_{X\sim \nu}\exp\prn*{\frac{m}{4}\nrm{\nabla f(X)}^2}
    \leq \delta\,,
\end{align*}
as long as $G^2\geq \frac{4}{m}\prn*{\Delta+d+\log(1/\delta)}$. This is the desired result.
\end{proof}

\subsection{\pfref{thm:prox_0th}}

The proof is very similar to the proof of \cref{thm:prox_1st}. 

By \cref{lem:nrm_grad} and the fact that $\Dchis{\mu_n}{\mu}\leq \Dchis{\mu_0}{\mu}$, it holds that with $G>0$ chosen as
\begin{align*}
    G^2=\frac{64\beta_s^{2/(1+s)}}{d^{(1-s)/(1+s)}}\,\prn*{\Delta+d+\log(10N/\delta)}\,,
\end{align*}
it holds that
\begin{align*}
    \max_{n\in [N]}\PP_{X\sim \mu_n}(\nrm{\nabla f(X_n)} \ge G_\delta) \le \frac{\delta}{10N}\,.
\end{align*}
Note that as long as $\nrm{\nabla f(X_n)}\leq G$, we can implement the proximal oracle at $X_n$ with $\epsprox=G$ by trivially returning $x=X_n$. 
Therefore, by the choice of $\eta$ \eqref{eq:eta-0th}, we can implement the RGO via \cref{thm:gaussian_tilt_stoc_0th} so that $\Dtv{\wh\rho(\cdot\mid{}X_n)}{\rho(\cdot\mid{}X_n)}\leq \frac{\delta}{10N}+10\eps_n(M)$ as long as $\nrm{\nabla f(X_n)}\leq G$, using $O(\scedit{n})$ queries. The rest of the proof is concluded as before.
\jmlrQED

\subsection{\pfref{thm:finite-sum}}

Denote
\begin{align*}
    \nu(x\mid y)\propto_x \exp\prn[\Big]{-f(x)-\frac{1}{2\eta}\,\nrm{x-y}^2}\,.
\end{align*}
We let $\PP_\star(\cdot)$ be the probability law of $(X_0,Y_0),\dotsc,(X_N,Y_N)$ induced by the proximal sampler, and $\En_\star[\cdot]$ be the corresponding expectation.

In the following, we choose $M>0$ as
\begin{align*}
    M^2=4\beta_1^2CK\eta\,(d+\log(K/\delta))+4\beta_1^2CK^2\eta^2 \scedit{\beta_1}\,(d+\Delta+\log(K/\delta))\,,
\end{align*}
where the constant $C>0$ is from \cref{lem:proximal-move},
and $A=M+\epsprox$.

For each $i\geq 0$, we consider each time step in the epoch $\cK_i\deq [iK,(i+1)K)$. By definition of the oracle $O_{k+1}$, as long as $\nrm{x-X_{iK}}\leq \frac{M}{2\beta_1}$, it holds that $\nrm{g \scedit{-\nabla f(x)}}\leq M$ deterministically under the oracle $O_{k+1}(x)$. Therefore, by \cref{thm:gaussian_tilt_stoc}, as long as
\begin{align}\label{pfeq:finite-sum-eta}
    \frac{1}{\eta}\gg \beta_1\sqrt{d\log(1/\delta)}+(A^2+M^2+\scedit{\beta_1})\log(1/\delta)\,,
\end{align}
and $\nrm{Y_{\scedit{k}}-\Xhat_{\scedit{k+1}}-\eta \nabla f(\Xhat_{\scedit{k+1}})}\leq \eta A$, it holds that our algorithm generates a sample $X_{k+1}$ following a distribution $\nuhat_{k+1}(\cdot\mid Y_{k},X_{iK})$ satisfying
\begin{align*}
    \Dtv{\nu(\cdot\mid Y_{k})}{\nuhat_{k+1}(\cdot\mid Y_{k},X_{iK})}\leqsim \delta+\PP_{X_{k+1}\sim \nu(\cdot\mid Y_{k})}\prn[\Big]{ \nrm{X_k-X_{iK}}\geq \frac{M}{2\beta_1} }\,.
\end{align*}
Note that \cref{pfeq:finite-sum-eta} can indeed be ensured by \cref{eq:finite-sum-eta}.
Then, taking expectation over $(Y_{k},X_{iK})\sim \PP_\star$, we have 
\begin{align*}
    \En_\star \Dtv{\nu(\cdot\mid Y_{k})}{\nuhat_{k+1}(\cdot\mid Y_{k},X_{iK})} \leqsim&~ \delta+\PP_\star\prn[\Big]{ \nrm{X_k-X_{iK}}\geq \frac{M}{2\beta_1} } \\
    &~+\PP_\star\prn[\big]{ \nrm{Y_{k}-\Xhat_{k+1}-\eta \nabla f(\Xhat_{k+1})} \scedit{\ge} \eta A }\,.
\end{align*}
By definition, $\Xhat_{k+1}=\Xhat_{iK}+Y_k-Y_{iK}$, and hence
\begin{align*}
    \nrm{Y_{k}-\Xhat_{k+1}-\eta \nabla f(\Xhat_{k+1})}
    =&~ \nrm{Y_{iK}-\Xhat_{iK}-\eta \nabla f(\Xhat_{k+1})} \\
    \leq&~ \nrm{Y_{iK}-\Xhat_{iK}-\eta \nabla f(\Xhat_{iK})}
    +\beta_1 \eta\, \nrm{\Xhat_{iK}-\Xhat_{k+1}} \\
    =&~ \nrm{Y_{iK}-\Xhat_{iK}-\eta \nabla f(\Xhat_{iK})}
    +\beta_1 \eta\, \nrm{Y_k-Y_{iK}}\,.
\end{align*}
Therefore, we can bound
\begin{align*}
    &~\PP_\star\prn[\big]{ \nrm{Y_{k}-\Xhat_{k+1}-\eta \nabla f(\Xhat_{k+1})}\geq \eta A } \\
    \leq&~ \PP_\star\prn[\big]{ \nrm{Y_{iK}-\Xhat_{iK}-\eta \nabla f(\Xhat_{iK})}\geq \eta \epsprox }+\PP_\star\prn[\Big]{\nrm{Y_k-Y_{iK}}\geq \frac{M}{\beta_1}}\,.
\end{align*}
By our definition of the proximal oracle $\Oprox{\cdot}$, the first term of the RHS is bounded by $\delta$.
Combining the inequalities and taking summation over $k$ and apply \cref{lem:TV-chain}, we know
\begin{align*}
    \Dtv{\mu_N}{\muhat_N}
    \leq&~ \sum_{i=0}^{\floor{N/K}} \sum_{k\in\cK_i\cap [N]} \En_\star \Dtv{\nu(\cdot\mid Y_{k})}{\nuhat_{k+1}(\cdot\mid Y_{k},X_{iK})} \\
    \leqsim&~ N\delta+ \sum_{i=0}^{\floor{N/K}} \sum_{k\in\cK_i\cap [N]}\brk[\Big]{ \PP_\star\prn[\Big]{ \nrm{X_k-X_{iK}}\geq \frac{M}{2\beta_1} } + \PP_\star\prn[\Big]{\nrm{Y_k-Y_{iK}}\geq \frac{M}{\beta_1}}} \\
    \leqsim&~ N\delta\,,
\end{align*}
where the last inequality follows from \cref{lem:proximal-move}.
\jmlrQED

\begin{lemma}\label{lem:proximal-move}
Suppose that \cref{ass:holder} holds and $\nu$ is a distribution such that
\begin{align*}
    \log(1+\Dchis{\nu}{\mu_f})\leq \Delta\,.
\end{align*}
Consider the Markov chain $X_0\to Y_0\to \cdots\to X_K\to Y_K$ generated by the proximal sampler. Then as long as $\eta\leq \frac{1}{C\beta_1\sqrt{dK}}$, it holds that for $\delta\in(0,1)$,
\begin{align*}
    \PP\prn[\Big]{ \max_{k\in[K]}\nrm{Y_k-Y_0}\geq R  }\leq \delta\,, \qquad
    \PP\prn[\Big]{ \max_{k\in[K]}\nrm{X_k-Y_0}\geq R  }\leq \delta\,,
\end{align*}
where $R>0$ is defined as ($C>0$ is an absolute constant):
\begin{align*}
    R^2\deq CK\eta(d+\log(K/\delta))+CK^2\eta^2\beta_1(d+\Delta+\log(K/\delta))\,.
\end{align*}
\end{lemma}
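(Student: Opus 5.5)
We prove \cref{lem:proximal-move} under the natural reading that the chain is initialized at $X_0\sim\nu$, so that each $X_k$ has law $\nu_k$ with $\Dchis{\nu_k}{\mu_f}\le\Dchis{\nu}{\mu_f}$ (the proximal sampler contracts $\chi^2$ toward its stationary law $\mu_f$). The plan is to split each increment of the chain into a Gaussian martingale part and a drift part. Write $Y_k=X_k+\sqrt\eta\,\xi_k$ with $\xi_k\sim\normal{0,\Id}$ independent of the past. For the $X$-step we use the RGO representation: conditionally on $Y_k$, $X_{k+1}\sim\nu(\cdot\mid Y_k)$, which is $(\eta^{-1}-\beta_1)$-strongly log-concave once $\eta\beta_1\le1/2$. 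Its mode is $\mathrm{prox}_{\eta f}(Y_k)=Y_k-\eta\nabla f(\mathrm{prox})$, and $X_{k+1}-\mathrm{prox}_{\eta f}(Y_k)$ is sub-Gaussian with norm concentrated at scale $\sqrt{\eta d}$.

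A cleaner route is to write $X_{k+1}=Y_k-\eta\nabla f(X_{k+1})+\sqrt{\eta}\,\zeta_{k+1}$ in an appropriate sense. In Langevin form, the backward heat-flow interpretation gives
\[
X_{k+1}-Y_k=\eta\,\nabla\log(\mu_f * \mathcal N_\eta)(Y_k)+\text{(noise of size }\sqrt{\eta d})\,.
\]
Summing over $k$, we obtain
\[
Y_K-Y_0=\sum_k\bigl(\sqrt\eta\,\xi_{k+1}+(X_{k+1}-Y_k)\bigr)\,.
\]
This is a sum of $K$ noise terms plus $K$ drift terms, each of size $\eta\,\nrm{\nabla f}$ or $\eta\,\nrm{\nabla \log(\mu_f*\mathcal N_\eta)}$ at a point whose law is $\chi^2$-close to $\mu_f$ (respectively to $\mu_f*\mathcal N_\eta$).

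The noise part has to be handled as a martingale. The centered increments $(X_{k+1}-\mathbb E[X_{k+1}\mid Y_k])$ and $\sqrt\eta\,\xi_k$ are conditionally sub-Gaussian vectors. For $(X_{k+1}-\mathbb E[X_{k+1}\mid Y_k])$ this follows from the strong log-concavity of the RGO with parameter $\asymp1/\eta$, via Bakry--\'Emery/LSI and Herbst. A vector-valued martingale concentration inequality (e.g.\ Pinelis, or a union over a net combined with Freedman) then gives
\[
\max_{k\le K}\nrm{\text{martingale}}^2\lesssim K\eta\,(d+\log(K/\delta))\,,
\]
which is the first term of $R^2$.

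For the drift part, we use $\nrm{\mathbb E[X_{k+1}\mid Y_k]-Y_k}=\eta\,\nrm{\nabla\log(\mu_f*\mathcal N_\eta)(Y_k)}$ (Tweedie). Since $\mu_f*\mathcal N_\eta$ is still $\beta_1$-smooth up to constants, \cref{lem:nrm_grad} applies. The law of $Y_k$ has $\chi^2$-divergence at most $e^\Delta$ relative to $\mu_f*\mathcal N_\eta$ by data processing. So with probability $1-\delta/K$ each drift is at most $\eta\sqrt{\beta_1(d+\Delta+\log(K/\delta))}$. A union bound and the triangle inequality bound the drift sum by
\[
K\eta\sqrt{\beta_1(d+\Delta+\log(K/\delta))}\,,
\]
whose square is the second term of $R^2$.

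The bound for $\max_k\nrm{X_k-Y_0}$ follows from the one for $Y$, since $\nrm{X_k-Y_{k-1}}$ is itself one drift plus one noise step and is therefore dominated by $R$. The condition $\eta\le 1/(C\beta_1\sqrt{dK})$ serves only to ensure $\eta\beta_1\le1/2$ and to absorb cross terms. The main obstacle I expect is rigorously justifying the conditional sub-Gaussianity of the $X$-step with a dimension-free $\log(1/\delta)$ tail, i.e.\ a concentration bound of the form $\nrm{X_{k+1}-\mathbb E[X_{k+1}\mid Y_k]}\le C\sqrt{\eta(d+\log(1/\delta))}$. I would obtain this from Gaussian concentration of Lipschitz functions (here the norm) under the strongly log-concave RGO.
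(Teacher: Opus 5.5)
Your proof is correct, but it takes a different route from the paper's. The paper never decomposes the exact chain. Instead it introduces a comparison chain $Q$ in which each RGO step $\nu(\cdot\mid y)$ is replaced by $\normal{\prox_{\eta f}(y),\eta\Id}$. Under $Q$ the accumulated noise (the deviations $X_i-\prox_{\eta f}(Y_{i-1})$ and $Y_i-X_i$, $i\le k$) is exactly $\normal{0,2k\eta\Id}$, so its concentration is elementary. The drift $-\eta\sum_i\nabla f(\prox_{\eta f}(Y_{i-1}))$ is compared to $\nabla f(X_i)$ at the cost of an extra term $\eta\beta_1\sum_i\nrm{X_i-\prox_{\eta f}(Y_{i-1})}$, and then bounded by \cref{lem:nrm_grad}. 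One returns to the true chain using $1+\Dchis{P}{Q}\le\exp(c\eta^2\beta_1^2dK)$ from \cref{cor:chi-square}. This trajectory-level change of measure, together with absorbing the extra term, is what forces $\eta\le 1/(C\beta_1\sqrt{dK})$.

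Your Tweedie decomposition evaluates the drift exactly at $Y_k$ and treats the noise as a genuine martingale. So no change of measure is needed, and you only use $\eta\beta_1\le 1/2$. That is a weaker hypothesis, though \cref{thm:finite-sum} imposes a restriction of the same order anyway via $\eta^{-1}\gg M^2\log(1/\delta)$. The price is heavier machinery.

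The step you flag as the main obstacle is routine: Bakry--\'Emery plus Herbst, applied to linear functionals of the RGO sample. Two other points need tightening.
\begin{itemize}
\item Freedman's inequality is for bounded increments. Use instead the exponential supermartingale $\exp(\lambda\langle v,S_k\rangle-c\lambda^2k\eta)$ with Ville's inequality and a $1/2$-net of the sphere.
\item You assert without proof that $-\log(\mu_f*\normal{0,\eta\Id})$ is $O(\beta_1)$-smooth. This is true for $\eta\beta_1\le 1/2$: its Hessian $\eta^{-1}\Id-\eta^{-2}\mathrm{Cov}(X\mid Y=y)$ is sandwiched between $-2\beta_1\Id$ and $\beta_1\Id$ by Brascamp--Lieb and Cram\'er--Rao. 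It is simpler, however, to bypass it. Note that $\nabla\log(\mu_f*\normal{0,\eta\Id})(y)=-\En[\nabla f(X)\mid Y=y]$. Then apply conditional Jensen to $v\mapsto e^{c\nrm{v}^2}$. This transfers the exponential-moment bound in the proof of \cref{lem:nrm_grad} from $X_{k+1}$, whose law is $\chi^2$-close to $\mu_f$, to the drift at $Y_k$.
\end{itemize}
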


\begin{proof}
Denote $\nubar(\cdot\mid y)=\normal{\prox_{\eta f}(y), \eta\Id}$.
We consider the following distributions of Markov chain $X_0\to Y_0\to \cdots\to X_K\to Y_K$:

(1) $P$ is the distribution of the exact proximal sampler, i.e., $X_0\sim \nu$, and for each $k\in[K]$, $Y_k\mid X_{k}\sim \normal{X_k, \eta\Id}$ and $X_{k+1}\mid Y_k\sim \nu(\cdot\mid Y_k)$.

(2) $Q$ is the following distribution: $X_0\sim \nu$, and for each $k\in[K]$, $Y_k\mid X_{k}\sim \normal{X_k, \eta\Id}$ and $X_{k+1}\sim \nubar(\cdot\mid Y_k)$.

By \cref{cor:chi-square}, there is a constant $c_1>0$ such that as long as $\frac{1}{\eta}\geq c_1\beta_1\sqrt{d}$, it holds that
\begin{align*}
    1+\Dchis{\nu(\cdot\mid y)}{\nubar(\cdot\mid y)}\leq \exp\prn*{c\eta^2\beta_1^2d}\,, \qquad \forall y\in\RR^d\,.
\end{align*} 
Therefore, it is straightforward to verify that
\begin{align*}
    1+\Dchis{P}{Q}\leq (1+\max_{y\in\RR^d} \Dchis{\nu(\cdot\mid y)}{\nubar(\cdot\mid y)})^K\leq \exp\prn*{c\eta^2\beta_1^2dK}\leq O(1)\,.
\end{align*}
In the following, we denote $\Xbar_{k}=\prox_{\eta f}(Y_{k-1})$, $Z_k=X_{k}-\Xbar_{k-1}$, and $Z_k'=Y_k-X_k$. Then, we can express $Y_{k-1}=\Xbar_{k}+\eta\nabla f(\Xbar_{k})$, and hence $Y_k=Y_{k-1}+Z_k+Z_k'-\eta \nabla f(\Xbar_{k})$. Apply this recursively, we get
\begin{align*}
    Y_k-Y_0=\sum_{i=1}^k (Z_i+Z_i')-\eta \sum_{i=1}^k \nabla f(\Xbar_i)\,.
\end{align*}
Therefore, we can bound
\begin{align*}
    \nrm{Y_k-Y_0}\leq \nrm[\Big]{ \sum_{i=1}^k (Z_i+Z_i') }+\eta \sum_{i=1}^k \nrm{\nabla f(X_i)}+\eta \beta_1 \sum_{i=1}^k \nrm{Z_i}\,.
\end{align*}
Note that under $Q$, $\sum_{i=1}^k (Z_i+Z_i')\sim \normal{0, 2k\eta \Id}$ and hence for any $k\in[K]$, 
\begin{align*}
    Q\prn[\bigg]{ \nrm[\Big]{ \sum_{i=1}^k (Z_i+Z_i') }\geq \sqrt{2k\eta}\,(\sqrt{d}+2\sqrt{\log(1/\delta)}) }\leq \delta\,.
\end{align*}
In addition, $Q\prn[\big]{ \nrm{Z_i}\geq \sqrt{\eta}\,(\sqrt{d}+2\sqrt{\log(1/\delta)}) }\leq \delta$. Therefore, using the union bound, we get
\begin{align*}
    Q\prn[\bigg]{ \max_{k\in[K]}\nrm{Y_k-Y_0}\geq (\sqrt{2K\eta}+K\beta_1\eta\sqrt{\eta})\,(\sqrt{d}+2\sqrt{\log(2K/\delta)})+\eta \sum_{i=1}^K \nrm{\nabla f(X_i)} }\leq \delta\,.
\end{align*}
Note that $\eta\leq \frac{1}{\beta_1\sqrt{dK}}$.
Then, applying the change-of-measure argument, we get
\begin{align*}
    P\prn[\bigg]{ \max_{k\in[K]}\nrm{Y_k-Y_0}\geq C_1\sqrt{K\eta}\,(\sqrt{d}+\sqrt{\log(K/\delta)})+\eta \sum_{i=1}^K \nrm{\nabla f(X_i)} }\leq \delta\,.
\end{align*}
Finally, by \cref{lem:nrm_grad}, we can show $P(\nrm{\nabla f(X_i)} \scedit{\ge} C_2\sqrt{\beta_1(\Delta+d+\log(K/\delta))})\leq \frac{\delta}{K}$. Taking the union bound completes the proof of the first inequality. The second inequality follows similar by noting
\begin{align*}
    X_k-X_0=Z_0'+\sum_{i=1}^{k-1} (Z_i+Z_i')+Z_k-\eta \sum_{i=1}^k \nabla f(\Xbar_i)\,.
\end{align*}
\end{proof}

\section{Proofs from \cref{sec:lower}}

\subsection{\pfref{prop:Ograd-lower}}

Fix any $p\in[0,1]$ such that $p\psi(\sqrt\alpha\delta/p)+\psi(\sqrt\alpha\delta)\leq 1$. We denote $M=\frac{\sqrt\alpha\delta}{p}$.

We denote $\theta=\scedit{\delta/\sqrt\alpha}$.
Consider $\psi$-oracle $O_0$ and $O_\theta$: for any $x\in\RR$, $O_0(x)$ returns $\alpha x$ with probability 1, and $O_\theta(x)$ returns $\alpha x-M$ with probability $p$ and returns $\scedit{\alpha}x$ otherwise. Then, $O_0$ is a $\psi$-oracle for $f_0$, and $O_\theta$ is a $\psi$-oracle for $f_{\scedit{\theta}}$, because
\begin{align*}
    \En_{g\sim O_\theta(x)}\psi(\abs{g-f_{\scedit{\theta}}'(x)})=p\psi(M-\scedit{\alpha}\theta)+(1-p)\psi(\scedit{\alpha}\theta)\leq 1\,.
\end{align*}
Note that
\begin{align*}
    \Dtv{O_0(x)}{O_\theta(x)}=p\,,
\end{align*}
and hence by the sub-additivity of the TV distance (\cref{lem:TV-chain}), we have
\begin{align*}
    \Dtv{\Alg(O_0)}{\Alg(O_\theta)}\leq Tp\,.
\end{align*}
On the other hand, by our assumption, it holds that
\begin{align*}
    \Dtv{p_0}{\Alg(O_0)}\leq \frac{\delta}{10}\,, \qquad
    \Dtv{p_\theta}{\Alg(O_\theta)}\leq \frac{\delta}{10}\,.
\end{align*}
An elementary calculation also yields
\begin{align*}
    \Dtv{p_0}{p_\theta}=\Dtv{\normal{0,\scedit{\alpha^{-1}}}}{\normal{\theta,\scedit{\alpha^{-1}}}}=\Dtv{\normal{0,1}}{\normal{\delta,1}}\geq \frac{\delta}{3}\,.
\end{align*}
Therefore, by triangle inequality,
\begin{align*}
    \frac{\delta}{3}\leq \Dtv{p_0}{p_\theta}
    &\leq \Dtv{p_0}{\Alg(O_0)}+\Dtv{\Alg(O_0)}{\Alg(O_\theta)}+
    \Dtv{p_\theta}{\Alg(O_\theta)} \\
    &\leq \frac{\delta}{5}+Tp\,,
\end{align*}
and this implies $T\geq \frac{\delta}{10p}$. Taking \scedit{infimum} over $p\in(0,1]$ such that $p\psi(\sqrt\alpha\delta/p)+\psi(\sqrt\alpha\delta)\leq 1$ completes the proof.
\jmlrQED

\end{document}